        \pgfplotsset{compat=1.6}
\theoremstyle{plain}                    
\newtheorem{thm}{Theorem}[section]
\newtheorem*{kthm}{Kronecker's Approximation Theorem}
\newtheorem*{rthm}{Ratner's Theorem}
\newtheorem{thma}{Theorem}
\newtheorem{propa}[thma]{Proposition}
\newtheorem{lem}[thm]{Lemma}
\newtheorem{prop}[thm]{Proposition}
\newtheorem{cor}[thm]{Corollary}
\newtheorem*{thmnn}{Theorem}
\newtheorem*{propnn}{Proposition \ref{aiffd}}
\theoremstyle{definition}
\newtheorem{defn}[thm]{Definition}
\newtheorem{ex}[thm]{Example}
\theoremstyle{remark}
\newtheorem{rmk}[thm]{Remark}
\numberwithin{equation}{section}
\newcommand{\R}{\mathbb{R}}
\newcommand{\C}{\mathbb{C}}
\newcommand{\Z}{\mathbb{Z}}
\newcommand{\Q}{\mathbb{Q}}
\newcommand{\T}{\mathbb{T}}
\newcommand{\slc}{\mathrm{SL}(2,\C)}
\newcommand{\pslr}{\mathrm{PSL}(2,\R)}
\newcommand{\slz}{\mathrm{SL}(2,\Z)}
\newcommand{\slr}{\mathrm{SL}(2,\R)}
\newcommand{\spzz}{\mathrm{Sp}(2,\Z)}
\newcommand{\spz}{\mathrm{Sp}(2g,\Z)}
\newcommand{\spr}{\mathrm{Sp}(2g,\R)}
\newcommand{\slzz}{\mathrm{SL}(2g,\Z)}
\newcommand{\slzn}{\mathrm{SL}(n,\Z)}
\newcommand{\slrr}{\mathrm{SL}(2g,\R)}
\newcommand{\aut}{\textnormal{Aut}(\pi_1(S))}
\newcommand{\out}{\textnormal{Out}(\pi_1(S))}
\newcommand{\hrho}{\overline{\rho}}
\newcommand{\homrep}{\mathrm{H}_1(S,\Z)}
\lbrace\begin{array}{@{}l@{}}}%
\def\qr#1#2{%
      \raise1ex\hbox{$#1$}\Big/ \lower1ex\hbox{$#2$}%
}
\def\qrr#1#2{%
      \raise1ex\hbox{$#1$}\Big/\Big/ \lower1ex\hbox{$#2$}%
}
\def\ql#1#2{%
      \lower1ex\hbox{$#1$}\Big\backslash \raise1ex\hbox{$#2$}%
}
\lbrace\begin{array}{@{}l@{}}}%
\begin{document}
\title[Modular orbits on the representation spaces of compact abelian Lie groups]{Modular orbits on the representation spaces of compact abelian Lie groups}

\author[Y. Bouilly]{Yohann Bouilly}
\address{\textnormal{\textbf{Yohann Bouilly}} IRMA, UMR 7501 7 rue René-Descartes 67084 Strasbourg Cedex, France}
\email{bouilly@math.unistra.fr}

\author[G. Faraco]{Gianluca Faraco}
\address{\textnormal{\textbf{Gianluca Faraco}} Max Planck Institute for Mathematics, Bonn, Germany}
\address{\textnormal{Current address:} Mathematisches Institut Rheinische Friedrich-Wilhelms-Universit\"at, Bonn, Germany}
\email{gfaraco@uni-bonn.de}

\keywords{Mapping Class Group, Character Variety, Dense Representations, Abelian Lie groups}
\subjclass[2010]{57K20,11H99}%
\date{\today}
\dedicatory{}

\begin{abstract}
Let $S$ be a closed surface of genus $g$ greater than zero. In the present paper we study the topological-dynamical action of the mapping class group on the $\mathbb T^n$-character variety giving necessary and sufficient conditions for Mod$(S)$-orbits to be dense. As an application, such a characterisation provides a dynamical proof of the Kronecker's Theorem concerning inhomogeneous diophantine approximation.
\end{abstract}

\maketitle
\tableofcontents

\section{Introduction} \label{intro}

\noindent Let $S$ be a closed, connected and oriented topological surface of genus $g$. Its fundamental group $\pi_1(S)$ admits the presentation 
\begin{equation}
    \Big\langle\alpha_1,\beta_g,\dots,\alpha_g,\beta_g\mid \prod_{i=1}^g [\alpha_i,\beta_i]=1\Big\rangle.
\end{equation}

\noindent Let $G$ be a connected Lie group and let \textnormal{Hom}$(\pi_1(S), G)$ denote the set of representations of $\pi_1(S)$ in $G$. Such a set can be topologized with the compact-open topology and the resulting space is commonly known as \emph{representation space}.  There is a natural action of $G$ on \textnormal{Hom}$(\pi_1(S), G)$ obtained by post-composing representations with inner automorphisms of $G$. The resulting quotient space \textnormal{Hom}$(\pi_1(S), G)/G$ is a space canonically associated to $S$ (or $\pi_1(S)$) and $G$. When $G$ is an algebraic and reductive Lie group, the quotient space is commonly known as $G$-\emph{character variety of} $\pi_1(S)$. It can be shown that \textnormal{Hom}$(\pi_1(S),G)/G$ identifies with the moduli space of isomorphism classes of flat $G$-bundles over $S$.

\smallskip

\noindent We next consider the effect of changing the presentation of $\pi_1(S)$. This can be done by pre-composing any representation with an automorphism $\phi\in$ \textnormal{Aut}$(\pi_1(S))$ such that any representation $\rho$ is sent to $\rho\circ\phi^{-1}$. We can, therefore, consider the action of \textnormal{Aut}$(\pi_1(S))$ on \textnormal{Hom}$(\pi_1(S), G)$. The actions of $G$ and $\aut$ can be combined together in the following way: For any pair $(\phi,g)\in \aut\times G$ and $\gamma\in\pi_1(S)$ we define
\[ (\phi,g)\cdot \rho(\gamma)=g\big(\rho\circ\phi^{-1}(\gamma)\big)g^{-1} .
\] It is clear that the action of the normal subgroup \textnormal{Inn}$(\pi_1(S))<\aut$, consisting of inner automorphisms, is absorbed into the action of $G$. In other words, the action of \textnormal{Inn}$(\pi_1(S))$ on \textnormal{Hom}$(\pi_1(S), G)$ descends to the trivial action on \textnormal{Hom}$(\pi_1(S), G)/G$. Thus, there is a well-defined action of the \emph{outer automorphisms group} \textnormal{Out}$(\pi_1(S))=\aut/\textnormal{Inn}(\pi_1(S))$ on  \textnormal{Hom}$(\pi_1, G)\slash G$.\\

\noindent When $S$ is a closed surface, the group $\out$ has a very explicit geometric interpretation. Every homeomorphism of $S$ determines an automorphism of $\pi_1(S)$. On the other hand, for closed surfaces, the Dehn-Nielsen-Baer theorem \cite{NI27} states that every automorphism of $\pi_1(S)$ is induced by a homeomorphism of $S$ - this actually true also for the punctured torus, but it is no longer true for other surfaces with boundary. Now, homeomorphisms which are isotopic can be considered as equivalent, and determine conjugate automorphisms of $\pi_1(S)$, \emph{i.e.} a well-defined element of $\out$. Conversely, automorphisms which are conjugate can be considered as equivalent, and determine isotopic homeomorphisms. Therefore, for any closed surface the following isomorphism holds
\[ \frac{\text{Homeo}(S)}{\text{Isotopy}}\cong\out.
\] 

\begin{defn}
The \emph{mapping class group of $S$} is defined as Mod$(S)=\displaystyle\frac{\text{Homeo}(S)}{\text{Isotopy}}$.
\end{defn}

\noindent The character variety can be endowed with a symplectic structure which is preserved by the $\mathrm{Out}(\pi_1(S))$-action, see Goldman \cite{GO84}. By taking the volume form associated to the symplectic structure, we obtain a finite measure $\mu_S$ and the space \textnormal{Hom}$(\pi_1(S), G)\slash G$ turns into a measured space on which the group \textnormal{Out}$(\pi_1(S))$ acts preserving measure. The dynamic of this action is known for compact Lie groups:
\begin{thmnn}[Goldman, Pickrell-Xia]\label{thm3}
Let $G$ be a compact Lie group. Then the mapping class group acts ergodically on each connected components of \textnormal{\emph{Hom}}$(\pi_1(S),G)/G$ with respect to the finite measure $\mu_S$.
\end{thmnn}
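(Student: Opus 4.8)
The plan is to reduce the statement to the ergodicity of a family of explicit Hamiltonian flows on the character variety and then to propagate that ergodicity across the whole space by varying the underlying curve system. The starting point is that, by the Dehn--Lickorish theorem, $\modul$ is generated by finitely many Dehn twists $T_{\gamma_1},\dots,T_{\gamma_k}$ along simple closed curves, so it suffices to understand how these twists act. The crucial input here is Goldman's theory of twist flows: for a simple closed curve $\gamma$ and a conjugation-invariant smooth function $f\colon G\to\R$, there is an associated Hamiltonian flow $\Xi^f_t$ on the symplectic space \textsf{Hom}$(\pi_1S,G)/G$, and the Dehn twist $T_\gamma$ arises as a time map of such a flow for a suitable choice of $f$. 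Concretely, cutting $S$ along $\gamma$, the flow moves the two sides relative to each other along the one-parameter subgroup of $G$ through the holonomy $\rho(\gamma)$, so that the twist amounts to conjugating one side by $\rho(\gamma)$ itself. Thus the $\modul$-action is subordinate to the group generated by the twist flows, and it is enough to analyse the latter.

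Next I would set up action--angle type coordinates. Fixing a pants decomposition of $S$ by $3g-3$ disjoint simple closed curves, on a dense open subset of each connected component the conjugacy classes of the boundary holonomies play the role of action variables, while the relative twist parameters along the cutting curves sweep out torus fibers (products of maximal tori of $G$) that serve as the angle variables. In these coordinates each twist flow acts as a translation along the fibers whose frequency vector is determined by the action variables. An $L^2$ function invariant under the $\modul$-action is in particular invariant under all of these translations; expanding it in a Fourier series along a generic fiber and using that, for almost every value of the action variables, the relevant frequencies are rationally independent (a Kronecker/Weyl equidistribution argument, as in the abelian case treated later in this paper), one concludes that every non-constant Fourier mode vanishes. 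Hence any invariant function is constant along almost every torus fiber of the chosen decomposition.

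Finally --- and this is where the real difficulty lies --- I would show that being constant along the fibers of \emph{every} pants decomposition, together with invariance, forces the function to be globally constant. A single decomposition only foliates the space by tori, so one must connect different fibers. Here I would argue by induction on the genus: a change of pants decomposition is itself realised by an element of $\modul$ and carries one torus foliation to a transverse one, while the composite flows generated by twists along a system of curves that fill the surface act transitively enough on the complement of the action variables to link almost all fibers. Controlling the non-resonance conditions uniformly for a non-abelian compact $G$, and handling the lower-complexity gluing pieces produced by the induction, is the main obstacle; this is precisely the point at which Goldman's original $\mathrm{SU}(2)$ argument in genus two and the Pickrell--Xia induction for arbitrary compact $G$ do the heavy lifting. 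That ergodicity holds on each connected component separately reflects the fact that the twist flows preserve the components of \textsf{Hom}$(\pi_1S,G)/G$, which are indexed by the topological type of the underlying flat $G$-bundle.
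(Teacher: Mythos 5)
The paper does not prove this statement at all: it is quoted as a known theorem of Goldman (for groups whose simple factors are locally isomorphic to $\mathrm{SU}(2)$) and of Pickrell--Xia (for general compact $G$), so there is no internal proof to compare your attempt against. Judged on its own terms, your proposal is a faithful outline of the strategy those authors actually use --- Dehn twists realised as time maps of Goldman's Hamiltonian twist flows, action--angle coordinates attached to a pants decomposition, Fourier analysis along the resulting torus fibers, and then variation of the curve system --- but it is an outline rather than a proof, because the two decisive steps are asserted rather than established.

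Concretely: (i) the non-resonance claim, that for almost every value of the action variables the translation frequencies on the torus fibers are rationally independent, is exactly the point where the non-abelian structure of $G$ enters, and you give no argument for it beyond an appeal to ``a Kronecker/Weyl equidistribution argument, as in the abelian case''; the abelian computation in this paper does not transfer, since for non-abelian $G$ the fibers are products of maximal tori over a stratified base and the frequency map depends on the conjugacy classes of the boundary holonomies in a way that has to be computed. (ii) The passage from ``constant along the fibers of each decomposition'' to ``globally constant'' is the heart of the theorem; your remark that a change of pants decomposition is realised by an element of $\modul$ is only true for decompositions of the same topological type, and in any case conjugating one foliation to another by an invariant map does not by itself link distinct fibers --- what is actually needed is an analysis of twist flows along pairs of intersecting curves together with an induction on the complexity of the cut-up surface. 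You explicitly delegate precisely this step to ``Goldman's original $\mathrm{SU}(2)$ argument'' and ``the Pickrell--Xia induction'', which makes the argument circular as a proof of the theorem those papers establish. As a summary of the known proof your text is essentially accurate; as an independent proof it has a genuine gap at its central step.
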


\noindent A direct consequence of ergodicity is that almost every $\mathrm{Mod}(S)$-orbit is dense. A more subtle problem concerns the topological dynamics of the mapping class group action on the space \textnormal{Hom}$(\pi_1(S),G)/G$. The topological-dynamical problem is definitely more delicate since no longer we may ignore invariant subsets of measure zero. For instance, if $H$ is a finite subgroup of $G$, the space \textnormal{Hom}$(\pi_1(S),H)$ is finite and its image in \textnormal{Hom}$(\pi_1(S),G)/G$ is an invariant closed subset under the action of the mapping class group. It follows that, even when the action of Mod$(S)$ is ergodic, not all the orbits are dense in \textnormal{Hom}$(\pi_1(S),G)/G$. In \cite[Problem 2.7]{GO06}, Goldman posed the following problem - we refer to it in the sequel as main problem.

\smallskip

\begin{quote}
\begin{quote}
\textbf{Main Problem:} \emph{Determine necessary and sufficient conditions on a general representation $\rho$ for its orbit \textnormal{Mod}$(S)\cdot \rho$ to be dense.}
\end{quote}
\end{quote}

\smallskip

\noindent We introduce the following

\begin{defn}
A representation $\rho:\pi_1(S)\longrightarrow G$ is defined as \emph{dense representation} if the image of $\rho$ is dense in $G$.
\end{defn}

\noindent For closed surfaces, the following claim is expected: If the image of a representation $\rho$ is dense in $G$, then the Mod$(S)$-orbit of $\rho$ is dense in \textnormal{Hom}$(\pi_1(S),G)/G$. This is currently true for representations in SU$(2)$. In fact, this case has been completely treated by Previte and Xia \cite{PrX2} and it is based on their earlier work \cite{PrX1} on which they consider the case of representations in SU$(2)$ for the punctured torus. For all other compact Lie groups the problem is still open, and in the present work we provide a positive answer also in the case of the $n$-dimensional torus $\T^n$. The following Theorem is in fact the main result of this work.

\begin{thma}\label{thmbf}
Let $S$ be a surface of genus $g\ge1$ and $\pi_1(S)$ its fundamental group. Let $\rho:\pi_1(S)\longrightarrow \Bbb T^n$ be a representation. Then the image of $\rho$ is dense in $\Bbb T^n$ if and only if the mapping class group orbit \emph{Mod}$(S)\cdot\rho$ is dense in the representation space.
\end{thma}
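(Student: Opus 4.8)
The plan is to exploit that $\T^n$ is abelian in order to reduce the entire statement to a question about an arithmetic group acting on a torus. Since $\T^n$ is abelian, every $\rho\colon\pi_1S\to\T^n$ kills the commutator subgroup and hence factors through the abelianisation $\homrep\cong\Z^{2g}$; moreover conjugation in $\T^n$ is trivial, so the representation space and the character variety coincide and are canonically identified with $\mathrm{Hom}(\Z^{2g},\T^n)\cong\T^{2gn}$, which I would write as the space of matrices $V\in M_{n\times 2g}(\R/\Z)$ whose columns are the images of the generators. Under this identification the $\modul$-action factors through the symplectic representation $\modul\twoheadrightarrow\spz$ on $\homrep$, acting by $V\mapsto VA$ with $A\in\spz$. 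The image of $\rho$ is the subgroup of $\T^n$ generated by the columns of $V$, and by Pontryagin duality it is dense if and only if no nonzero character $m\in\Z^n$ kills every column, i.e. if and only if there is no nonzero $m\in\Z^n$ with $m^{\mathsf T}V\equiv 0\pmod{\Z^{2g}}$. Thus the theorem becomes: the $\spz$-orbit of $V$ is dense in $\T^{2gn}$ if and only if no such $m$ exists.

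For the easy direction I would argue the contrapositive. If the image of $\rho$ is not dense, fix a nonzero $m\in\Z^n$ with $m^{\mathsf T}V\equiv 0$. For every $A\in\spz$ one has $m^{\mathsf T}(VA)=(m^{\mathsf T}V)A\equiv 0$ because $A$ is integral; hence the whole orbit lies in the proper closed subtorus $\{W\in\T^{2gn}:m^{\mathsf T}W\equiv 0\}$, of dimension $2g(n-1)<2gn$. The orbit closure is therefore contained in a proper closed subset, so it is not dense.

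The substance is the converse, for which I would use Dehn twists. A Dehn twist about a non-separating simple closed curve with primitive homology class $c\in\Z^{2g}$ acts on $\homrep$ as the symplectic transvection $\tau_c\colon x\mapsto x+\langle x,c\rangle c$, and such transvections generate $\spz$; on $\T^{2gn}$ its $k$-th power sends each row $r_i$ of $V$ to $r_i+k\langle r_i,c\rangle c$. Reading this inside the one-dimensional subtorus $\R c/\Z c\subset\T^{2g}$, the closure of $\{\tau_c^{\,k}V\}_{k\in\Z}$ is the coset $V+\Sigma_c$, where $\Sigma_c\subseteq(\R c/\Z c)^n$ is the subtorus cut out, by Kronecker--Weyl equidistribution on a torus, by the $\Q$-linear relations among $1,\langle r_1,c\rangle,\dots,\langle r_n,c\rangle$. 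This is the engine of the proof: a single twist already forces the orbit closure $X=\overline{\spz\cdot V}$ to contain positive-dimensional subtorus cosets in the direction $c$, with the dimension governed by a Diophantine independence condition on the symplectic pairings, which is exactly the content of Kronecker's theorem.

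To finish I would run a dimension-increasing argument on the maximal subtorus $T=\{t\in\T^{2gn}:X+t=X\}$, a closed subgroup whose cosets tile $X$. Given a coset $w+T'\subseteq X$ with $T'\subsetneq\T^{2gn}$, I would apply a twist $\tau_c$ along a primitive $c$ chosen so that $\langle\cdot,c\rangle$ is non-constant on the relevant rows, or constant with irrational value; the spreading mechanism then enlarges $X$ to contain a coset of a strictly larger subtorus. Using transitivity of $\spz$ on primitive vectors together with the invariance of $X$, one feeds new independent directions into $T$ until either $T=\T^{2gn}$, giving $X=\T^{2gn}$, or the process stalls. I expect the stalling case to be the main obstacle: one must show that if $X$ is trapped in a proper $\spz$-invariant subtorus then there is a nonzero $m\in\Z^n$ with $m^{\mathsf T}V\equiv 0$, contradicting density of the image. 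Equivalently, the crux is to classify the proper $\spz$-invariant subtori of $\T^{2gn}$ and to verify that each forces a character relation on $V$; this is precisely where the full strength of the Diophantine input is needed, since for $n>1$ one must control not only each pairing but the correlations among $\langle r_1,c\rangle,\dots,\langle r_n,c\rangle$ simultaneously, the regime in which one invokes Kronecker's theorem (and, if a homogeneous-dynamics reformulation is preferred, the equidistribution supplied by Ratner's theorem).
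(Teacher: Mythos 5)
Your reduction to the $\spz$-action on $\T^{2gn}$ and your proof of the easy direction are correct and agree with the paper: the paper likewise passes to $\homrep$, shows the Torelli group acts trivially, identifies the representation space with a matrix torus, and characterises density of $\mathrm{im}\,\rho$ by the absence of a nonzero integer covector killing the matrix modulo the lattice (its Theorem 3.1, equivalent to your Pontryagin-duality criterion). Your observation that a character relation $m^{\mathsf T}V\equiv 0$ propagates to the whole orbit and traps it in a proper closed subgroup is exactly the right argument for the ``only if'' half.

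The hard direction, however, is not proved: you have written a strategy in which the decisive step is explicitly deferred. Concretely, two things are missing. First, knowing that $\overline{\{\tau_c^{\,k}V\}}=V+\Sigma_c$ gives you a coset \emph{through the point $V$} inside $X=\overline{\spz\cdot V}$; it does not show that translation by $\Sigma_c$ stabilises all of $X$, so $\Sigma_c$ need not inject into your stabiliser subgroup $T=\{t: X+t=X\}$, and the ``dimension-increasing'' induction on $T$ has no base to stand on. Second, even granting a spreading mechanism, the case you call ``stalling'' --- ruling out a proper closed $\spz$-invariant set containing the orbit when no character relation holds --- is the entire content of the theorem, and you only assert that it follows from ``Kronecker or Ratner.'' The genuine difficulty is the simultaneous Diophantine control of the $n$ pairings $\langle r_1,c\rangle,\dots,\langle r_n,c\rangle$ as $c$ ranges over primitive vectors; for $n>1$ a dense representation can have every individual column and every individual pairing behave badly (the paper's Appendix A.2 gives dense representations for which no curve has dense image), so one cannot bootstrap from a single well-chosen $c$. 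The paper closes this gap by a different route: it applies Ratner's theorem to $\Gamma=\spz\ltimes\mathrm{M}(n,2g;2\pi\Z)$ acting on $\mathrm{M}(n,2g;\R)\cong G/U$ with $G=\spr\ltimes\mathrm{M}(n,2g;\R)$, so that the orbit closure is automatically a homogeneous set $\Gamma H_\gamma p$ with $H_\gamma\cong\spr\ltimes K_\gamma$, and then classifies the possible $\spr$-invariant linear subspaces $K_\gamma$ (Proposition 4.3): the $\pi\Q$-freeness of the rows forces the smallest invariant subspace containing the orbit directions \emph{and meeting the lattice in a lattice} to be all of $\mathrm{M}(n,2g;\R)$. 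If you want to avoid Ratner, you would need to supply an argument of comparable strength for your invariant-subtorus classification; as written, the proposal assumes the conclusion at its crux.
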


\noindent The proof strongly relies on the explicit knowledge of the objects involved. In fact, the $n$-torus has a well-known description and, thanks to the abelian property, the character variety coincides with the representation space since the action of $\Bbb T^n$ by conjugation is trivial. Even better, the representation space can be identified with a torus of suitable dimension, hence the description of the representation space $-$ and then of the character variety $-$ is very explicit. The main difficulties in the abelian case concern questions coming from number theory and ergodic theory, see Section \S\ref{cwkat}.

\smallskip

\subsection{Strategy of the proof and related results} Each given representation $\rho:\pi_1(S)\longrightarrow \T^n$ induces a homological representation $\hrho:\homrep\longrightarrow \T^n$, namely an element of the \emph{homological representation space} $\textnormal{Hom}\big(\homrep, \T^n\big)$. The map associating to any representation $\rho$ its homological representation defines a bijection between the representation space and the homological representation space. This essentially follows because the commutator group $[\pi_1(S),\pi_1(S)]$ is trivially a subgroup of $\textnormal{ker}(\rho)$ in the abelian case, and such a property is no longer true for a generic non-abelian Lie group. There is also a well-defined action of the symplectic group $\spz$ on the space $\textnormal{Hom}\big(\homrep, \T^n\big)$ by precomposition. Given a representation $\rho:\pi_1(S)\longrightarrow \T^n$ and its induced representation $\hrho:\homrep\longrightarrow \T^n$, the Mod$(S)$-orbit of $\rho$ coincides with the $\spz$-orbit of $\hrho$. As an immediate consequence we obtain an equivalent version of the main Theorem \ref{thmbf}, namely we have the following.

\begin{thma}\label{thmbf2}
Let $S$ be a surface of genus $g\ge1$ and let $\hrho:\homrep\longrightarrow \Bbb T^n$ be a representation. Then the image of $\rho$ is dense in $\Bbb T^n$ if and only if the symplectic group orbit $\spz\cdot\hrho$ is dense in the homological representation space.
\end{thma}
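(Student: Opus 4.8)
The plan is to work entirely in the linear-algebraic model of the homological representation space. Since $\homrep\cong\Z^{2g}$, a representation $\hrho$ is the same datum as an $n\times 2g$ matrix $M$ with entries in $\R/\Z$, the $j$-th column being the image of the $j$-th standard generator; the space $\textsf{Hom}(\homrep,\T^n)$ is thereby identified with the torus $(\R/\Z)^{n\times 2g}\cong\T^{2gn}$, and precomposition by $B\in\spz$ becomes right multiplication $M\mapsto MB$, i.e. integral column operations. The crucial observation is that this action proceeds \emph{row by row}: writing the rows of $M$ as $r_1,\dots,r_n\in\T^{2g}$, right multiplication is the diagonal action of $\spz$ on $(\T^{2g})^n$, each factor carrying the standard right action on $\T^{2g}$. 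By Pontryagin duality the image of $\hrho$ fails to be dense precisely when some nonzero $\lambda\in\Z^n$ satisfies $\lambda M\equiv 0$, that is $\sum_k\lambda_k r_k=0$ in $\T^{2g}$; so ``image dense'' is equivalent to the rows admitting no nontrivial integral relation.

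The easy direction I would dispatch first. Right multiplication by a unimodular integer matrix does not change the subgroup of $\T^n$ generated by the columns, so the image subgroup is constant along the entire orbit. If that subgroup had proper closure $\overline H\subsetneq\T^n$, every point of the orbit would lie in $(\overline H)^{2g}$, a proper closed subset of $\T^{2gn}$, contradicting density of the orbit. Hence a dense orbit forces a dense image.

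For the converse, which is the substance of the theorem, I would assume no integral relation among the rows and prove that the diagonal orbit closure $\overline O\subseteq(\T^{2g})^n$ is everything. The organising principle is that proper closed $\spz$-invariant subtori of $(\T^{2g})^n$ come from rational invariant subspaces of $\Q^{2g}\otimes\Q^n$; since the standard representation $\Q^{2g}$ is (absolutely) irreducible and $\spz$ is Zariski dense in $\spr$, every such subspace has the shape $\Q^{2g}\otimes W$ for some $W\le\Q^n$, and the associated subtorus is exactly a relation subtorus $\{\sum_k\lambda_k r_k=0:\lambda\in W^\perp\}$. Thus the no-relation hypothesis says precisely that $\overline O$ lies in no proper invariant subtorus. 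To upgrade this into genuine density I would run a homogeneous-dynamics argument: realise the action inside $G/\Gamma$ with $G=\spr\ltimes\R^{2gn}$ and $\Gamma=\spz\ltimes\Z^{2gn}$, and exploit the unipotent symplectic-transvection subgroups. For a transvection $T_u$ one has $MT_u^{\,k}=M+k\,(\langle r_1,u\rangle u,\dots,\langle r_n,u\rangle u)$, an honest line whose closure is a subtorus coset governed by Kronecker's Approximation Theorem; the no-relation hypothesis is what makes the scalars $\langle r_1,u\rangle,\dots,\langle r_n,u\rangle$ rationally independent for suitable $u$, so that the line spreads the $n$ rows independently, and iterating over transvection directions spanning $\Z^{2g}$ fills out $(\T^{2g})^n$, with Ratner's Theorem supplying the orbit-closure control in the simultaneous several-row regime.

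The main obstacle is exactly this last point. Because the action is diagonal, a single element of $\spz$ transforms all $n$ rows by the \emph{same} matrix, so one cannot nudge the rows independently in any naive fashion; one must instead show that the hypothesis ``no integral relation among the rows'' propagates to genuine rational independence of the displacement directions generated by the unipotent subgroups. Marrying the number-theoretic input (Kronecker) to the ergodic-theoretic input (Ratner) to secure this simultaneous spreading is the heart of the proof, and is carried out in Section \ref{cwkat}.
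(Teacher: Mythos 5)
Your proposal is correct and follows essentially the same route as the paper: pass to the matrix model $\text{M}\big(n,2g;\T\big)$, characterise density of the image by the absence of integral relations among the rows (your Pontryagin-duality criterion is equivalent to the $\Z$-row-rank and $\pi\Q$-freeness condition of Theorem \ref{codr}), and apply Ratner's orbit-closure theorem to $\Gamma=\spz\ltimes \text{M}\big(n,2g;2\pi\Z\big)$ acting in $G=\spr\ltimes \text{M}\big(n,2g;\R\big)$. The simultaneous-spreading step you defer at the end is precisely Proposition \ref{kgclass} of the paper: the group $K_\gamma$ supplied by Ratner's Theorem is an $\spr$-invariant rational subspace containing the orbit's displacement directions, and the no-relation hypothesis forces it to be all of $\text{M}\big(n,2g;\R\big)$.
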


\noindent Along the way of our investigation we shall remark the following result:
\begin{quote}
 \emph{The action of the Torelli group $\mathcal{I}(S)$ on the representation space $\emph{\textnormal{Hom}}\big(\pi_1(S), \T^n\big)$ is trivial.}
 \end{quote}
\noindent  For a proof of this fact we refer to our Proposition \ref{torga} below. This claim seems to be known, however, at the best knowledge of the authors, it has never been stated explicitly. In contrast to the abelian case, the first named author has shown in \cite{YB} that for any connected, compact and semi-simple Lie group, the action of the Torelli group on the character variety is ergodic.

\smallskip

\noindent Given a representation, we have reduced the problem to the study of $\spz$-orbit of instead of modular orbits. This makes the study of orbits more understandable because the symplectic group is linear. We shall make the action even more explicit by identifying a representation with a matrix in the space $\text{M}\big(n,2g;\,\T\big)$. Such a space will be introduced later on in section \S\ref{matrep}. After these reductions, we shall see that we are in the position to apply Ratner's Theorem for studying orbit closures. In particular, we shall derive our main Theorem \ref{thmbf}. 

\begin{rmk} For the torus, the reader may notice that Theorems \ref{thmbf} and \ref{thmbf2} are not only equivalent but actually the same statement in the strict sense. Indeed, in this very particular case the following equalities $\pi_1(S)=\homrep$ and Mod$(S)=\text{SL}(2,\Z)=\spzz$ holds.
\end{rmk}

\noindent The strategy we propose for Theorem \ref{thmbf} is different to the one developed by Previte-Xia to show their main theorem \cite[Theorem 1.4]{PrX2}. Let us briefly give some more details. Given a dense representation $\rho:\pi_1(S)\longrightarrow \text{SU}(2)$ - Previte-Xia defined such a representation \emph{generic} (see \cite[Definition 1.6]{PrX2}) - they firstly found a handle $\Sigma$, namely a one-holed torus, such that the restriction of $\rho$ to $\pi_1\Sigma$ is dense. After obtaining a dense handle, they proceed to demonstrate the base density theorem for the $(n+2g- 2)$-holed torus. A similar process in the abelian case is not possible because dense handles do not always exist, see the discussion at Section \S\ref{fdv} in Appendix \S\ref{mtoht}. In the light of Proposition \ref{torga}, we shall bypass this issue by looking at the $\spz$-action on the representation space as described above. 

\subsection{Connection with the Kronecker's Approximation Theorem}\label{cwkat} The dynamical result provided by Theorem \ref{thmbf} finds an application on the theory of geometry of numbers. An important theorem in this topic is the Kronecker’s theorem  concerning inhomogeneous Diophantine approximation, see Section \S\ref{apres} below for the precise statement.

\smallskip

\noindent By fixing a presentation of $\pi_1(S)$, we can associate to any representation $\rho:\pi_1(S)\longrightarrow\T^n$ a matrix $\Theta_\rho\in\text{M}\big(n,2g;\R\big)$ - see Definition \ref{mr} and \ref{matrep} below for the details. We shall prove that a representation $\rho$ is dense if and only if the rows of the matrix $\Theta_\rho$ satisfy the hypothesis of Kronecker's theorem, this is our Theorem \ref{codr}. On the other hand, our main result says that the modular orbit of a representation $\rho$ is dense in the representation space if and only if $\rho$ itself is a dense representation. As the representation space identifies with $\T^{2ng}$, see section \S\ref{tcv} below, Theorem \ref{thmbf2} provides a dynamical proof of Kronecker's Theorem in the cases of $l=m=2g$, for some $g\ge1$. 
More precisely we show that:

\begin{thma}\label{thmbf4}
Let $m=2g$ with $g\ge1$. Let $b^{(i)}=\big(b_1^{(i)},\dots,b_m^{(i)}\big)$, with $i=1,\dots,n$, be vectors of $\R^m$ such that $b^{(1)},\dots,b^{(n)},$ $\pi e_1,\dots,\pi e_m$ are linearly independent over $\Q$ in the vector space $\R^m$. Let $A\in\text{M}\big(n,m;\R)$ be a real matrix and let $\varepsilon$ be a positive number. Then there is an element $K\in\spz$ such that
\begin{equation}
 \Big| \Big|\, A-B\, K\,\Big|\Big|<C\varepsilon\textnormal{ mod } 2\pi.
\end{equation} where $C$ is a constant depending only on $m$ and $n$ and the norm is any norm on $\text{M}\big(n,m;\R)$.
\end{thma}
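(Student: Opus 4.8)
The plan is to read the statement as the orbit-density conclusion of Theorem \ref{thmbf2} transported into coordinates. First I would let $B\in\text{M}\big(n,m;\R\big)$ denote the matrix whose $i$-th row is $b^{(i)}$, and regard $B$ modulo $2\pi$ as a point of the homological representation space. Recall from Section \ref{matrep} and Section \ref{tcv} that, after fixing a symplectic basis of $\homrep\cong\Z^{2g}$, the space $\textsf{Hom}\big(\homrep,\T^n\big)$ is identified with $\text{M}\big(n,2g;\T\big)\cong\T^{2ng}$, an $n\times 2g$ matrix recording the images of the generators; since $m=2g$ this is exactly the ambient space of $B$. Under this identification the $\spz$-action by precomposition becomes right multiplication, so the orbit of $B$ is $\big\{BK \bmod 2\pi : K\in\spz\big\}$. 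Note that this set is insensitive to replacing $K$ by $K^{-1}$ or $K^{\top}$, since $\spz$ is stable under both operations, so the direction of the action is immaterial for the density question.

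Second, I would check that the hypothesis is precisely the density criterion. By Theorem \ref{codr}, the representation $\hrho_B$ associated to $B$ is dense in $\T^n$ if and only if the rows of $B$ satisfy Kronecker's condition, namely that $b^{(1)},\dots,b^{(n)},\pi e_1,\dots,\pi e_m$ are linearly independent over $\Q$ in $\R^m$; this is exactly the standing assumption. The replacement of $2\pi e_j$ by $\pi e_j$ is harmless, since rescaling a vector by a nonzero rational does not affect $\Q$-linear independence, and the $\Q$-span of $\{\pi e_j\}$ coincides with that of $\{2\pi e_j\}$. Hence $\hrho_B$ is a dense representation.

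Third, Theorem \ref{thmbf2} now applies directly: density of $\hrho_B$ forces the orbit $\spz\cdot\hrho_B$ to be dense in $\text{M}\big(n,2g;\T\big)$. Unwinding this density at the target point $A \bmod 2\pi$ yields, for every $\delta>0$, an element $K\in\spz$ with $d(A,BK)<\delta$ in the flat torus metric $d$, where each entry is measured modulo $2\pi$. Finally, since $\text{M}\big(n,m;\R\big)$ is finite-dimensional, any two norms are comparable, so the fixed reference metric $d$ and the given norm $\|\cdot\|$ differ by a factor $C=C(m,n)$ depending only on the dimensions; choosing $\delta=\varepsilon$ gives $\big|\!\big| A-B\,K\big|\!\big|<C\varepsilon \bmod 2\pi$, as required.

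The genuine content is carried entirely by Theorems \ref{codr} and \ref{thmbf2}, so the only real work here is bookkeeping: matching the lattice normalisation ($\pi$ versus $2\pi$) entering Kronecker's condition, fixing the direction of the $\spz$-action on matrices, and pinning down the identification of the representation space with $\T^{2ng}$, and then tracking how the abstract density statement converts into an explicit inequality with a dimension-dependent constant. I expect the subtlest point to be verifying that the $\Q$-linear independence hypothesis as written coincides \emph{verbatim} with the density criterion of Theorem \ref{codr}, since an off-by-a-scalar discrepancy there would alter the class of admissible $B$ and hence the scope of the statement.
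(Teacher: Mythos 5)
Your proposal is correct and follows essentially the same route as the paper: the authors prove Theorem \ref{thmbf4} by establishing Proposition \ref{aiffd} (the equivalence with Theorem \ref{thmbf} via the identification of the representation space with $\text{M}\big(n,2g;\T\big)$ and the triviality of the Torelli action), with the hypothesis on $B$ matching the density criterion of Theorem \ref{codr} exactly as you verify. Your explicit check that the $\Q$-independence condition coincides with the $\pi\Q$-freeness/maximal-$\Z$-rank criterion is left implicit in the paper's write-up, but the substance is identical.
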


\noindent This is a sharper simultaneous approximation result because, in principle, one can always find a matrix $K\in\text{M}\big(2g,\Z\big)$ according to Kronecker's Theorem. For the sake of comprehension, the proof of Theorem \ref{thmbf4} is delayed until section \S\ref{apres} when we have developed the theory and notation even further. As we shall see, the proof of this result reduces to prove the following characterisation.

\begin{propa}\label{aiffd}
Theorem \ref{thmbf} holds if and only if Theorem \ref{thmbf4} holds.\\
\end{propa}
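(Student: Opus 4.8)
The plan is to set up an explicit dictionary between the dynamical language of Theorem \ref{thmbf} and the Diophantine language of Theorem \ref{thmbf4}, and then to verify that under this dictionary the two statements carry exactly the same content. First I would recall the identifications already established: by Theorem \ref{thmbf2} and the discussion in Section \ref{matrep}, a representation $\rho$ is recorded by its homological representation $\hrho$, which in turn corresponds to the matrix $\Theta_\rho\in\mathrm{M}(n,2g;\R)$ whose entries are the coordinates (taken $\bmod\,2\pi$) of the values of $\hrho$ on a symplectic basis of $\homrep\cong\Z^{2g}$; by Section \ref{tcv} the representation space is thereby identified with $\mathrm{M}(n,2g;\T)\cong\T^{2ng}$. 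Under this identification the precomposition $\spz$-action becomes right multiplication $\Theta_\rho\mapsto\Theta_\rho K$ by integral symplectic matrices $K\in\spz$, and the closure of $\mathrm{Mod}(S)\cdot\rho$ is the closure of $\{\Theta_\rho K \bmod 2\pi : K\in\spz\}$.

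Next I would match hypotheses and conclusions. Writing $m=2g$ and letting $b^{(1)},\dots,b^{(n)}$ be the rows of $\Theta_\rho$, Theorem \ref{codr} asserts that $\mathrm{im}\,\rho$ is dense in $\T^n$ if and only if these rows together with $\pi e_1,\dots,\pi e_m$ are linearly independent over $\Q$; since $\Q$ is a field the normalisation $\pi$ versus $2\pi$ is immaterial, so this is precisely the standing hypothesis on $B=\Theta_\rho$ in Theorem \ref{thmbf4}. On the other side, density of the orbit in $\mathrm{M}(n,m;\T)$ means exactly that for every target $A\in\mathrm{M}(n,m;\R)$ and every tolerance there is $K\in\spz$ with $\Theta_\rho K$ arbitrarily close to $A$ modulo $2\pi$; because all norms on $\mathrm{M}(n,m;\R)$ are equivalent, this qualitative statement is equivalent to the quantitative conclusion $\|A-BK\|<C\varepsilon \bmod 2\pi$ of Theorem \ref{thmbf4}, with $C=C(m,n)$ absorbing the norm-equivalence factor and $\varepsilon$ free. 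Thus the implication ``$\mathrm{im}\,\rho$ dense $\Rightarrow\mathrm{Mod}(S)\cdot\rho$ dense'' of Theorem \ref{thmbf} is, term by term, the assertion of Theorem \ref{thmbf4}.

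It remains to account for the reverse implication hidden in the biconditional of Theorem \ref{thmbf}, which I would dispose of unconditionally so that it does not affect the equivalence. Since any $\phi\in\aut$ is surjective, $\rho$ and $\rho\circ\phi^{-1}$ have the same image, so the closed subgroup $\overline{\mathrm{im}\,\rho}\le\T^n$ is an invariant of the action. Hence if $\mathrm{im}\,\rho$ is not dense, every representation in the orbit has image in the proper closed subgroup $H=\overline{\mathrm{im}\,\rho}$, so the orbit lies in the proper closed set $\{\sigma:\mathrm{im}\,\sigma\subseteq H\}$ and cannot be dense; this proves ``$\mathrm{Mod}(S)\cdot\rho$ dense $\Rightarrow\mathrm{im}\,\rho$ dense'' outright. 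Consequently Theorem \ref{thmbf} is the conjunction of an always-true implication with the implication matching Theorem \ref{thmbf4}, whence Theorem \ref{thmbf} holds if and only if Theorem \ref{thmbf4} does.

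The main obstacle is the faithfulness of the dictionary, and the most delicate point is the passage from topological density in the quotient torus to the quantitative Diophantine inequality. I would take care to verify that the metric on $\mathrm{M}(n,m;\T)$ induced by $\R/2\pi\Z$ is exactly the ``$\bmod\,2\pi$'' distance of Theorem \ref{thmbf4}, that the identification of the $\spz$-action with right multiplication by integral symplectic matrices is the correct one (so that the admissible $K$ are precisely the symplectic ones, which is the sharpening over classical Kronecker), and that the constant $C$ and the arbitrary norm introduce no content beyond density. Once these normalisations are pinned down, the equivalence is a direct transcription.
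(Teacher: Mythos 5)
Your proof is correct and follows essentially the same route as the paper: identify the representation space with $\mathrm{M}\big(n,2g;\T\big)$ via $\rho\mapsto\Theta_\rho$, observe that density of the $\spz$-orbit of $\Theta_\rho$ is by definition the approximation statement $\big|\!\big|A-\Theta_\rho K\big|\!\big|<\varepsilon \bmod 2\pi$ for $K\in\spz$, and match the hypothesis on $B$ in Theorem \ref{thmbf4} with density of $\mathrm{im}\,\rho$ via Theorem \ref{codr}. You are in fact slightly more careful than the paper's own proof, which only establishes the correspondence with the forward implication of Theorem \ref{thmbf}; your observation that the reverse implication (orbit dense $\Rightarrow$ image dense) holds unconditionally, because $\overline{\mathrm{im}\,\rho}$ is invariant under precomposition by automorphisms, is precisely what is needed to make the stated biconditional equivalence airtight.
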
 

\subsection{Related dynamical problems}\label{rdp} For a generic compact Lie groups $G$, the main issues one has to face are mainly two. First of all, the group $G$ may not have a nice description. In fact, among all compact Lie groups we found the classical simple Lie groups belonging to the four families SU$(n+1)$, SO$(2n+1)$, Sp$(n)$, SO$(2n)$, but also the five exceptional Lie groups corresponding to the Dynkin diagrams G$_2$, F$_4$, E$_6$, E$_7$, E$_8$ which are harder to treat. The second issue comes from the fact that most of the representation spaces and their quotients \textnormal{Hom}$(\pi_1(S),G)/G$ have not an explicit description to work with. The abelian case is not the only one on which these issues vanish. Also in the case of SU$(2)$ they completely miss since both the group and the representation space were already well-known in the literature. For open surfaces of positive genus and positive number of boundary components, there is a further problem to be addressed. Suppose $S$ has genus $g$ with boundary of $k$ disjoint circles. A relative character variety is a slice of the space \textnormal{Hom}$(\pi_1(S),G)/G$ subject to the condition imposed by some finite collection of $k$ conjugacy classes. Like in the close case, the relative character variety carries a symplectic structure which is preserved by the mapping class group action. In \cite{PX2}, Pickrell-Xia established the ergodicity of the mapping class group action with respect to the symplectic measure for $k>2$. Their result may be seen as the follow up of the Goldman's work \cite{GO97}, where he considered the cases of groups whose simple factors are locally isomorphic to SU$(2)$. Very recently, in \cite{GLX}, Goldman-Lawton-Xia announced a proof in the case of SU$(3)$ based on different techniques than in \cite{PX2}.\\
\noindent As for the modular orbits characterisation, the main problem posed by Goldman has been answered only in the case of SU$(2)$, as mentioned above. This work is therefore a new and partial development of the wider program to understanding the dynamics of the mapping class group. In \cite{BKMS}, Biswas-Koberda-Mj-Santharoubane have consider the opposite problem of characterising representations having \emph{finite} modular orbit. For any fixed Lie group $G$, they have showed that any representation, with values in $G$, having finite modular orbit has necessarily finite image in $G$. The case $G=\slc$ has been handled also by Biswas-Gupta-Mj-Whang in \cite{BGMW}.

\smallskip

\noindent The non-compact case is even more complicated and delicate; let us spend a few words. The current situation is different for non-compact Lie groups and we do not expect an analog theorem. For compact Lie groups $G$, the space \textnormal{Hom}$(\pi_1(S),G)/G$ has non-trivial homotopy type, and Theorem \ref{thm3} says that the dynamics of the action of the mapping class is chaotic on each connected component. On the other hand, when $G$ is a non-compact semisimple Lie group, the space \textnormal{Hom}$(\pi_1(S),G)/G$ contains open contractible components on which the action of the mapping class group is properly discontinuous. Often, these components correspond to locally homogeneous structures uniformizing $S$. A remarkable case is that of $\pslr$. It is well-known that the $\pslr$-character variety has $4g-3$ connected components indexed by the Euler class, taking values in a finite set of $\mathbb{Z}$, where $g$ denotes the genus of $S$, see \cite{GO88}. Two of these components correspond to the Teichm\"uller spaces $\mathcal{T}(S)$ and $\mathcal{T}(\overline{S})$ of $S$ (where $\overline{S}$ is the surface $S$ taken with the opposite orientation). The action of Mod$(S)$ is known to be proper on these components and it is conjectured to be ergodic on the others. This conjecture is currently treated in the case of genus $2$ surfaces thanks to recent results of March\'e-Wolff. They proved the conjecture about ergodicity is true for Euler class equal to $\pm 1$ and decompose the connected component of Euler class $0$ in two subspace on which the mapping class group acts ergodically \cite{MW2, MW}. 
As it may easy to expect, even less is known about the topological dynamics of the mapping class group on the $\pslr$-character variety. In the case of genus two, one among of the consequences of March\'e and Wolff's result is the following claim: in each subspace of the character variety on which the action of the mapping class group is ergodic there is a full measure subset of representations whose mapping class group orbit is dense in this subspace. Like in the compact case, we can pose the following question: Does a dense representation $\rho:\pi_1(S)\to\pslr$ have dense Mod$(S)$-orbit? Answering to this question is even more tricky and for surfaces with boundary we already know counterexamples - see \cite{PX3} for an example in $\slc$. 

\subsection{Structure of the paper} The paper is organised as follow. In Section \S\ref{tcv} we begin with a description of the $\T^n$-character variety and then subsequently introduce the homological representation space and show the identification with the character variety. We finally describe the action of the symplectic group $\spz$ on the homological representation space. As a consequence, we shall derive the Proposition \ref{torga} and the equivalence of Theorems \ref{thmbf} and \ref{thmbf2}.  In Section \S\ref{dr} we shall give a complete characterisation of dense representations in the $n$-dimensional torus by proving Theorem \ref{codr}. In section \S\ref{ratthm} we shall finally derive our main Theorem \ref{thmbf}. In the last section, we prove Proposition \ref{aiffd} and indeed Theorem \ref{thmbf4} establishing the connection of our dynamical result with the Kronecker's Approximation Theorem. We finally conclude with a serie of appendix on which we shall discuss some further aspects related to our project. Appendix \S\ref{mtoht} we discuss about a direct approach to our problem which works for a fairly general class of representations. In Appendix \S\ref{swpc} we digress a little by providing a brief description of the relative $\T^n$-character variety for surfaces with one puncture and then we claim that our main results extend to one-punctured surfaces.

\subsection*{Acknowledgment} The first named author would like to thanks Olivier Guichard for his comments and guidance. The second named author would like to thank Anish Ghosh for introducing him in the nice theory of groups actions on homogeneous spaces and Ratner's theory. Part of this of work was carry out during the visiting of second named author to the University of Bologna in November 2019 and TIFR Mumbai in December 2019. He is grateful for the nice hospitality in both places. Both the authors wish to thank Subhojoy Gupta, Fanny Kassel and Maxime Wolff for their interest on this work. Both the authors finally wish to thank the organisers of the \emph{Conference on Geometric Structures in Nice} which took place in January 2019 where their collaboration began. Finally, we thank an anonymous referee for the helpful suggestions given to improve the presentation of this paper.

\section{$\Bbb T^n$-character variety}\label{tcv}

\noindent In this work we are interested in characterising the orbits of the Mod$(S)$-action on \textnormal{Hom}$(\pi_1(S),G)/G$ where $G$ is a compact, connected  and abelian Lie group. It is classical to see that any such a group is isomorphic to $\Bbb T^n$, the $n$-dimensional torus for some positive $n$ see for instance \cite[Corollary 3.7]{BtD}. The specific interest for the abelian case comes from its connection with abstract harmonic analysis, the geometry of numbers and the theory of group actions on homogeneous spaces (connections with Ratner's Theorem, see section \S\ref{ratthm}).

\medskip

\noindent In the introduction we have given a very brief view of the character variety for a generic compact Lie group $G$. In this section we specialise the discussion for compact and connected abelian Lie groups. From the Lie theory, any such a group is known to be a $n$-dimensional torus, namely the product of $n$ copies of the unit circle $\Bbb S^1$. In the present work $\Bbb S^1$ is seen as $\big\{ e^{i\theta}\,|\, \theta\in[0,2\pi)\big\}$ where $[0,2\pi)$ carries the quotient topology obtained identifying the boundary points of the closed interval $[0,2\pi]$. Consequently, the $n$-torus $\T^n$ is defined as $\big\{\big(e^{i\theta_1},\dots, e^{i\theta_n}\big)\,|\, \theta_i\in [0,2\pi), \text{ for any } i=1,\dots,n\big\}$ endowed with the product topology.

\smallskip

\noindent Let $S$ be a closed surface and let $\alpha_1,\beta_1,\dots,\alpha_g,\beta_g$ be any standard generating system of the fundamental group. The choice of a representation $\rho:\pi_1(S)\longrightarrow \T^n$ amounts to choose for each generator an element of $\T^n$ such that these elements satisfy the condition imposed by the presentation of the fundamental group of $S$. Since $\T^n$ is an abelian group, the relation $[A_1,\,B_1]\cdots[A_g,\,B_g]=1$ is automatically satisfied for any choice of $2g$ elements in $(A_1,B_1,\dots,A_g,B_g)\in\T^n$. Thus, the representation space can be identified with the full group $\big(\T^n\big)^{2g}\cong\T^{2ng}$. Even more, thanks again to the abelian property, the action of $\T^n$ on \textnormal{Hom}$\big(\pi_1(S),\T^n\big)$ by post-composition with inner automorphisms of $\T^n$ is trivial. As a consequence, the $\T^n$-character variety coincides with the representation space.


\subsection{Homological representations} Let $\mathrm{H}_1(S,\Z)$ be the first homology group. The close connection between the objects $\pi_1(S)$ and $\mathrm{H}_1(S,\Z)$ is well-known, indeed the latter is known to be isomorphic to the abelianization of $\pi_1(S)$. As we have seen above, the representation space $\textnormal{Hom}\big(\pi_1(S),\T^n\big)$ naturally identifies with the $2gn$-dimensional torus assigning to any representation $\rho$ the $2g$-tuple $\big(\rho(\alpha_1),\rho(\beta_1),\dots,\rho(\alpha_g),\rho(\beta_g)\big)$, where $\alpha_1,\beta_1,\dots,\alpha_g,\beta_g$ is a basis for $\pi_1(S)$. Every representation $\rho$ fails to be injective and its kernel \textnormal{ker}$(\rho)$ always contains the subgroup generated by the commutators since the target is abelian. Therefore, $\rho$ boils down to a representation
\[ \hrho:\homrep\cong\frac{\pi_1(S)}{\big[\pi_1(S),\pi_1(S)\big]}\longrightarrow \T^n,  \qquad \hrho\big(\big[\gamma\big]\big):=\rho(\gamma).
\]
\noindent In fact, let $\gamma\in\pi_1(S)$ and let $\big[\gamma\big]$ be its image via the canonical projection $p:\pi_1(S)\longrightarrow \mathrm{H}_1(S,\Z)$. Let $\gamma+\big[\sigma_1,\sigma_2\big]$ be a representative of $\big[\gamma\big]$. Since the following chain of equalities holds
\[ \rho\big(\gamma+\big[\sigma_1,\sigma_2\big]\big)=\rho\big(\gamma\big)\rho\big(\big[\sigma_1,\sigma_2\big]\big)=\rho\big(\gamma\big),
\] the representation $\hrho$ is well-defined and the image does not depend on the choice of the representative. Furthermore, the image of $\rho$ agrees with the image $\hrho$ by contruction.

\begin{defn}
We define the \emph{homological representation space} as the set $\textnormal{Hom}\big(\mathrm{H}_1(S,\Z),\T^n\big)$ of representations of $\mathrm{H}_1(S,\Z)$ in $\T^n$ endowed with the compact-open topology.
\end{defn}

\begin{lem}
The homological representation space $\emph{\textnormal{Hom}}\big(\mathrm{H}_1(S,\Z),\T^n\big)$ identifies with the $2gn$-dimensional torus $\T^{2gn}$.
\end{lem}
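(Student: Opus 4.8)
The plan is to reduce everything to the universal property of free abelian groups together with the fact that the compact-open topology on maps out of a discrete group is the topology of pointwise convergence. First I would recall that for a closed orientable surface of genus $g$ the first homology group is free abelian of rank $2g$; more precisely, if $\alpha_1,\beta_1,\dots,\alpha_g,\beta_g$ is the standard generating system of $\pi_1S$ fixed above, then their images under the abelianisation map $p\colon\pi_1S\to\homrep$ form a basis, so that $\homrep\cong\Z^{2g}$.

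Next I would exhibit the evaluation-at-generators map. Writing $e_1,\dots,e_{2g}$ for this basis, define
\[
\mathrm{ev}\colon \textsf{Hom}\big(\homrep,\T^n\big)\longrightarrow \big(\T^n\big)^{2g}, \qquad \hrho\longmapsto \big(\hrho(e_1),\dots,\hrho(e_{2g})\big).
\]
Since $\homrep$ is free abelian on $e_1,\dots,e_{2g}$, any choice of $2g$ elements of $\T^n$ extends uniquely to a homomorphism; this is exactly the statement that $\mathrm{ev}$ is a bijection, with inverse sending a tuple $(t_1,\dots,t_{2g})$ to the homomorphism $\sum_i k_i e_i\mapsto \prod_i t_i^{k_i}$. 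Identifying $\big(\T^n\big)^{2g}$ with $\T^{2gn}$ then gives the desired set-theoretic identification.

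The only point that requires care is that $\mathrm{ev}$ is a homeomorphism, and this is where I would concentrate the argument. Because $\homrep$ is discrete, its compact subsets are exactly the finite ones, so the compact-open topology on $\textsf{Hom}\big(\homrep,\T^n\big)$ coincides with the topology of pointwise convergence, that is, with the subspace topology inherited from the product $\big(\T^n\big)^{\homrep}$. Continuity of $\mathrm{ev}$ is then immediate, since each of its coordinates is evaluation at a single point. For continuity of the inverse one observes that evaluation $\hrho\mapsto\hrho(x)$ at an arbitrary $x=\sum_i k_i e_i$ factors as the composite of $\mathrm{ev}$ with the continuous map $(t_1,\dots,t_{2g})\mapsto \prod_i t_i^{k_i}$ on $\T^n$; hence the inverse of $\mathrm{ev}$ is continuous into each factor of the product, and therefore continuous. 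I expect the main (and essentially only) obstacle to be this verification that the two topologies agree; once it is settled the statement follows, and it simultaneously recovers the earlier identification $\textsf{Hom}\big(\pi_1S,\T^n\big)\cong\T^{2gn}$ through the bijection $\rho\mapsto\hrho$ already noted.
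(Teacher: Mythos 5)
Your proof is correct and follows essentially the same route as the paper: evaluation at a basis of $\homrep$ combined with the universal property of free abelian groups to get the bijection with $\big(\T^n\big)^{2g}\cong\T^{2gn}$. The paper's proof stops at the set-theoretic bijection, whereas you additionally verify that the evaluation map is a homeomorphism for the compact-open topology (which reduces to pointwise convergence since $\homrep$ is discrete) --- a detail the paper leaves implicit.
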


\begin{proof}
To any representation $\hrho$ we can assign the $2g$-tuple defined as $\big(\hrho([\alpha_1]),\hrho([\beta_1]),\dots,\hrho([\alpha_g]),\hrho([\beta_g])\big)$, where the collection $\big[\alpha_i\big],\big[\beta_i\big]$, $1\le i\le g$ is a fixed basis of the homology group $\mathrm{H}_1(S,\Z)$. Conversely, since $\Bbb T^{2gn}$ is an abelian group, for any $2g$-tuple of $\big(\T^{n}\big)^{2g}$, say $\big(v_1,w_1,\dots,v_g,w_g\big)$, the universal property of free abelian groups implies the existence of a unique group homomorphism from $\homrep$ into the $n$-torus $\T^n$ which sends $[\alpha_i]$ to $v_i$ and $[\beta_i]$ to $w_i$, for every $i=1,\dots,g$.
\end{proof}

\noindent The implications of this lemma are quite simple, but of crucial importance.  Upon choosing a basis for $\pi_1(S)$; the representation space $\textnormal{Hom}\big(\pi_1(S),\T^n\big)$ identifies with the homological representation space $\textnormal{Hom}\big(\mathrm{H}_1(S,\Z),\T^n\big)$ and the identification is explicitely given by the association $\rho\mapsto\hrho$. According to this property, we derive the following lemma.

\begin{lem}\label{reqhr}
Let $\rho_1,\rho_2:\pi_1(S)\longrightarrow\T^n$ be two representations. Then $\rho_1\equiv\rho_2$ if and only if $\hrho_1\equiv\hrho_2$.
\end{lem}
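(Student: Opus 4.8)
The plan is to deduce the statement directly from the explicit construction of the homological representation $\hrho$ recalled just above, namely the defining relation $\hrho([\gamma]) = \rho(\gamma)$ for every $\gamma\in\pi_1S$, together with the surjectivity of the abelianization map $p:\pi_1S\longrightarrow\homrep$. Both implications then amount to transporting an equality of maps across this relation, so no machinery beyond the construction is needed.

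For the forward implication I would simply evaluate the defining formula: if $\rho_1\equiv\rho_2$ as maps on $\pi_1S$, then for any homology class $[\gamma]\in\homrep$ one has $\hrho_1([\gamma])=\rho_1(\gamma)=\rho_2(\gamma)=\hrho_2([\gamma])$. Since every element of $\homrep$ is of the form $[\gamma]=p(\gamma)$ for some $\gamma\in\pi_1S$, this equality holds on the whole domain and hence $\hrho_1\equiv\hrho_2$.

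For the converse I would run the same computation in reverse. Given $\hrho_1\equiv\hrho_2$, pick an arbitrary $\gamma\in\pi_1S$ and use the relation in the form $\rho_i(\gamma)=\hrho_i\big(p(\gamma)\big)$; the hypothesis forces $\rho_1(\gamma)=\hrho_1\big(p(\gamma)\big)=\hrho_2\big(p(\gamma)\big)=\rho_2(\gamma)$, and as $\gamma$ was arbitrary we conclude $\rho_1\equiv\rho_2$.

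There is no genuine obstacle here, since the content of the lemma is precisely that the association $\rho\mapsto\hrho$ is a bijection, a fact already established by identifying both spaces with $\T^{2gn}$; indeed the two implications are exactly its injectivity and its well-definedness. The only point requiring a word of care is to phrase the argument through the surjectivity of $p$, so that equality of the homological representations is genuinely tested on \emph{every} element of $\homrep$ and thereby transfers back to equality on all of $\pi_1S$.
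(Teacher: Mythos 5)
Your proof is correct and follows essentially the same route as the paper's: both directions are read off from the defining relation $\hrho([\gamma])=\rho(\gamma)$ together with the surjectivity of the projection $p:\pi_1S\to\homrep$. You have merely spelled out in full what the paper dispatches as ``a matter of definitions.''
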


\begin{proof}
This is just a matter of definitions given so far. The necessary condition follows trivially. The sufficient condition follows from $\hrho\big(\big[\gamma\big]\big)=\rho(\gamma)$ for any $\gamma\in\pi_1(S)$. \qedhere
\end{proof}

\smallskip

\subsection{Actions of the symplectic group $\spz$} In this section we are going to describe the action of the symplectic group $\spz$ both on the representation space and on the homological representation space. 

\subsubsection{The symplectic group $\spz$.} We begin with recalling some standard notions. The algebraic intersection number 
\[ \cap: \mathrm{H}_1(S,\Z)\times \mathrm{H}_1(S,\Z)\longrightarrow \Z
\] extends uniquely to a nondegenerate, alternating bilinear map
\[ \cap: \mathrm{H}_1(S,\R)\times \mathrm{H}_1(S,\R)\longrightarrow \R
\] which realises $\mathrm{H}_1(S,\R)$ as a symplectic vector space. 

\begin{defn}
A collection of elements $\big[\alpha_i\big],\big[\beta_i\big]$, $1\le i\le g$ of $\mathrm{H}_1(S,\Z)< \mathrm{H}_1(S,\R)$ such that
\[ \big[\alpha_i\big]\cap \big[\alpha_j\big]=\big[\beta_i\big]\cap \big[\beta_j\big]=0, \quad \big[\alpha_i\big]\cap \big[\beta_j\big]=\delta_{ij}
\] for all $i,j$ with $1\le i,j\le g$ is called a \emph{symplectic basis of the group} $\mathrm{H}_1(S,\Z)$ or a basis for the symplectic vector space $\big(\mathrm{H}_1(S,\Z),\,\cap\,\big)$. We define a collection of curves $\alpha_i,\beta_i$ such that $\big\{[\alpha_i],[\beta_i]\big\}$ is a symplectic basis as \emph{geometric symplectic basis} for $\pi_1(S)$.
\end{defn}

\noindent The matrix associated to the antisymmetric bilinear form $\cap$ on the basis $\big[\alpha_i\big],\big[\beta_i\big]$ is the $2g\times 2g$ blockwise diagonal matrix
\[ J=
\begin{pmatrix}
J_o & & \\
 & \ddots & \\
  & & J_o
\end{pmatrix} \quad \text{ with } \quad J_o=
\begin{pmatrix}
0 & 1\\
-1 & 0
\end{pmatrix}.\\
\] 
\text{}\\
\noindent The symplectic linear group Sp$(2g,\R)$ is defined as the group of invertible matrices $A$ satisfying the relation $AJA^{t}=J$ and we denote by $\spz$ the subgroup of those matrices with integer coefficients.

\begin{rmk}\label{specsubgroup}
Here, the symplectic group $\spr$ is the subgroup of $\slrr$ of matrices preserving the alternating $2$-form $\omega=e_1\wedge e_2+\cdots+e_{2g-1}\wedge e_{2g}$. $\spr$ contains the $g$-times product $\slr\times \cdots\times \slr$ as a proper subgroup. In turns, the group $\spz$ contains the $g$-times product $\slz\times\cdots\times\slz$ as a proper subgroup. This property will be useful in the sequel, see Appendix \S\ref{mtoht}.
\end{rmk}

\noindent An orientation preserving homeomorphism induces an isomorphism in homology which preserves the intersection form $\cap$ defined above. Since isotopic homeomorphisms induce the same map in homology, there is a representation 
\[ \mu:\text{Mod}(S)\longrightarrow \text{Aut}^+\big(\mathrm{H}_1(S,\Z)\big)\cong \text{SL}(2g,\Z).
\]  As each homeomorphism preserves the intersection form $\cap$, the image of $\mu$ lies also inside Sp$(2g,\R)$. Therefore the image of $\mu$ lies inside SL$(2g,\Z)$ $\cap$ Sp$(2g,\R)=\spz$. The representation $\mu:\text{Mod}(S)\longrightarrow\spz$ $-$ usually called symplectic representation of Mod$(S)$ $-$ is surjective with kernel $\mathcal{I}(S)$. The subgroup $\mathcal{I}(S)$ is called \emph{the Torelli subgroup of }Mod$(S)$.

\begin{rmk}
In the genus one case the Torelli subgroup is trivial. Indeed, Mod$(T)\cong\slz\cong \spzz$.
\end{rmk}


\subsubsection{Comparison of the \textnormal{Mod}$(S)$-orbits with the $\spz$-orbits.} 
We now consider the effect of changing the basis of $\mathrm{H}_1(S,\Z)$ pre-composing any homological representation with an automorphism $\phi\in\textnormal{Aut}^+\big(\mathrm{H}_1(S,\Z)\big)$ such that any representation $\hrho$ is sent to $\hrho\circ\phi^{-1}$. We can, therefore, consider $\slzz$-action on the space $\textnormal{Hom}\big(\mathrm{H}_1(S,\Z),\T^n\big)$. Of course, this action restricts to an action of the symplectic group $\spz$ on the same space. We are interested in studying the $\spz$-orbits in the homological representation space. The main goal of this section is proving the following claim.

\begin{prop}\label{orbcomp}
Let $\rho_1,\rho_2:\pi_1(S)\longrightarrow \T^n$ be two representations and let $\hrho_1,\hrho_2:\mathrm{H}_1(S,\Z)\longrightarrow \T^n$ be the induced representations. Suppose there is $\phi\in \text{\emph{Mod}}(S)$ such that $\rho_2=\rho_1\circ\phi$. Then $\hrho_2=\hrho_1\circ\mu(\phi)$, where $\mu$ is the symplectic representation of \emph{Mod}$(S)$.
\end{prop}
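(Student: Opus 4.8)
The plan is to reduce the statement to the naturality of abelianization combined with the defining formula $\hrho([\gamma])=\rho(\gamma)$. The central object is the canonical projection $p:\pi_1S\longrightarrow\homrep$. Regarding $\phi\in\text{Mod}(S)$ as an automorphism of $\pi_1S$ via the Dehn--Nielsen--Baer theorem, the map $\mu(\phi)$ is by construction the automorphism that $\phi$ induces on $\homrep\cong\pi_1S/[\pi_1S,\pi_1S]$. Hence the first step I would take is to record the commuting square expressing this compatibility, namely $p\circ\phi=\mu(\phi)\circ p$ as maps $\pi_1S\longrightarrow\homrep$, which is nothing more than functoriality of the quotient by the commutator subgroup.

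Before exploiting the square I would dispose of a well-definedness point, since $\phi$ is genuinely only an outer automorphism class and the hypothesis $\rho_2=\rho_1\circ\phi$ must be read accordingly. If two representatives of the class differ by an inner automorphism $c_h\colon x\mapsto hxh^{-1}$, then $p\circ c_h=p$ because $p$ annihilates commutators, and likewise $\rho_1\circ c_h=\rho_1$ because $\T^n$ is abelian. Thus both $\mu(\phi)$ and the composite $\rho_1\circ\phi$ are independent of the chosen representative, so the hypothesis is unambiguous; this is exactly the spot where the abelian hypothesis enters.

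With these preliminaries the argument is a direct diagram chase. Fix a class $[\gamma]\in\homrep$ and lift it to some $\gamma\in\pi_1S$ with $p(\gamma)=[\gamma]$. On one side the defining formula gives $\hrho_2([\gamma])=\rho_2(\gamma)=\rho_1\big(\phi(\gamma)\big)$. On the other side, using the commuting square, $(\hrho_1\circ\mu(\phi))([\gamma])=\hrho_1\big(\mu(\phi)(p(\gamma))\big)=\hrho_1\big(p(\phi(\gamma))\big)$, and the defining formula for $\hrho_1$ rewrites this as $\rho_1\big(\phi(\gamma)\big)$. The two computations coincide for every $[\gamma]$, whence $\hrho_2=\hrho_1\circ\mu(\phi)$ on all of $\homrep$.

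I do not anticipate a real obstacle: the computation is forced once the commuting square is in place. The only delicate point is conceptual rather than technical, namely the passage from the mapping class $\phi$ to an honest automorphism of $\pi_1S$ and the attendant independence from the choice of representative, which is precisely where I would take care. Everything else is routine naturality.
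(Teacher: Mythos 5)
Your proof is correct and follows essentially the same route as the paper: both rest on the commuting square $p\circ\phi=\mu(\phi)\circ p$ coming from functoriality of abelianization, followed by the evaluation $\hrho_2([\gamma])=\rho_1(\phi(\gamma))=\hrho_1(\mu(\phi)([\gamma]))$. Your explicit check that inner automorphisms act trivially on both sides (so the hypothesis is unambiguous for an outer class) is a small point the paper handles only implicitly, but it does not change the argument.
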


\begin{proof}
Let $\phi:\pi_1(S) \longrightarrow \pi_1(S)$ be any element of $\out$. As the image of any commutator is also a commutator, the mapping $\phi$ boils down to a isomorphism in homology $\mu(\phi):\homrep\longrightarrow \homrep$. Two mappings $\phi_1$ and $\phi_2$ boil down to the same isomorphism in homology if and only if $\phi_2\circ\phi_1^{-1}$ descends to the identity map in homology, that is $\phi_2\circ\phi_1^{-1}$ is an element of the Torelli subgroup by a Theorem of Johnson, \cite{JD}. Therefore, the association $\phi\longmapsto \mu(\phi)$ defines the symplectic representation $\mu$ seen above. Look at now the following commutative diagram

\begin{equation}\label{diag}
\begin{tikzcd}
  \mathrm{H}_1(S,\Z) \arrow{rr}[]{\hrho_2=\overline{\rho_1\circ\phi}}
    &   & \T^n\\
  \pi_1(S) \arrow{u}{p} \arrow{d}{p} \arrow{r}{\phi}
& \pi_1(S)  \arrow{d} \arrow{r}{\rho_1} & \T^n \arrow{u}{\text{id}}\arrow{d}{\text{id}}\arrow{d}\\
\mathrm{H}_1(S,\Z) \arrow{r}[]{\mu(\phi)}
    & \homrep \arrow{r}[]{\hrho_1}  & \T^n
\end{tikzcd}
\end{equation}

\text{}\\
where $p$ is the canonical projection. As $\rho_2=\rho_1\circ \phi$ by assumption, it turns out $\hrho_2=\overline{\rho_1\circ\phi}=\hrho_1\circ\mu(\phi)$ as desired.\qedhere
\end{proof}


\subsubsection{Direct consequences.} Proposition \ref{orbcomp} leads to some interesting consequences that we are going to show. The first one concerns the action of the Torelli subgroup $\mathcal{I}(S)$ on the representation space $\textnormal{Hom}\big(\pi_1(S),\T^n\big)$.

\begin{prop}\label{torga}
The action of the Torelli group $\mathcal{I}(S)$ on the representation space $\textnormal{\emph{Hom}}\big(\pi_1(S), \T^n\big)$ is trivial.
\end{prop}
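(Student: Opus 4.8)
The statement will follow immediately by combining Proposition \ref{orbcomp} with Lemma \ref{reqhr}, so my plan is essentially to assemble those two ingredients. Recall that, by definition, the Torelli subgroup $\mathcal{I}(S)$ is precisely the kernel of the symplectic representation $\mu:\text{Mod}(S)\longrightarrow\spz$ established above. Hence the first step is simply to record that every $\phi\in\mathcal{I}(S)$ satisfies $\mu(\phi)=\text{id}$, the identity automorphism of $\homrep$.

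Next I would fix an arbitrary representation $\rho:\pi_1S\longrightarrow\T^n$ and let $\phi\in\mathcal{I}(S)$ act on it, producing $\rho':=\rho\circ\phi$ (the same reasoning applies to $\rho\circ\phi^{-1}$: since $\mathcal{I}(S)$ is a subgroup and $\mu$ a homomorphism, $\mu(\phi^{-1})=\text{id}$ as well, so the choice of action convention is immaterial). By Proposition \ref{orbcomp}, the induced homological representation is $\hrho'=\hrho\circ\mu(\phi)=\hrho\circ\text{id}=\hrho$. Thus $\rho'$ and $\rho$ induce the very same homological representation.

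Finally, I would invoke Lemma \ref{reqhr}, which asserts that two representations into $\T^n$ coincide if and only if their induced homological representations coincide. Applying it to $\rho'$ and $\rho$, and using $\hrho'\equiv\hrho$ from the previous step, one concludes $\rho'\equiv\rho$. Since both $\rho$ and $\phi$ were arbitrary, every element of $\mathcal{I}(S)$ fixes every point of $\textsf{Hom}(\pi_1S,\T^n)$, which is exactly the triviality of the $\mathcal{I}(S)$-action.

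There is no genuine obstacle here: the substantive content has already been packaged into Proposition \ref{orbcomp}, which converts the mapping class action on $\rho$ into the symplectic action $\mu(\phi)$ on $\hrho$, and into Lemma \ref{reqhr}, which records that in the abelian setting passing to homology loses no information. The only points worth double-checking are the bookkeeping of the action convention and the fact that $\mu$ is defined on all of $\text{Mod}(S)$ with kernel exactly $\mathcal{I}(S)$; both have been set up in the preceding subsection, so the argument is a short and direct corollary.
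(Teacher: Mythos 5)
Your argument is correct and is essentially identical to the paper's own proof: both deduce $\hrho'=\hrho$ from Proposition \ref{orbcomp} using $\mu(\phi)=\mathrm{id}$ for $\phi\in\mathcal{I}(S)$, and then conclude $\rho'=\rho$ via Lemma \ref{reqhr}. Your extra remark on the action convention is harmless bookkeeping and does not change the substance.
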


\begin{proof}
Let $\rho_1\in \textnormal{Hom}\big(\pi_1(S), \T^n\big)$ be any representation and let $\phi\in\mathcal{I}(S)$. Set $\rho_2=\phi\cdot\rho_1=\rho_1\circ\phi^{-1}$. Proposition \ref{orbcomp} implies that $\hrho_1=\hrho_2$ because $\mu(\phi)=1$. We now invoke Lemma \ref{reqhr} to conclude $\rho_1=\rho_2$, namely the action of $\phi$ is trivial.
\end{proof}

\begin{rmk}
An alternative argument is the following. Let $\gamma$ and $\gamma'$ two isotopy classes of simple closed non separating curves. In \cite{JD}, Johnson noticed that $\gamma$ and $\gamma'$ are $\mathcal{I}(S)$-equivalent if and only if they represent the same element in $\mathrm{H}_1(S,\Z)$. Fix a basis for $\pi_1(S)$ of simple closed non-separating curves, and let $\rho_1,\rho_2$ two $\mathcal{I}(S)$-equivalent representations, that is $\rho_2=\rho_1\circ\phi$. For any generator $\gamma$ we have the following chain of equalities
\[ \rho_2\big(\gamma\big)=\hrho_2\big([\gamma]\big)=\overline{\rho_1\circ\phi^{-1}}\big([\gamma]\big)=\hrho_1\big(\phi^{-1}([\gamma])\big)=\hrho_1\big([\gamma]\big)=\rho_1(\gamma),
\] that imply $\rho_1=\rho_2$ as desired.
\end{rmk}

\noindent The $n$-torus $\T^n$ is a compact and connected Lie group and hence mapping class group Mod$(S)$ acts ergodically on the representation space - see Theorem \ref{thm3} in the introduction. As the action of the Torelli subgroup $\mathcal{I}(S)$ is trivial, the action of the quotient group is also well-defined and the following holds.

\begin{prop}\label{symacts}
The action of $\spz\cong\displaystyle\frac{\textnormal{Mod}(S)}{\mathcal{I}(S)}$ on $\textnormal{\emph{Hom}}\big(\pi_1(S), \T^n\big)$ is ergodic with respect to the finite measure $\mu_S$.
\end{prop}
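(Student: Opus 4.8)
The plan is to deduce Proposition~\ref{symacts} directly from the two facts already established, namely the Goldman--Pickrell-Xia ergodicity of the $\mathrm{Mod}(S)$-action (Theorem~\ref{thm3} in the introduction) together with the triviality of the Torelli action (Proposition~\ref{torga}). The key observation is that, since $\mathcal{I}(S)$ acts trivially on $\textsf{Hom}\big(\pi_1S,\T^n\big)$, the $\mathrm{Mod}(S)$-action factors through the quotient $\mathrm{Mod}(S)/\mathcal{I}(S)$, which is isomorphic to $\spz$ via the symplectic representation $\mu$. Thus the $\spz$-action is well-defined and, set-theoretically, produces exactly the same orbits as the $\mathrm{Mod}(S)$-action.

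First I would make the factorisation precise: for $\phi\in\mathrm{Mod}(S)$, the transformation $\rho\mapsto\rho\circ\phi^{-1}$ of the representation space depends only on the class $\mu(\phi)\in\spz$. This is exactly the content of Proposition~\ref{orbcomp} combined with Lemma~\ref{reqhr}: if $\mu(\phi_1)=\mu(\phi_2)$, then $\phi_2\phi_1^{-1}\in\mathcal{I}(S)$ acts trivially by Proposition~\ref{torga}, so $\phi_1$ and $\phi_2$ induce the same map on $\textsf{Hom}\big(\pi_1S,\T^n\big)$. Since $\mu$ is surjective onto $\spz$, every symplectic matrix arises from some mapping class, and the assignment $K\mapsto(\rho\mapsto\rho\circ K^{-1})$ gives a genuine $\spz$-action whose orbits coincide with the $\mathrm{Mod}(S)$-orbits.

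Next I would transport ergodicity across this factorisation. Ergodicity is a purely measure-theoretic statement about invariant sets: a measurable set $E$ is $\spz$-invariant precisely when it is $\mathrm{Mod}(S)$-invariant, because the two groups act through the same collection of measurable transformations on the measure space $\big(\textsf{Hom}\big(\pi_1S,\T^n\big),\mu_S\big)$. Therefore any $\spz$-invariant measurable set is $\mathrm{Mod}(S)$-invariant, and Theorem~\ref{thm3} forces it to have $\mu_S$-measure $0$ or full measure on each connected component. This is exactly ergodicity of the $\spz$-action, so the conclusion follows.

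The main point to be careful about $-$ rather than a genuine obstacle $-$ is verifying that the $\spz$-action is well-defined as a group action in its own right (respecting composition and identity), not merely that the individual transformations agree; but this is immediate once one knows $\mu$ is a surjective homomorphism with kernel acting trivially, so that $\mathrm{Mod}(S)/\mathcal{I}(S)\cong\spz$ acts by the induced transformations. One should also note the harmless subtlety that the representation space is disconnected, so ergodicity is asserted componentwise; since both group actions preserve the same connected components and the same measure $\mu_S$, the componentwise statement transfers without change.
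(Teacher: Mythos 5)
Your argument is exactly the paper's: the authors deduce Proposition~\ref{symacts} in one line from Theorem~\ref{thm3} (ergodicity of the $\mathrm{Mod}(S)$-action) together with Proposition~\ref{torga} (triviality of the Torelli action), so that the action factors through $\mathrm{Mod}(S)/\mathcal{I}(S)\cong\spz$ with the same orbits and hence the same invariant sets. One tiny inaccuracy in your closing remark: here $\textsf{Hom}\big(\pi_1S,\T^n\big)\cong\T^{2ng}$ is connected, so the componentwise caveat is vacuous, but this does not affect the argument.
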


\noindent As the homological representation spaces identifies with the representation space, it also carries a finite measure. Calling $\imath$ the identifying map, this finite measure can be seen as the pull-back measure $\imath^*\mu_S$, where $\mu_S$ is the finite measure carried by the representation space. 

\begin{cor}
The action of the symplectic group $\spz$ on the space $\textnormal{\emph{Hom}}\big(\homrep, \T^n\big)$ is ergodic with respect to the finite measure $\imath^*\mu_S$.
\end{cor}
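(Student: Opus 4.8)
The plan is to obtain this as a formal consequence of Proposition~\ref{symacts} by transporting structure along the identifying map $\imath$. The only ingredient needed beyond Proposition~\ref{symacts} is that $\imath$ is an \emph{equivariant} isomorphism of measure spaces; once this is in hand, ergodicity --- being invariant under measurable conjugacy of measure-preserving actions --- passes automatically from the representation space to the homological representation space.

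First I would recall that the assignment $\rho\mapsto\hrho$ is a bijection between $\textsf{Hom}\big(\pi_1S,\T^n\big)$ and $\textsf{Hom}\big(\homrep,\T^n\big)$; in fact it is a homeomorphism, since both spaces have been identified with $\T^{2gn}$ compatibly with this assignment. Writing $\imath$ for the resulting identification, the measure $\imath^*\mu_S$ is by definition the transport of $\mu_S$, so $\imath$ is tautologically a measure-space isomorphism and there is nothing to check on this point.

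The substantive step is equivariance. On the homological side $\spz$ acts by precomposition, $\Phi\cdot\hrho=\hrho\circ\Phi^{-1}$, whereas on the representation space it acts through the residual $\text{Mod}(S)/\mathcal{I}(S)\cong\spz$ action of Proposition~\ref{symacts}. For $\Phi\in\spz$ one chooses $\phi\in\text{Mod}(S)$ with $\mu(\phi)=\Phi$; triviality of the Torelli action (Proposition~\ref{torga}) makes $\Phi\cdot\rho:=\rho\circ\phi^{-1}$ well defined independently of the lift. Applying Proposition~\ref{orbcomp} to $\phi^{-1}$ yields $\overline{\rho\circ\phi^{-1}}=\hrho\circ\mu(\phi^{-1})=\hrho\circ\Phi^{-1}$, that is $\overline{\Phi\cdot\rho}=\Phi\cdot\hrho$. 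Hence the map $\rho\mapsto\hrho$ intertwines the two $\spz$-actions, and $\imath$ conjugates one action to the other.

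Finally, since $\imath$ is an equivariant measure-space isomorphism and the $\spz$-action on $\big(\textsf{Hom}(\pi_1S,\T^n),\mu_S\big)$ is ergodic by Proposition~\ref{symacts}, the $\spz$-action on $\big(\textsf{Hom}(\homrep,\T^n),\imath^*\mu_S\big)$ is ergodic as well: any $\spz$-invariant measurable subset on the homological side corresponds under $\imath$ to an $\spz$-invariant set of the same measure on the representation side, which must therefore be null or conull. I do not anticipate a genuine obstacle here; the only care required is in bookkeeping the direction of $\imath$ and the matching of measures, and in ensuring that the transported action is the honest $\spz$-action rather than the full $\text{Mod}(S)$-action --- both of which are already guaranteed by Propositions~\ref{torga} and~\ref{orbcomp}.
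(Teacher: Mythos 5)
Your argument is correct and follows exactly the route the paper intends: the corollary is stated there as an immediate consequence of Proposition~\ref{symacts} via the identification $\imath$, with the measure $\imath^*\mu_S$ defined precisely so that $\imath$ is a measure-space isomorphism. Your proposal merely makes explicit the equivariance check (via Propositions~\ref{torga} and~\ref{orbcomp}) that the paper leaves implicit, so there is no substantive difference.
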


\noindent As a final consequence we have the following characterisation.

\begin{prop}
Let $\rho:\pi_1(S)\longrightarrow\T^n$ be a representation and let $\hrho:\homrep\longrightarrow \T^n$ the homological representation induced by $\rho$. Then the mapping class group orbit \textnormal{Mod}$(S)\cdot\rho$ is dense if and only if the symplectic group orbit $\spz\cdot\hrho$ is dense.
\end{prop}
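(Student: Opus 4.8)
The final statement to prove is the following:

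\begin{propnn}
Let $\rho:\pi_1S\longrightarrow\T^n$ be a representation and let $\hrho:\homrep\longrightarrow \T^n$ the homological representation induced by $\rho$. Then the mapping class group orbit \textnormal{Mod}$(S)\cdot\rho$ is dense if and only if the symplectic group orbit $\spz\cdot\hrho$ is dense.
\end{propnn}

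\noindent The plan is to show that the identification map $\imath:\textsf{Hom}(\pi_1S,\T^n)\to\textsf{Hom}(\homrep,\T^n)$ carries the $\mathrm{Mod}(S)$-orbit of $\rho$ onto the $\spz$-orbit of $\hrho$, after which density transfers across because $\imath$ is a homeomorphism. Concretely, I would first recall that $\imath$ sends $\rho\mapsto\hrho$, and that by the earlier lemmas this is a bijection between the two spaces; since both are identified with $\T^{2gn}$ and the assignment $\rho\mapsto\hrho$ is given coordinatewise by $\hrho([\alpha_i])=\rho(\alpha_i)$, $\hrho([\beta_i])=\rho(\beta_i)$, the map $\imath$ is visibly continuous with continuous inverse, hence a homeomorphism. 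This is the purely topological half of the argument and is routine.

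\noindent The substantive step is the set-theoretic equality of orbits under $\imath$. I would prove
\[
\imath\big(\mathrm{Mod}(S)\cdot\rho\big)=\spz\cdot\hrho.
\]
For the inclusion $\subseteq$, take an arbitrary point $\imath(\phi\cdot\rho)$ with $\phi\in\mathrm{Mod}(S)$; writing $\rho_2=\phi\cdot\rho=\rho\circ\phi^{-1}$, Proposition \ref{orbcomp} gives $\hrho_2=\hrho\circ\mu(\phi^{-1})=\mu(\phi)^{-1}\cdot\hrho$, which lies in $\spz\cdot\hrho$ because $\mu(\phi)\in\spz$. For the reverse inclusion $\supseteq$, take $K\cdot\hrho$ with $K\in\spz$; since the symplectic representation $\mu:\mathrm{Mod}(S)\to\spz$ is surjective (as recalled in the paragraph preceding the Torelli subgroup discussion), choose $\phi\in\mathrm{Mod}(S)$ with $\mu(\phi)=K^{-1}$, and then Proposition \ref{orbcomp} again identifies $\overline{\rho\circ\phi}=\hrho\circ\mu(\phi)=K\cdot\hrho$, so $K\cdot\hrho=\imath(\phi\cdot\rho)$ lies in $\imath(\mathrm{Mod}(S)\cdot\rho)$. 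Surjectivity of $\mu$ is exactly what makes the two orbits coincide rather than one merely containing the other, and it is the key input I would flag.

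\noindent With the orbit equality in hand, the conclusion is immediate: a homeomorphism maps dense sets to dense sets and reflects density under its inverse, so $\mathrm{Mod}(S)\cdot\rho$ is dense in $\textsf{Hom}(\pi_1S,\T^n)$ precisely when its image $\imath(\mathrm{Mod}(S)\cdot\rho)=\spz\cdot\hrho$ is dense in $\textsf{Hom}(\homrep,\T^n)$. The only point requiring care, and the one I expect to be the main obstacle, is bookkeeping the inverses consistently between the left action $\phi\cdot\rho=\rho\circ\phi^{-1}$ and the precomposition statement of Proposition \ref{orbcomp} (which is phrased with $\rho_2=\rho_1\circ\phi$), so that the direction of $\mu(\phi)$ versus $\mu(\phi)^{-1}$ comes out correctly; since $\mu$ is a group homomorphism and $\spz$ is a group, these inverses stay inside $\spz$ and the orbit equality is unaffected, but the signs must be tracked to avoid a spurious mismatch.
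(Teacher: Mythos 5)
Your proof is correct and follows the same route as the paper: the paper's own argument simply cites Proposition \ref{orbcomp} to identify the two orbits under the homeomorphism $\rho\mapsto\hrho$ and concludes that density transfers. You usefully make explicit the surjectivity of $\mu$ needed for the reverse inclusion, which the paper leaves implicit but has indeed established earlier.
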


\begin{proof}
Proposition \ref{orbcomp} implies that the mapping class group orbit of $\rho$ coincides with the symplectic group orbit of $\hrho$ via the identification $\rho\mapsto\hrho$. Therefore, one orbit is dense if and only if the other is dense.
\end{proof}

\begin{cor}\label{aiffb}
Let $S$ be a surface of genus $g\ge1$. Then Theorem \ref{thmbf} holds if and only if Theorem \ref{thmbf2} holds.\\
\end{cor}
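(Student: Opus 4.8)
The plan is to observe that Corollary \ref{aiffb} is a purely formal consequence of the dictionary assembled in this section and requires no new geometric input. Concretely, I would show that under the canonical bijection $\rho\mapsto\hrho$ between the representation space $\textsf{Hom}\big(\pi_1S,\T^n\big)$ and the homological representation space $\textsf{Hom}\big(\homrep,\T^n\big)$, the biconditional asserted by Theorem \ref{thmbf} for a representation $\rho$ is \emph{verbatim} the biconditional asserted by Theorem \ref{thmbf2} for its induced representation $\hrho$. Since the assignment $\rho\mapsto\hrho$ is a bijection, the universally quantified statement of one theorem then holds precisely when that of the other does.

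The argument rests on three facts already recorded above. First, by Lemma \ref{reqhr} together with the identification preceding it, the map $\rho\mapsto\hrho$ is a bijection of the two spaces, so quantifying over all $\rho$ is the same as quantifying over all $\hrho$. Second, by construction the image of $\rho$ in $\T^n$ coincides with the image of $\hrho$, so the hypothesis ``$\rho$ is dense'' is literally the same condition in the two statements. Third, the Proposition immediately preceding this corollary asserts that $\mathrm{Mod}(S)\cdot\rho$ is dense in the representation space if and only if $\spz\cdot\hrho$ is dense in the homological representation space.

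Assembling these, I would fix a representation $\rho$ together with its induced $\hrho$. Density of the image is a common hypothesis by the second fact, while the two orbit-density conclusions are equivalent by the third. Substituting one equivalent conclusion for the other turns the statement ``image of $\rho$ dense $\Longleftrightarrow\mathrm{Mod}(S)\cdot\rho$ dense'' into ``image of $\rho$ dense $\Longleftrightarrow\spz\cdot\hrho$ dense'', so the instance of Theorem \ref{thmbf} at $\rho$ is logically equivalent to the instance of Theorem \ref{thmbf2} at $\hrho$. Running this equivalence through the bijection of the first fact, and using that ``$P\Leftrightarrow Q$'' and ``$P\Leftrightarrow R$'' are interchangeable whenever $Q\Leftrightarrow R$ holds, yields the claim.

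There is essentially no obstacle here: all of the substantive work has been front-loaded into Proposition \ref{orbcomp} and its corollaries. The only point demanding care is bookkeeping, namely that the symbol $\rho$ occurring in the hypothesis of Theorem \ref{thmbf2} denotes the unique representation inducing the given $\hrho$, so that the density hypotheses of the two theorems genuinely coincide rather than merely correspond under the identification; this is guaranteed by the equality of images noted above.
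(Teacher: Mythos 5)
Your argument is correct and is essentially the one the paper intends: the corollary is stated there without proof as an immediate consequence of the preceding proposition, together with the bijection $\rho\mapsto\hrho$ and the equality of images of $\rho$ and $\hrho$, which are exactly the three facts you assemble. Your write-up simply makes explicit the bookkeeping the paper leaves implicit.
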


\subsection{The matrix presentation}\label{matrep} The $n$-torus $\T^n$ is also seen as the quotient of $\R^n$ by the action of the lattice $2\pi\Z^n$, indeed the exponential map provides an identification between $\R^n/2\pi\Z^n$ and the $n$-torus described above. We shall define the map
\begin{equation}\label{projexp}
 \textnormal{exp}: \R^n\longrightarrow \T^n \qquad \big(\theta_1,\dots,\theta_n\big)\longmapsto \big(e^{i\theta_1},\dots, e^{i\theta_n}\big)
\end{equation}
\noindent  as the \emph{canonical projection}. In the sequel it turns out also useful to look at the $n$-torus as the quotient of $\R^n$ with a suitable lattice $\Lambda=g\cdot \big(2\pi\Z^n\big)$ where $g\in \text{SL}(n,\Z)$. The reason of that will be discussed afterwards. We define $2\pi\Z^n$ as the \emph{the standard lattice} $-$ notice that this lattice is $2\pi$ times the usual standard lattice.

\smallskip

\noindent Fix a set of generators $\big\{\alpha_1,\beta_1,\dots,\alpha_g,\beta_g\big\}$ and let us consider $\T^n$ as the quotient of $\R^n$ by the action of the standard lattice. Let $\rho:\pi_1(S)\longrightarrow \T^n$ be any representations and set
\[ \rho(\alpha_i)=\big(e^{i\theta_{1,2i-1}},\dots,e^{i\theta_{n,2i-1}}\big)
\]
\[ \rho(\beta_i)=\big(e^{i\theta_{1,2i}},\dots,e^{i\theta_{n,2i}}\big)
\] for any $i=1,\dots,n$. The elements $\rho(\alpha_1),\rho(\beta_1),\dots,\rho(\alpha_g),\rho(\beta_g)$ generate the image of the representation $\rho$. Any generic element $\gamma\in\pi_1(S)$ may be seen as a word in the letters $\alpha_i,\beta_i$ for $i=1,\dots, 2g$. Hence, since $\Bbb T^{2gn}$ is abelian,
\begin{equation}
    \rho(\gamma)=\rho\big(w(\alpha_1,\beta_1,\dots\alpha_g,\beta_g)\big)=\rho(\alpha_1)^{k_1} \cdots \rho(\beta_g)^{k_{2g}}
\end{equation}
\noindent for some $k_1,\dots,k_{2g}\in \Z$. In particular, the element $\rho(\gamma)$ can be computed with the following matrix multiplication
\[
\begin{pmatrix}
\theta_{1,1} & \cdots & \theta_{1,i} & \cdots & \theta_{1,2g}\\
\vdots & & & & \vdots\\
\theta_{n,1} & \cdots & \theta_{n,i} & \cdots & \theta_{n,2g}
\end{pmatrix}
\begin{pmatrix}
k_1\\
 \vdots\\
 k_i\\
 \vdots\\
 k_{2g}
\end{pmatrix}.
\]

\begin{defn}\label{mr}
Let $\Theta_\rho$ be the matrix having as entries the values $\theta_{i,j}\in[0,2\pi)$ with $i=1,\dots,n$ and $j=1,\dots,2g$. We define $\Theta_\rho$ as \emph{the matrix associated to $\rho$} with respect the basis $\big\{\alpha_1,\beta_1,\dots,\alpha_g,\beta_g\big\}$ and the standard lattice $2\pi\Z^n$.
\end{defn}

\noindent In what follows, we shall often identify a representation $\rho$ with its associated matrix. Let us briefly see the reason why we are legitimated to do that. Consider the topological vector space $\text{M}\big(n,2g;\R\big)$ and introduce an equivalence relation where $A\sim B$ if and only if $A-B=2\pi H\in\text{M}\big(n,2g;2\pi \Z\big)$. The mapping $\imath$ associating to any $\rho$ its associated matrix $\Theta_\rho$ provides an homeomorphism between the representation space \textnormal{Hom}$\big(\pi_1(S),\T^n\big)$ and the quotient space $\text{M}\big(n,2g,\T\big)$. Moreover, the post-composition of the mapping $\rho\mapsto \overline{\rho}$ with $\imath^{-1}$ defines a homeomorphism between the spaces \textnormal{Hom}$\big(\mathrm{H}_1(S,\Z),\T^n\big)$ and $\text{M}\big(n,2g,\T\big)$.\\

\noindent Given a representation $\rho$, the matrix $\Theta_\rho$ depends on the choice of a set of generators for $\pi_1(S)$ and also on the choice of a lattice $\Lambda<\R^n$. Let us see how these choices affect definition \ref{mr}. We begin describing the effect of changing the set of generators of $\pi_1(S)$. 

\subsubsection{The effect of changing basis.}\label{ecb}  Given two basis $\mathcal{B}=\big\{\alpha_1,\beta_1,\dots,\alpha_g,\beta_g \big\}$ and $\mathcal{B}'=\big\{\alpha'_1,\beta'_1,\dots,\alpha'_g,\beta'_g \big\}$ of $\pi_1(S)$, we define $\Theta_\rho$ and $\Theta'_\rho$ the matrices associated to $\rho:\pi_1(S)\longrightarrow\T^n$ with respect to $\mathcal{B}$ and $\mathcal{B}'$ respectively. Every generator $\alpha'_l$ and $\beta_l'$ is a finite word in the letters $\alpha_1,\beta_1,\dots,\alpha_g,\beta_g$, so there are integers $a_{ij}$ with $i,j\in\big\{1,\dots,2g\big\}$ such that 
\[\rho(\alpha'_l)=\rho(\alpha_1)^{a_{2l-1\,1}} \cdots \rho(\beta_g)^{a_{2l-1\,2g}} \quad \text{ and }  \quad \rho(\beta'_l)=\rho(\alpha_1)^{a_{2l\,1}} \cdots \rho(\beta_g)^{a_{2l\,2g}}.
\]\\ Setting $A$ as the integral matrix $\big(a_{ij}\big)$ with $i,j\in\big\{1,\dots,2g\big\}$, a direct computation shows that $\Theta'_{\rho}$ equals $\Theta_\rho\cdot A$. Likewise, $\alpha_l,\beta_l$ are also finite words in the letters $\alpha'_1,\beta'_1,\dots,\alpha'_g,\beta'_g$. Hence, there exist integers $b_{ij}$  such that 
\[\rho(\alpha_l)=\rho(\alpha'_1)^{b_{2l-1\,1}} \cdots \rho(\beta'_g)^{b_{2l-1\,2g}} \quad \text{ and } \quad \rho(\beta_l)=\rho(\alpha'_1)^{b_{2l\,1}} \cdots \rho(\beta'_g)^{b_{2l\,2g}}.
\] Setting $B$ as the integral matrix $\big(b_{ij}\big)$ with $i,j\in\big\{1,\dots,2g\big\}$, the same computation implies $\Theta_{\rho}$ equals $\Theta'_\rho\cdot B$.\\
 \noindent It worth noticing $\Theta_\rho=\Theta_\rho\cdot AB$ and the matrices $A,B$ satisfy the equation $AB=\text{I}_{2g}$ implying that $A,B$ are unimodular. 
As the matrix $\Theta_\rho$ can be singular we cannot directly deduce that $AB=\text{I}_{2g}$, hence let us give a glimpse of why this is true. 

\smallskip

\noindent Instead of working in $\pi_1(S)$ we look at the situation in the first homology group $\mathrm{H}_1(S,\Z)\cong \Z^{2g}$. Let us consider $\alpha_1$ as a word $w\big(\alpha'_1,\beta'_1,\dots,\alpha'_g,\beta'_g\big)$; then
\[ \big[\alpha_1\big]=\big[\alpha'_1\big]^{b_{1\,1}}\big[\beta'_1\big]^{b_{1\,2}}\cdots\big[\alpha'_g\big]^{b_{1\,2g-1}}\big[\beta'_g\big]^{b_{1\,2g}}
\] where $b_{1j}$ with $j=1,\dots,2g$ are as above. On the other hand, any $\big[\alpha'_l\big]$ and $\big[\beta'_l\big]$ is of the form
\[ \big[\alpha'_l\big]=\big[\alpha_1\big]^{a_{2l-1\,1}}\big[\beta_1\big]^{a_{2l-1\,2}}\cdots\big[\alpha_g\big]^{a_{2l-1\,2g-1}}\big[\beta_g\big]^{a_{2l-1\,2g}},
\]
\[ \big[\beta'_l\big]=\big[\alpha_1\big]^{a_{2l\,1}}\big[\beta_1\big]^{a_{2l\,2}}\cdots\big[\alpha_g\big]^{a_{2l\,2g-1}}\big[\beta_g\big]^{a_{2l\,2g}}.
\] where $a_{ij}$ are as above. Replacing each $\big[\alpha'_l\big]$ and $\big[\beta'_l\big]$ inside $\big[w\big]=\big[\alpha_1\big]$, for any $l=1,\dots,g$, we obtain
\[ \big[\alpha_1\big]=\big[\alpha_1\big]^{k_1}\big[\beta_1\big]^{k_2}\cdots\big[\alpha_g\big]^{k_{2g-1}}\big[\beta_g\big]^{k_{2g}}.
\] As $\Z^{2g}$ is torsion-free, we may deduce that $k_1=1$ and $k_2=\cdots =k_{2g}=0$. On the other hand, it is straightforward to see that $k_m=\sum_{r=1}^{2g} b_{1r}a_{rm}$. Applying the same reasoning to any other generator we get the desire conclusion.

\begin{rmk}\label{mgm}
The matrices $A$ and $B$ found above may not have any geometrical meaning. Indeed, for closed surfaces the action of $\aut$ is not transitive on the set of basis of $\pi_1(S)$ and then two different basis may not be related by any automorphisms of $\pi_1(S)$. This means that not all matrices in $\slzz$ have a geometrical interpretation. As we shall see, a matrix has a geometrical meaning, that is induced by a homeomorphism of $S$, if and only if it is symplectic - see Proposition \ref{symacts} above.
\end{rmk}

\subsubsection{The effect of changing the basis of the lattice.}\label{ecl} We begin noticing that the $j$-th column of the matrix $\Theta_\rho$ corresponds to the vector of coordinates of a lift of the $j$-th generator of $\rho\big(\pi_1(S)\big)$ with respect to the standard lattice. Given any lattice $\Lambda$ with basis $\big\{v_1,\dots,v_n\big\}$ there is a matrix $g\in \text{SL}(n,\Z)$ such that $\Lambda=g\cdot\big(2\pi\Z^n\big)$. In particular $g\big(e_i\big)=v_i$. Change the basis means to change the coordinates of the vectors forming the columns of the matrix $\Theta_\rho$. Therefore, with respect to the lattice $\Lambda$, the matrix associated to $\rho$ has the following form $g \Theta_\rho$. In the sequel we shall need to consider the matrix $\Theta_\rho$ with respect to a lattice $\Lambda$ different to the standard one. We therefore extend the notation in the following way: We denote by $\Theta_\rho(\Lambda)$ the matrix associated to with respect to $\Lambda$. We shall use again the notation $\Theta_\rho$ when the lattice is the standard one.

\subsubsection{The $\Z$-row rank of the associated matrix.}  We now introduce the following numerical invariant concerning the associated matrix $\Theta_\rho$. As we shall see, such an invariant give us a way to characterise dense representations in $\Bbb T^n$ completely.

\begin{defn}\label{rora}
Let $M\in\textnormal{M}\big(n,m;\R\big)$. We define the $\Z$-row rank of $M$ as the dimension of the $\Z$-module generated by the rows of $M$. We shall denote it as $\text{rk}_{\Z}\big(M\big)$.
\end{defn}

\noindent We observe that the $\Z$-row rank is not invariant by transposition.

\begin{lem}\label{invrora}
Let $M\in\textnormal{M}\big(n,m;\R\big)$. The $\Z$-row rank $\textnormal{rk}_{\Z}\big(M\big)$ of $M$ is invariant under the left-action of $\textnormal{SL}(n,\Z)$. Similarly, $\textnormal{rk}_{\Z}\big(M\big)$ is invariant under the right-action of $\textnormal{SL}(m,\Z)$.
\end{lem}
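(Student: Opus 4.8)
The plan is to show that the $\Z$-module generated by the rows of $M$ is preserved (as a submodule of $\R^m$) under left-multiplication by $\mathrm{SL}(n,\Z)$, and that the abstract isomorphism type of this module is preserved under right-multiplication by $\mathrm{SL}(m,\Z)$; in both cases the dimension, being an invariant of the module, is unchanged. The key observation throughout is that elements of $\mathrm{SL}(n,\Z)$ and their inverses have integer entries, so they send $\Z$-linear combinations of rows to $\Z$-linear combinations of rows.

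For the left-action, let $g\in\mathrm{SL}(n,\Z)$ and consider $gM$. The rows of $gM$ are $\Z$-linear combinations of the rows of $M$, since the $i$-th row of $gM$ equals $\sum_j g_{ij}$ times the $j$-th row of $M$ with $g_{ij}\in\Z$. Hence the $\Z$-module $R(gM)$ generated by the rows of $gM$ is contained in the $\Z$-module $R(M)$ generated by the rows of $M$. Applying the same argument to $g^{-1}\in\mathrm{SL}(n,\Z)$ acting on $gM$ gives $R(M)=R(g^{-1}gM)\subseteq R(gM)$, so $R(gM)=R(M)$ as submodules of $\R^m$. In particular $\mathrm{rk}_\Z(gM)=\mathrm{rk}_\Z(M)$.

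For the right-action, let $h\in\mathrm{SL}(m,\Z)$ and consider $Mh$. Here the submodule of $\R^m$ itself changes, so I would instead argue at the level of the abstract module: right-multiplication by $h$ induces a $\Z$-linear map from $R(M)$ to $R(Mh)$ (the $i$-th row of $M$ maps to the $i$-th row of $Mh$), and right-multiplication by $h^{-1}\in\mathrm{SL}(m,\Z)$ provides the inverse map. Thus $R(M)$ and $R(Mh)$ are isomorphic as $\Z$-modules, and since the dimension in Definition \ref{rora} depends only on the isomorphism class of the $\Z$-module (it is the rank of its free part, or equivalently the $\R$-dimension of its $\Z$-span tensored with $\R$), we conclude $\mathrm{rk}_\Z(Mh)=\mathrm{rk}_\Z(M)$.

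The only genuinely subtle point — and the step I expect to require the most care — is the right-action argument, because unlike the left-action the ambient submodule of $\R^m$ is genuinely different after multiplying by $h$, so one cannot simply claim equality of submodules. The clean way around this is to recognize that the ``dimension of the $\Z$-module'' in Definition \ref{rora} is an intrinsic invariant of the module (its torsion-free rank), and that right-multiplication by a unimodular integer matrix is an isomorphism of the row module onto its image; invoking the invariance of rank under $\Z$-module isomorphism then finishes the proof. One should be slightly careful that ``dimension'' here means the rank of the module over $\Z$ as a finitely generated free $\Z$-module (equivalently $\dim_\Q(R(M)\otimes_\Z\Q)$), which is exactly what is preserved by an isomorphism.
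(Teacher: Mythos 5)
Your proof is correct, and it takes a route that is genuinely (if mildly) different from the paper's. The paper encodes the rank through the module of integer relations among the rows, $Z=\{v\in\Z^n : vM=0\}$, which has rank $n-k$, and observes that the relations of $AM$ are exactly $Z\cdot A^{-1}$, a $\Z$-module of the same rank; the right-action case is then declared analogous (from that viewpoint it is in fact even easier, since $vMB=0\iff vM=0$ for invertible $B$, so $Z$ does not move at all). You instead track the row module itself: for the left action you obtain the stronger statement that $R(gM)=R(M)$ as submodules of $\R^m$, and for the right action you correctly note that the submodule genuinely changes but is carried isomorphically onto its image by the invertible integral map, so its rank is unchanged. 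Your handling of the right action is the more scrupulous of the two: the one point that must not be fudged is that the ``dimension'' in Definition \ref{rora} is the intrinsic torsion-free rank of a finitely generated (hence free) submodule of $\R^m$, not the dimension of its $\R$-span, and you make exactly that distinction. Both arguments ultimately rest on the same fact, namely that a matrix in $\mathrm{SL}(k,\Z)$ and its inverse are both integral, so either is a complete and acceptable proof.
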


\begin{proof}
We prove the first claim. Let $k=\text{rk}_{\Z}\big(M\big)\le n$. Define $Z$ as the subset of $\Z^n$ of those vectors $v$ such that $v M=0$. Notice that $Z$ is a $\Z$-module of dimension $n-k$. Let $A$ be any matrix in $\textnormal{SL}(n,\Z)$ and compute $AM$. It is easy to check that the $j$-th row is given by the linear combination $\sum_{i=1}^n a_{ji}\big(m_{i1},\dots,m_{im}\big)$. Suppose there is a vector $\mu=\big(\mu_1,\dots,\mu_n\big)$ such that $\mu A M=0$, then a straightforward computation shows that $\mu A\in Z$, that is $\mu=v A^{-1}$ for some $v\in Z$. Therefore, the subset $Z\cdot A^{-1}\subset \Z^n$ is the set of vectors $\mu$ such that $\mu A M=0$ and it has dimension $n-k$ over $\Z$. Therefore $\text{rk}_{\Z}\big(AM\big)=k$. Similarly, the second claim follows applying an analogous reasoning.
\end{proof}

\noindent Given a representation $\rho:\pi_1(S)\longrightarrow \T^n$, the following claims are direct consequences of the lemma above applied to the matrix $\Theta_\rho$.

\begin{cor}
Let $\rho:\pi_1(S)\longrightarrow \T^n$ be a representation and let $\Theta_\rho$ be the matrix associated to $\rho$ with respect to some basis of $\pi_1(S)$. The $\Z$-row rank of $\Theta_\rho$ is well-defined and it does not depend on any choice of a basis for $\pi_1(S)$ either on the choice of any lattice.
\end{cor}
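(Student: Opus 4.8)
The plan is to deduce this corollary directly from Lemma \ref{invrora} together with the two descriptions, given in subsections \ref{ecb} and \ref{ecl}, of how the associated matrix $\Theta_\rho$ transforms under the two available choices. The key observation is that both changes act on $\Theta_\rho$ by multiplication by a unimodular matrix: changing the basis of $\pi_1S$ replaces $\Theta_\rho$ by $\Theta_\rho\cdot A$ with $A\in\text{SL}(2g,\Z)$ (this is exactly the content of \ref{ecb}, where it is shown $AB=\text{I}_{2g}$), while changing the lattice from the standard one to $\Lambda=g\cdot(2\pi\Z^n)$ replaces $\Theta_\rho$ by $g\,\Theta_\rho$ with $g\in\text{SL}(n,\Z)$ (the content of \ref{ecl}). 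Thus any admissible change of data transforms $\Theta_\rho$ by a left-action of $\text{SL}(n,\Z)$ and/or a right-action of $\text{SL}(2g,\Z)$.

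First I would fix an arbitrary representation $\rho:\pi_1S\longrightarrow\T^n$ and let $\Theta_\rho$ denote its associated matrix with respect to some fixed basis of $\pi_1S$ and the standard lattice; this matrix lives in $\text{M}\big(n,2g;\R\big)$, so its $\Z$-row rank $\text{rk}_{\Z}(\Theta_\rho)$ is defined via Definition \ref{rora}. I would then invoke Lemma \ref{invrora} in the two cases $m=2g$: since $\text{rk}_{\Z}$ is invariant under the right-action of $\text{SL}(2g,\Z)$, we get $\text{rk}_{\Z}(\Theta_\rho\cdot A)=\text{rk}_{\Z}(\Theta_\rho)$ for every $A\in\text{SL}(2g,\Z)$, so the invariant is unchanged under any change of basis of $\pi_1S$. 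Likewise, invariance under the left-action of $\text{SL}(n,\Z)$ gives $\text{rk}_{\Z}(g\,\Theta_\rho)=\text{rk}_{\Z}(\Theta_\rho)$ for every $g\in\text{SL}(n,\Z)$, so the invariant is unchanged under any change of lattice. Combining the two, for an arbitrary choice of basis $\mathcal{B}'$ and lattice $\Lambda$ the associated matrix is of the form $g\,\Theta_\rho\,A$, and two applications of Lemma \ref{invrora} yield $\text{rk}_{\Z}(g\,\Theta_\rho\,A)=\text{rk}_{\Z}(\Theta_\rho)$.

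I do not anticipate a genuine obstacle here, since the corollary is essentially a bookkeeping assembly of results already established. The only point requiring a word of care is that the two transformations commute harmlessly and that a general admissible pair of choices really does produce a matrix of the form $g\,\Theta_\rho\,A$ rather than something more complicated; this follows because the column-coordinates (governed by the lattice, hence by left multiplication by $g$) and the word-expressions of one basis in terms of the other (governed by right multiplication by $A$) are independent operations on $\Theta_\rho$. With that remark in place, the equality $\text{rk}_{\Z}\big(g\,\Theta_\rho\,A\big)=\text{rk}_{\Z}\big(\Theta_\rho\big)$ shows the invariant is independent of all choices, which is precisely the assertion of the corollary.
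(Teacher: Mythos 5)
Your proposal is correct and matches the paper exactly: the paper states this corollary as a ``direct consequence'' of Lemma \ref{invrora} applied to $\Theta_\rho$, using precisely the transformation rules from subsections \ref{ecb} (right multiplication by $A\in\text{SL}(2g,\Z)$) and \ref{ecl} (left multiplication by $g\in\text{SL}(n,\Z)$) that you invoke. Your assembly of the two invariances into $\text{rk}_{\Z}(g\,\Theta_\rho\,A)=\text{rk}_{\Z}(\Theta_\rho)$ is exactly the intended argument.
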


\noindent Let $v_1\dots,v_k$ be vectors in $\R^n$. In the sequel, we shall make use of the following

\begin{defn}
We say that a $\Z$-module generated by $v_1\dots,v_k$ is $\pi\Q$-free if and only if 
\begin{equation}
    \langle v_1,\dots,v_k\rangle_{\Z}\cap \pi\,\Q^n=\big\{(0,\dots,0)\big\}.
\end{equation} 
\end{defn}

\noindent Keeping this definition in mind we finally state the following:

\begin{prop}\label{qfree}
Let $\rho:\pi_1(S)\longrightarrow \T^n$ be a representation and let $\Theta_\rho$ be the matrix associated to $\rho$ with respect to some basis of $\pi_1(S)$. The $\Z$-module $\langle\Theta_j\, :\,j=1,\dots,n\rangle_{\Z}$ generated by the rows of $\Theta_\rho$ is $\pi\Q$-free if and only if the $\Z$-module $\langle\big(A\Theta_\rho\big)_j\, :\,j=1\dots,n\rangle_{\Z}$ generated by the rows of $A\Theta_\rho$ is $\pi\Q$-free, where $A\in\textnormal{SL}(n,\Z)$. Similarly, $\langle\Theta_j\, :\,j=1,\dots,n\rangle_{\Z}$ is $\pi\Q$-free if and only if the $\Z$-module $\langle\big(\Theta_\rho B\big)_j\, :\,j=1,\dots,n\rangle_{\Z}$ is $\pi\Q$-free, where $B\in\textnormal{SL}(2g,\Z)$.
\end{prop}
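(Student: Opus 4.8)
The plan is to treat the two invariance statements separately, exploiting in each case that the relevant transformation is \emph{integral and unimodular}, so that it respects simultaneously the $\Z$-module structure and the set $\pi\Q^{2g}$. Throughout I regard the $n$ rows $\Theta_1,\dots,\Theta_n$ of $\Theta_\rho$ as vectors in $\R^{2g}$, so that $\pi\Q$-freeness of the module they generate means precisely $\langle\Theta_1,\dots,\Theta_n\rangle_\Z\cap\pi\Q^{2g}=\{0\}$.

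For the left action I would argue that the two $\Z$-modules are in fact \emph{equal} as subsets of $\R^{2g}$. Since $A\in\slzn$ has integer entries, each row of $A\Theta_\rho$ is a $\Z$-linear combination of the rows $\Theta_j$, whence $\langle(A\Theta_\rho)_j\rangle_\Z\subseteq\langle\Theta_j\rangle_\Z$; applying the same observation to $A^{-1}\in\slzn$ (again integral) gives the reverse inclusion. As the two modules coincide, so do their intersections with $\pi\Q^{2g}$, and one intersection is trivial exactly when the other is. This is the easy half, and it mirrors in simplified form the argument used for Lemma \ref{invrora}.

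For the right action the generated module genuinely changes, so I would instead transport the whole picture by the linear bijection $\Phi\colon\R^{2g}\to\R^{2g}$, $\Phi(v)=vB$, whose inverse is $v\mapsto vB^{-1}$. The rows of $\Theta_\rho B$ are exactly $\Phi(\Theta_1),\dots,\Phi(\Theta_n)$, so $\Phi$ restricts to a $\Z$-module isomorphism of $\langle\Theta_j\rangle_\Z$ onto $\langle(\Theta_\rho B)_j\rangle_\Z$. The key point — and the only place where integrality, rather than mere invertibility over $\R$, is used — is that $\Phi$ maps $\pi\Q^{2g}$ onto itself: if $w=\pi q$ with $q\in\Q^{2g}$ then $\Phi(w)=\pi(qB)\in\pi\Q^{2g}$ because $B$ is integral, and symmetrically $\Phi^{-1}$ preserves $\pi\Q^{2g}$ since $B^{-1}\in\slzz$ is integral as well. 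Because $\Phi$ is a bijection fixing the origin it commutes with intersection, which yields
\[
\langle(\Theta_\rho B)_j\rangle_\Z\cap\pi\Q^{2g}
=\Phi\big(\langle\Theta_j\rangle_\Z\big)\cap\Phi\big(\pi\Q^{2g}\big)
=\Phi\big(\langle\Theta_j\rangle_\Z\cap\pi\Q^{2g}\big),
\]
and the right-hand side is $\{0\}$ if and only if $\langle\Theta_j\rangle_\Z\cap\pi\Q^{2g}=\{0\}$. I do not anticipate any serious obstacle beyond correctly isolating this invariance of $\pi\Q^{2g}$ under integral unimodular maps; the argument is by design parallel to the proof of Lemma \ref{invrora}, with the role of the kernel lattice $Z$ there played here by the set $\pi\Q^{2g}$.
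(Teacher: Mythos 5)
Your proposal is correct and follows essentially the same route as the paper: the left-multiplication case is handled by observing that integrality of $A$ and $A^{-1}$ leaves the generated $\Z$-module unchanged, and the right-multiplication case by transporting a putative relation through $v\mapsto vB$ and using integrality of $B$ and $B^{-1}$ to see that $\pi\Q^{2g}$ is preserved in both directions. Your write-up is in fact slightly more complete than the paper's, which only spells out one implication in each case and leaves the converse (via $A^{-1}$, $B^{-1}$) implicit.
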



\begin{proof}[Proof of Proposition \ref{qfree}]
Look at the matrix $A\Theta_\rho$ and suppose there are $\lambda_1,\dots,\lambda_n\in \Z$ such that 
\[ \sum_{j=1}^n\lambda_j\Big(\sum_{i=1}^n a_{ji}\big(\theta_{i,1},\dots,\theta_{i,2g}\big)\Big)\in \pi\Q^{2g}.
\] A simple manipulation of the formula above shows that
\[ \sum_{j=1}^n\lambda_j\Big(\sum_{i=1}^n a_{ji}\big(\theta_{i,1},\dots,\theta_{i,2g}\big)\Big)=\sum_{i=1}^n\Big(\sum_{j=1}^n \lambda_ja_{ji}\Big)\big(\theta_{i,1},\dots,\theta_{i,2g}\big),
\] implying the existence of some $\mu_1,\dots,\mu_n\in\Z$ such that $\displaystyle\sum_{j=1}^n\mu_j\big(\theta_{i,1},\dots,\theta_{i,2g}\big)\in\pi\Q^{2g}$. The proof of the second claim works similarly: Suppose there are $\lambda_1,\dots,\lambda_n\in \Z$ such that
\[ \sum_{i=1}^n\lambda_i\Big(\sum_{j=1}^{2g} \theta_{i,j}\big(b_{j1},\dots,b_{j2g}\big)\Big)\in \pi\Q^{2g}.
\] The same manipulation shows that
\[ \sum_{i=1}^n\lambda_i\Big(\sum_{j=1}^{2g} \theta_{i,j}\big(b_{j1},\dots,b_{j2g}\big)\Big)=\sum_{j=1}^{2g}\Big(\sum_{i=1}^{n} \lambda_i\theta_{i,j}\Big)\big(b_{i1},\dots,b_{i2g}\big),
\] implying that $\displaystyle\sum_{i=1}^{n} \lambda_i\theta_{i,j}\in\pi\Q$ for any $j=1,\dots,2g$. That is $\big(\lambda_1,\dots,\lambda_n\big)\Theta_\rho\in \pi\Q^{2g}$.
\end{proof}

\subsection{Remarks and comments on the modular action} In this section we collect a couple of final remarks about the $\spz$-action. 

\subsubsection{Explicit description of the modular action}\label{transact} The action of the mapping class group on the representation space is defined by pre-composition of any representation with an automorphism $\phi\in\textnormal{Out}(\pi_1(S))$, namely any representation $\rho$ is sent to $\rho\circ\phi^{-1}$ under this action. Since the Torelli group acts trivially on the representation space, the action of mapping class group boils down to an action of the group $\spz$ which agrees with the $\spz$-action on \textnormal{Hom}$\big(\mathrm{H}_1(S,\Bbb Z),\Bbb T^n)$, this is a consequence of Proposition \ref{torga}. In section \S\ref{matrep}, we have identified the representation space with $\text{M}\big(n,2g;\T\big)$ by using the mapping $\imath$ associating to any representation $\rho$ its matrix $\Theta_\rho$. We use such a mapping to transfer the action of Mod$(S)$ on \textnormal{Hom}$\big(\pi_1(S),\T^n\big)$ to an action of $\spz$ on $\text{M}\big(n,2g;\T\big)$. Since any $\phi\in\mathcal{I}(S)$ leaves $\rho$ fixed, the matrix associated to $\rho'=\phi\cdot\rho=\rho\circ\phi^{-1}$ agrees with $\Theta_\rho$, this is a consequence of Proposition \ref{torga}. Any coset $\phi\,\mathcal{I}(S)$ defines a unique matrix $A$ in $\spz$. In the light of the discussion given at subsection \S\ref{ecb}, the matrix associated to $\rho'=\phi\cdot\rho$ is $\Theta_{\rho'}=\Theta_\rho\,A^{-1}$. Therefore, the action of $\spz$ on $\text{M}\big(n,2g;\T\big)$ is defined as $A\cdot\Theta_\rho=\Theta_\rho\,A^{-1}$. As the mapping $\imath$ is a homeomorphism, it is clear that $\spz$-orbit of $\rho$ is dense in the representation space if and only if the Mod$(S)$-orbit of $\Theta_\rho$ is dense in $\text{M}\big(n,2g;\T\big)$.

\subsubsection{The modular action commutes with the change of lattice} Given a representation $\rho:\pi_1(S)\to\T^n$, the main goal of the present paper is to study its orbit under the action of the mapping class group. This reduces to study the orbit of the matrix $\Theta_\rho$ naturally attached to $\rho$ in the space M$\big(n,2g;\Bbb T\big)$. However the matrix $\Theta_\rho$ depends on the lattice chosen and hence the orbit could depend on the chosen lattice. The aim of this paragraph is to point out that this is not the case; indeed the change of lattice commutes with the modular action.  Each element $\Theta$ in the space M$\big(n,2g;\Bbb T\big)$ can be thought as the datum of $n$ vectors $\Theta_i\in\T^{2g}$ corresponding to the rows of $\Theta$. By adopting this point of view, the space M$\big(n,2g;\Bbb R\big)$ identifies with $\T^{2g}\times\cdots\times\T^{2g}$. There is a left action of the group $G$ defined as \\
\[ G=\left\{\begin{pmatrix} A &  & \\ & \ddots &\\ & & A \end{pmatrix} \, : \, A\in\spz\right\} \cong\spz <\text{SL}(2gn,\Z)
\] 
\text{}\\
\noindent on the $2gn$-dimensional torus induced by the natural right action of the symplectic group in the matrix space M$\big(n,2g;\Bbb T\big)$. Using this new perspective, one can easily verify that any change of lattice commutes with the $\spz$ action. Indeed, any change of lattice $h\in\slzn$ can be seen as an element of the group $H$ defined as
\[ H=\left\{\begin{pmatrix} 
h_{1\,1}\text{I}_{2g} & \cdots & h_{1\,n}\text{I}_{2g}\\
\vdots & \ddots & \vdots\\
h_{n\,1}\text{I}_{2g} & \cdots & h_{n\,n}\text{I}_{2g}
\end{pmatrix}\, \Bigg| \, \text{ where } \begin{pmatrix} h_{1\,1} & \cdots  & h_{1\,n}  \\ \vdots & & \vdots \\ h_{n\,1} & \cdots & h_{n\,n} \end{pmatrix}\in\slzn\right\} \cong\slzn <\text{SL}(2gn,\Z)
\] Since $H$ commutes with the group $G$ defined above, the $\spz$ action commutes with the change of lattice.

\medskip

\section{Characterising dense representations}\label{dr}

\noindent In this section we provide a complete characterisation of \emph{dense representations} by providing necessary and sufficient conditions. From section \S\ref{intro}, we recall that a representation $\rho:\pi_1(S)\longrightarrow \T^n$ is \emph{dense} if the subgroup $\rho\big(\pi_1(S)\big)$ is dense in $\T^n$. We have seen in the previous section that, upon choosing a basis of the fundamental group and a lattice, each representation is represented by a well-define matrix $\Theta_\rho$. Along this section we fix an arbitrary basis for the fundamental group and we consider $\T^n$ as the quotient of $\R^n$ with the standard lattice. 

\begin{thm}\label{codr}
Let $\rho:\pi_1(S)\longrightarrow \Bbb T ^n$ be a representation and let $\Theta_\rho$ its associated matrix. Then $\rho$ is dense in $\Bbb T^n$ if and only if \textnormal{rk}$_{\Z}\big(\Theta_\rho\big)=n$ and the rows of $\Theta_\rho$ generate a $\pi\mathbb{Q}$-free $\mathbb{Z}$-module.
\end{thm}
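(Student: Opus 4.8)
The plan is to characterise density of the image $\rho(\pi_1S)$ by Pontryagin duality, and then to translate the resulting condition into the two stated numerical conditions on the rows of $\Theta_\rho$.

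First I would observe that the image $H:=\rho(\pi_1S)$ is exactly the subgroup of $\T^n=\R^n/2\pi\Z^n$ generated by the $2g$ columns $c_1,\dots,c_{2g}$ of $\Theta_\rho$, since each column is a lift of the image of one of the chosen generators. For a subgroup of a compact abelian group, density is detected by characters: $H$ is dense in $\T^n$ if and only if the trivial character is the only continuous character $\chi:\T^n\to\T$ whose restriction to $H$ is trivial (equivalently $\overline H=\bigcap_{\chi|_H=1}\ker\chi$, and the annihilator of $H$ coincides with that of $\overline H$). The characters of $\T^n$ are the maps $\chi_m(e^{i\theta_1},\dots,e^{i\theta_n})=e^{i\sum_k m_k\theta_k}$ indexed by $m\in\Z^n$. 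Since $\chi_m$ is trivial on $H$ exactly when it is trivial on every generator $c_j$, that is when $\sum_k m_k\theta_{k,j}\in 2\pi\Z$ for all $j$, I obtain the reformulation: \emph{$\rho$ is dense if and only if there is no nonzero $m\in\Z^n$ with $m\,\Theta_\rho\in 2\pi\Z^{2g}$}, where $m$ is viewed as a row vector. Crucially, $m\,\Theta_\rho=\sum_k m_k\Theta_k$ is precisely a general element of the $\Z$-module generated by the rows $\Theta_1,\dots,\Theta_n$ of $\Theta_\rho$.

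With this reformulation both implications are elementary. For sufficiency, assume $\text{rk}_{\Z}(\Theta_\rho)=n$ and that the rows generate a $\pi\Q$-free module, and suppose for contradiction that $m\,\Theta_\rho\in 2\pi\Z^{2g}$ for some nonzero $m$. If $m\,\Theta_\rho=0$ this contradicts the $\Z$-independence of the rows guaranteed by the full $\Z$-row rank; if $m\,\Theta_\rho\neq 0$, then it is a nonzero element of the row-module lying in $2\pi\Z^{2g}\subset\pi\Q^{2g}$, contradicting $\pi\Q$-freeness. Hence $\rho$ is dense. For necessity, assume $\rho$ dense. Were the rank strictly less than $n$, some nonzero $m$ would satisfy $m\,\Theta_\rho=0\in 2\pi\Z^{2g}$, contradicting density; so the rank equals $n$. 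Were the module not $\pi\Q$-free, there would be a nonzero $v=m\,\Theta_\rho\in\pi\Q^{2g}$ with $m\neq 0$; clearing denominators yields an integer $N$ with $Nv\in\pi\Z^{2g}$, whence $(2Nm)\,\Theta_\rho=2Nv\in 2\pi\Z^{2g}$ with $2Nm\neq 0$, again contradicting density.

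The only genuinely nontrivial ingredient is the duality statement of the first step, namely that density of a subgroup of $\T^n$ is equivalent to triviality of its annihilator in the character group; this I would either cite or derive from the structure of closed subgroups of tori. The remaining arithmetic is routine, the points deserving care being the passage from $\pi\Q$ to $2\pi\Z$ (clearing denominators together with the extra factor $2$), and the observation that the rank hypothesis is genuinely needed and is \emph{not} subsumed by $\pi\Q$-freeness, since the zero vector always lies in $\langle\Theta_1,\dots,\Theta_n\rangle_{\Z}\cap\pi\Q^{2g}$, so a rank drop is invisible to the $\pi\Q$-free condition. Both conditions are therefore independent and both are required.
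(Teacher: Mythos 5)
Your proof is correct, and it takes a genuinely different route from the paper. You compress the whole problem into one standard fact --- Pontryagin duality for the compact group $\T^n$: a subgroup is dense iff its annihilator in $\widehat{\T^n}\cong\Z^n$ is trivial --- and then observe that the annihilator condition ``no nonzero $m\in\Z^n$ with $m\,\Theta_\rho\in 2\pi\Z^{2g}$'' is, by elementary arithmetic (your clearing of denominators handles the passage from $\pi\Q^{2g}$ to $2\pi\Z^{2g}$ correctly), exactly the conjunction of full $\Z$-row rank and $\pi\Q$-freeness of the row module. The paper instead argues both directions by hand: for ``dense $\Rightarrow$ conditions'' it builds an integer matrix $M$ with $\det M=-\lambda_i$ inducing a finite covering of $\T^n$ under which one coordinate projection of the image becomes discrete, contradicting Lemma \ref{denproj}; for the converse it uses the structure of closed subgroups of $\T^n$ together with Lemma \ref{ccm} (completing a primitive sublattice to a basis of $\Z^n$, via Cassels) to show a rank drop. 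Your approach is shorter and cleaner, and the duality input is citable from Hewitt--Ross \cite{HR}, which is already in the bibliography; what the paper's more geometric argument buys is the explicit interpretation of $\mathrm{rk}_{\Z}(\Theta_\rho)$ as the dimension of $\overline{\rho(\pi_1S)}$ and the normal form of Lemma \ref{ccm}, both of which are reused later in Section \ref{ratthm}. Your closing remark that the rank condition is not subsumed by $\pi\Q$-freeness (since $0$ always lies in the intersection) is a correct and worthwhile observation.
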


\noindent NWe can notice that the necessary condition means that the $\Z$-module generated by the rows of the matrix $\Theta_\rho$ does not intersect $\pi\Q^{2g}$ and is equivalent to say that row rank over $\Z$ of the matrix
\begin{equation}\label{commat}
\begin{pmatrix}
\Theta_\rho \\ 
\pi\cdot\text{I}_{2g}
\end{pmatrix}=
\begin{pmatrix}
\theta_{1,1} & \cdots & \theta_{1,2g}\\
\vdots &  & \vdots\\
\theta_{n,1} & \dots & \theta_{n,2g}\\
\pi & \cdots & 0 \\
\vdots & \ddots & \vdots \\
0 &\cdots& \pi
\end{pmatrix}
\end{equation}
\noindent is maximal, namely $2g+n$. Before proving the Theorem, we need a preliminar Lemma.

\begin{lem}\label{denproj}
Suppose that $\rho:\pi_1(S)\longrightarrow \Bbb T^n$ is dense. Then each representation $\rho_k=\pi_k\circ\rho$, where $\pi_k$ is the projection to $k^{th}$ factor, is dense.
\end{lem}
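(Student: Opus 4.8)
The plan is to treat this as an elementary statement about continuous surjections, since density is preserved under such maps. First I would record the tautology that $\rho_k(\pi_1S)=\pi_k\big(\rho(\pi_1S)\big)$, which holds because $\rho_k=\pi_k\circ\rho$ by the very definition of the projection to the $k$-th factor. Thus proving that $\rho_k$ is dense amounts to proving that the image of the dense set $\rho(\pi_1S)$ under $\pi_k$ is dense in the circle.

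The key step is the standard topological fact: if $f\colon X\to Y$ is continuous and surjective and $A\subseteq X$ is dense, then $f(A)$ is dense in $Y$. Indeed, by continuity $f(\overline{A})\subseteq\overline{f(A)}$, so $Y=f(X)=f(\overline{A})\subseteq\overline{f(A)}$, forcing $\overline{f(A)}=Y$. I would apply this with $A=\rho(\pi_1S)$, dense in $\T^n$ by hypothesis, and $f=\pi_k\colon\T^n\to\mathbb{S}^1$ the projection onto the $k$-th coordinate circle. Since $\pi_k$ is a continuous surjective group homomorphism — both properties being immediate from the product description $\T^n=\mathbb{S}^1\times\cdots\times\mathbb{S}^1$ — the set $\pi_k\big(\rho(\pi_1S)\big)=\rho_k(\pi_1S)$ is dense in $\mathbb{S}^1$, which is exactly the assertion that $\rho_k$ is dense.

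There is no genuine obstacle: the content is entirely the continuity-and-surjectivity argument above, and the only points to check — that $\pi_k$ is continuous and onto — follow directly from the definition of the product topology on $\T^n$. If one prefers to avoid the abstract closure argument, I could instead argue pointwise: given $z\in\mathbb{S}^1$ and $\varepsilon>0$, choose any point of $\T^n$ whose $k$-th coordinate is $z$, use density of $\rho(\pi_1S)$ to approximate it, and project back via $\pi_k$ to approximate $z$ within $\varepsilon$. Either route gives the lemma in a line or two, so this statement is really a preparatory observation rather than a substantial result.
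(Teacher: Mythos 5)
Your proof is correct and is essentially the same argument as the paper's: the paper simply phrases it contrapositively, noting that if $\rho_k$ misses an open set $A\subset\Bbb S^1$ then $\rho$ misses the open set $A\times\Bbb T^{n-1}=\pi_k^{-1}(A)$, which is exactly the continuity-and-surjectivity fact you invoke. No gap; both routes are one-line observations.
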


\begin{proof}[Proof of Lemma \ref{denproj}]
Suppose there is $k$ for which the representation $\rho_k$ is not dense. Then there is an open subset $A\subset \Bbb S^1$ such that $A\cap\rho_k\big(\pi_1(S)\big)=\emptyset$. Suppose without loss of generality that $k=1$. Then $\big(A\times \Bbb T^{n-1}\big)\cap \rho\big(\pi_1(S)\big)=\emptyset$. In particular $\rho$ is not dense, hence a contradiction.
\end{proof}

\begin{proof}[Proof of Theorem \ref{codr}]
Assume $\rho$ has a dense image and suppose the $\Z$-module generated by the rows intersect $\pi\Q^{2g}$ that is $\text{rk}_{\Z} \begin{pmatrix}
\Theta_\rho \\ 
\pi\cdot \text{I}_{2g}
\end{pmatrix} < 2g +n$. Thus, there is a row $\Theta_i$ of $\Theta$ such that: 

\[\sum_{\substack{j=1 \\j\neq i}}^{n}\lambda_j \Theta_j + \big(\lambda_{n+1}\pi,\dots,\lambda_{n+2g}\pi\big)=\lambda_i \Theta_i
\]

\noindent with $\lambda_i$ different to zero. Such a summation can be rewritten as: 

\[\sum_{j=1}^{n}\lambda_j \Theta_j = \big(\lambda_{n+1}\pi,\dots,\lambda_{n+2g}\pi\big)
\]

\noindent for some $\lambda_j \in \mathbb{Z}$ and not all zero. Consider the matrix $M\in \text{M}\big(n,\mathbb{Z}\big)\cap \text{GL}\big(n,\Q\big)$ defined as:\\
\[ M=I_n+ (\lambda_i-1)E_{ii} - \sum_{\substack{j=1 \\j\neq i}}^n \lambda_j E_{ij}=
\begin{pmatrix}
1 & 0 & \cdots & \cdots & \cdots  & 0\\
0 & 1 & \cdots & \cdots & \cdots & \cdots \\
\cdots & \cdots & \ddots & \cdots & \cdots & \cdots\\
-\lambda_1 & -\lambda_2 & \cdots & \lambda_i & \cdots & -\lambda_n\\
\cdots & \cdots & \cdots & \cdots  & 1 & 0\\
0 & 0 & \cdots & \cdots & 0 & 1\\
\end{pmatrix}
\]
\text{}\\
\noindent  where the $E_{ij}=\big(e_{kl}\big)$ are the matrices with coefficients $e_{kl}= \delta_{ki}\delta_{lj}$. The matrix $M$ defines a linear homeomorphism, say $f_M$ of $\R^n$ with respect to the canonical basis because $\det M=-\lambda_i$ which is different to zero. The mapping $f_M$ sends $\Z^n$ to itself and descends to a finite-degree covering $\overline{f}_M:\T^n\longrightarrow \T^n$ - in fact the degree coincides with the determinant of $M$. In particular the following equation holds: $\pi\circ f_M=\overline{f}_M\circ \mathrm{exp}$, where $\mathrm{exp}:\R^n\longrightarrow \T^n$ denotes as usual the canonical projection. Consider now the $\Z$-module $\langle\Theta_j\, :\,j=1,...,n\rangle_{\Z}$ generated by the rows of $\Theta_\rho$. A straightforward computation shows that its image via the mapping $f_M$ is the $\Z$-module generated by the vectors
\[\Bigg\langle
\begin{pmatrix}
\theta_{1,1}\\
\dots\\
\lambda_{n+1}\pi\\
\dots\\
\theta_{n,1}
\end{pmatrix}
,\dots,
 \begin{pmatrix}
\theta_{1,2g}\\
\dots\\
\lambda_{n+2g}\pi\\
\dots\\
\theta_{n,2g}
\end{pmatrix}
\Bigg\rangle_{\Z}\]

\noindent Let us point out the following fact: As $\rho$ is assumed to be dense in the torus, the image via the canonical projection in $\mathbb{T}^n$ of $\mathbb{Z}$-module generated by the rows of $\Theta_\rho$ fills a dense subset of $\mathbb{T}^n$, namely the image of $\rho$. As $M$ commutes with the action of $2\pi\Z^n$ and pass through to the quotient as a finite-degree covering map of the $\mathbb{T}^n$, the $\Z$-module $M\cdot\langle\Theta_j\, :\,j=1,...,n\rangle_{\Z}$ is mapped on a dense subset of the torus. On the other hand, the projection of the $i$-th factor is discrete. Lemma \ref{denproj} implies the desire contradiction.\\
We now prove the opposite implication and again we argue by contradiction. Suppose $\rho$ does not have a dense image in the $n$-torus, then its closure is a $k$-dimensional sub-manifold, say $S_0$, of dimension $k<n$. We note that $S_0$ may not be connected in general. Indeed any closed subgroup of $\T^n$ is homeomorphic to $\T^d\times\frac{\Z}{ m_1\Z}\times\cdots\times\frac{\Z}{m_{n-d}\Z}$, that is a finite collection of inhomogeneous torii. Assume first $S_0$ be connected; we shall deduce the general case later on. The subspace $S_0$ lifts to a linear subspace $\widetilde{S_0}$ of $\R^n$ which of course contains the $\Z$-module $\langle\Theta^j\, :\,j=1,...,2g\rangle_{\Z}$ generated by the columns of $\Theta_\rho$. We now invoke the following lemma.

\begin{lem}\label{ccm}
There is $g\in \slzn$ such that : 
\[g\cdot \langle\Theta^j\, :\,j=1,\dots,2g\rangle_{\Z} < \big\langle e_1,\dots,e_k\big\rangle_\R\]
where the $e_i$'s are the vectors of the canonical basis of $\R^n$.
\end{lem}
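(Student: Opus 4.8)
The plan is to exploit that $S_0$, being a connected closed subgroup of $\T^n$, is a subtorus, so that its lift $\widetilde{S_0}$ is a \emph{rational} subspace of $\R^n$; once this is in hand, producing $g$ is a routine lattice manipulation. Throughout I identify $\T^n$ with $\R^n/2\pi\Z^n$ via $\textsf{exp}$, and I write $\Lambda=2\pi\Z^n$ and $p:\R^n\to\T^n$ for the canonical projection, so that $S_0=p(V)$ with $V:=\widetilde{S_0}$ and $\dim V=k$.

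First I would establish the rationality of $V$. Since $S_0=p(V)\cong V/(V\cap\Lambda)$ is compact of dimension $k=\dim V$, the discrete subgroup $V\cap\Lambda$ must have full rank $k$ inside $V$; were its rank smaller, the quotient $V/(V\cap\Lambda)$ would fail to be compact. Consequently $V$ is spanned over $\R$ by vectors of $V\cap\Lambda\subset\Lambda$, i.e.\ $V$ is a rational subspace with respect to the standard lattice. Because $\Lambda=2\pi\Z^n$ is $\Z^n$ rescaled by $2\pi$, rationality with respect to $\Lambda$ and with respect to $\Z^n$ coincide, and $L:=V\cap\Z^n$ is a sublattice of rank $k$ with $V=L\otimes_\Z\R$.

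Next I would observe that $L$ is a \emph{saturated} (primitive) sublattice of $\Z^n$: if $mx\in L$ for some $x\in\Z^n$ and some nonzero integer $m$, then $x\in V$ since $V$ is a linear subspace, whence $x\in V\cap\Z^n=L$. By the stacked-basis theorem (equivalently, Smith normal form applied to the inclusion of a saturated sublattice), $L$ extends to a $\Z$-basis $w_1,\dots,w_n$ of $\Z^n$ with $w_1,\dots,w_k$ a $\Z$-basis of $L$. Let $g$ be the matrix determined by $g(w_i)=e_i$ for all $i$; it lies in $\text{GL}(n,\Z)$ since it carries one $\Z$-basis of $\Z^n$ to another. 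If $\det g=-1$, I replace $w_n$ by $-w_n$, which is legitimate since $k<n$ forces $w_n$ to lie outside the chosen generators of $L$, thereby arranging $g\in\text{SL}(n,\Z)$.

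Finally, as $g(L)=\langle e_1,\dots,e_k\rangle_\Z$ and $g$ is $\R$-linear, I get $g(V)=g(L\otimes_\Z\R)=\langle e_1,\dots,e_k\rangle_\R$. Since every column $\Theta^j$ lies in $\widetilde{S_0}=V$, the $\Z$-module they generate satisfies $g\cdot\langle\Theta^j:j=1,\dots,2g\rangle_\Z\subset g(V)=\langle e_1,\dots,e_k\rangle_\R$, as required. The main obstacle is the rationality step: everything rests on the fact that the \emph{closure} $S_0$ is a genuine compact subgroup, which is precisely what forces $V\cap\Lambda$ to be a full-rank lattice; the subsequent extension of a primitive sublattice to a basis of $\Z^n$ and the determinant adjustment are entirely standard.
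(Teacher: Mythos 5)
Your proof is correct and follows the same overall skeleton as the paper's: show that the lift $\widetilde{S_0}$ is a rational subspace, observe that $\widetilde{S_0}\cap\Z^n$ is a primitive (saturated) sublattice of rank $k$, extend a basis of it to a $\Z$-basis of $\Z^n$, and take $g$ to be the resulting change-of-basis matrix (the paper cites Cassels for the extension step where you invoke the stacked-basis theorem; these are the same tool). The one place where the arguments genuinely diverge is the rationality step, which is the heart of the lemma. The paper argues concretely: it intersects $\widetilde{S_0}$ with the affine slices $e_i+\R^{n-k}$ and shows the resulting points have rational last coordinates, since an irrational coordinate would force the closure of $S_0$ to have dimension at least $k+1$; this exhibits $\widetilde{S_0}$ as the solution set of $n-k$ integer linear equations. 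You instead argue structurally: $S_0$ is a compact connected subgroup, the induced continuous bijection $V/(V\cap\Lambda)\to S_0$ forces $V\cap\Lambda$ to be a full-rank lattice in $V$, hence $V$ is spanned by lattice vectors. Your route is cleaner and avoids the paper's separate treatment of the coordinate-aligned case via a permutation, though it implicitly leans on the open-mapping theorem for locally compact groups (or equivalently on the standard fact that $p(V)$ is closed only if $V$ is rational); you also handle the determinant-sign adjustment explicitly, which the paper glosses over by appealing to transitivity of $\mathrm{SL}(n,\Z)$ on lattices. Both arguments are sound; yours buys brevity and robustness, the paper's buys an explicit description of $\widetilde{S_0}$ by integral equations, which it reuses later in the text.
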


\noindent Assume the lemma holds. The $\Z$-module $g\cdot \langle\Theta^j\, :\,j=1,...,2g\rangle_{\Z}$ is contained in the first factor of $\mathbb{T}^n=\mathbb{T}^k \times\mathbb{T}^{n-k}$ and then $\Theta_\rho$ cannot have maximal row rank over $\Z$. As a consequence the matrix given in the equation \ref{commat} cannot have maximal row rank over $\Z$. The general case follows by applying the same reasoning to the component $S_0^o$ of $S_0$ containing the identity which contains a finite-index $\Z$-module of $\langle\Theta^j\, :\,j=1,...,2g\rangle_{\Z}$. In the general case, $g\cdot \langle\Theta^j\, :\,j=1,...,2g\rangle_{\Z}$ is contained in $\mathbb{T}^n=\mathbb{T}^k \times F$, where $F$ is isomorphic to the finite group $\frac{\Z}{ m_1\Z}\times\cdots\times\frac{\Z}{m_{n-d}\Z}$. Let us proceed with the proof of Lemma \ref{ccm}.\\

\noindent \emph{Proof of Lemma \ref{ccm}.} If $\widetilde{S_0}^o$ is contained in $\big\langle e_{\sigma(1)},\dots,e_{\sigma(k)}\big\rangle_\R$ for some $\sigma\in \mathfrak{S}_n$ then it is sufficient to rename the coordinates. This corresponds to a matrix $g$ obtained by product of elementary matrices. Assume $\widetilde{S_0}^o$ is not contained in any such a space. Let $x_i$ be the intersection of $\widetilde{S_0}^o$ with the affine space $e_i + \mathbb{R}^{n-k}$ and let $d_i$ its Euclidean distance to $\mathbb{R}^d$. Then $x_i$ has the following form:
\[x_i=\big(0,\dots,1,\dots,0,t_1,\dots,t_{n-k}\big)\]
where $t_i\in\Q$. In fact, if this had been not true then $S_0^o$ would have been a dense  subspace of dimension $k+1$ in the torus. As a consequence $d_i\in \Q$ for any $i=1,\dots,k$ and $\widetilde{S_0}^o$ is described by $n-k$ equations with integer coefficients.
Look at the set $\widetilde{S_0}^o\,\cap\,\Z^n$. This is a lattice in $\widetilde{S_0}^o$ and there is a basis $v_1,\dots,v_k$ made of integer vectors. 
We invoke \cite[Corollary 3, pag.14]{CJ} to claim the existence of $n-k$ vectors $v_{k+1},\dots,v_n$ such that the vectors $v_1,\dots,v_n$ gathered together form a basis for $\Z^n$. Since $\slzn$ acts transitively on the space of lattices, there is $g$ such that 
\[g\cdot \langle\Theta^j\, :\,j=1,\dots,2g\rangle_{\Z} <g\cdot \widetilde{S_0} =\big\langle e_1,\dots,e_k\big\rangle_\R.
\]  This concludes the proof of Lemma \ref{ccm} and indeed the proof of Theorem \ref{codr}.
\end{proof}

\noindent From the proof we deduce that the row rank of the matrix $\Theta_\rho$ has a very explicit geometric interpretation, in fact it coincides with the dimension of the subspace containing the image of $\rho$. Of course, the proof does not depend on the presentation of $\pi_1(S)$ either on the lattice chosen. Let us prove these facts.

\begin{proof}[Independence on the chosen basis] Let $\langle\Theta^j\, :\,j=1,\dots,2g\rangle_{\Z}$ be the $\Z$-module generated by the columns of $\Theta_\rho$. In section \S\ref{ecb} we have seen that the effect of changing a basis of generators corresponds to multiply on the right the matrix $\Theta_\rho$ with a matrix $A\in\text{SL}(2g,\Z)$. Since the row rank of $\Theta_\rho$ is invariant under the action by right-multiplication of $\text{SL}(2g,\Z)$, the matrices $\Theta_\rho$ and $\Theta_\rho A$ have the same row rank. Furthermore, in the light of Corollary \ref{qfree}, $\pi\Q$-freedom is also invariant under the right action of $\slzz$. On the other hand, let $S$ be the closure of the subspace of $\T^n$ generated by the columns of $\Theta_\rho$. Its lift $\widetilde{S}$ is a linear (possibly improper) subspace of $\R^n$ described by $n-k$ equations. As the columns of $\Theta_\rho A$ satisfy the same equations, the image of unaffected by the change of basis. This proves the independence on the basis chosen.
\end{proof}

\begin{proof}[Independence on the lattice chosen] Given two lattices $\Lambda_1$ and $\Lambda_2$, there always exists an element of $A\in\text{SL}(n,\Z)$ mapping the first lattice on the second one because the action of $\text{SL}(n,\Z)$ is transitive on the set of lattices. Such a map descends to a homeomorphism of the $n$-torus and hence the $\Z$-module $\langle\Theta^j\, :\,j=1,\dots,2g\rangle_{\Z}$ projects to a dense subset of the torus if and only if its image via $A$ projects to dense subset as well. On the other hand, it is immediate to verify that the row rank of the matrix $\Theta_\rho(\Lambda_1)$ equals the one of $\Theta_\rho(\Lambda_2)$ because the row rank is invariant under the action by left-multiplication of $\text{SL}(n,\Z)$. Again, Corollary \ref{qfree} implies $\pi\Q$-freedom is invariant under the left action of $\slzn$. Hence the conclusion.
\end{proof}

\noindent We finally provide a couple of explicit examples.

\begin{ex}\label{ex1}
Let $S$ be a surface of genus $2$, and let $\rho:\pi_1(S)\longrightarrow \Bbb T^2\cong\Bbb S^1\times\Bbb S^1$ be the representation such that $\rho(a_1)=\rho(a_2)=\big( e^{i\varphi}, e^{i\varphi} \big)$, where $\varphi\in\Bbb R\setminus \pi\Bbb Q$, and $\rho(b_1)=\rho(b_2)=(1,1)$.\\

\noindent The matrix $\Theta$ has the following form
\[
\begin{pmatrix}
\varphi & 0 & \varphi & 0\\
\varphi & 0 & \varphi & 0
\end{pmatrix}
\] If $\gamma\in\pi_1(S)$, then $\rho(\gamma)=\rho(a_1)^{k_1}\rho(b_1)^{k_2}\rho(a_2)^{k_3}\rho(b_2)^{k_{4}}$ with $k_i\in\Bbb Z$. Consider the vector $v=(k_1,k_2,k_3,k_4)$, then 
\begin{equation}
    \Theta \cdot v=\big((k_1+k_3)\varphi,(k_1+k_3)\varphi\big).
\end{equation}

\noindent Viewing the $2$-torus as a complex with one $0$-cell, four $1$-cells and one $2$-cell, the image of $\rho$ is densely contained the main diagonal. Both projections are dense in $\Bbb S^1$, but the image does not fill $\Bbb T^2$. Notice that the row rank of $\Theta$ over $\Z$ is one as the dimension of the smallest subspace containing $\rho\big(\pi_1(S)\big)$. 
\end{ex}

\begin{ex}\label{ex2}
Let $S$ be a surface of genus $2$, and let $\rho:\pi_1\Sigma\longrightarrow \Bbb T^2\cong\Bbb S^1\times\Bbb S^1$ be the representation such that $\rho(a_1)=\rho(a_2)=\big(e^{i\varphi},1\big)$ and $\rho(b_1)=\rho(b_2)=\big(1,e^{i\varphi}\big)$ with $\varphi\in\Bbb R\setminus \pi\Bbb Q$.\\

\noindent The matrix $\Theta$ has the following form
\[
\begin{pmatrix}
\varphi & 0 & \varphi & 0\\
0 & \varphi & 0 & \varphi
\end{pmatrix}
\] If $\gamma\in\pi_1(S)$, then $\rho(\gamma)=\rho(a_1)^{k_1}\rho(b_1)^{k_2}\rho(a_2)^{k_3}\rho(b_2)^{k_{4}}$ with $k_i\in\Bbb Z$. Consider the vector $v=(k_1,k_2,k_3,k_4)$, then 
\begin{equation}
    \Theta \cdot v=\big((k_1+k_3)\varphi,(k_2+k_4)\varphi\big).
\end{equation}

\noindent Viewing the $2$-torus as a complex with one $0$-cell, four $1$-cells and one $2$-cell, the image of $\rho$ densely fills the torus. Notice that the rank of $\Theta$ is two in this case and both projections are dense.
\end{ex}

\section{$\spz$-action and orbit closures}\label{ratthm}

\noindent The symplectic group $\spz$ acts on the homological representation space \textnormal{Hom}$\big(\mathrm{H}_1(S,\Z),\T^n\big)$ by precomposition. We have seen in section \ref{matrep} that, up to a choice of a symplectic basis, this latter space identifies with the space $\text{M}\big(n,2g;\T\big)$. In this section we would like to study the orbit closures of an element of $\text{M}\big(n,2g;\T\big)$ under the action of $\spz$. The first thing we notice is that a subset $\Omega\subset \text{M}\big(n,2g;\R\big)$ is invariant under the action $\spz\ltimes \text{M}\big(n,2g; 2\pi\Z\big)$ if and only if its projection onto $\text{M}\big(n,2g;\T\big)$ is $\spz$-invariant. This simple remark legitimates us to study the orbit closures on the universal cover, that is $\text{M}\big(n,2g;\R\big)$.

\smallskip

\noindent Let us consider the group $G=\spr\ltimes \text{M}\big(n,2g;\R\big)$. Given two elements $(A,\,a)$ and $(B,\,b)$, their product is defined as follows $ (A,\,a)\cdot(B,\,b)=(AB,\, bA^{-1}+a)$. The group $G$ acts transitively on the space $\text{M}\big(n,2g;\R\big)$ with the action being defined as $(A,\,a)\cdot p= pA^{-1}+a$ $-$ indeed a point $p\in \text{M}\big(n,2g;\R\big)$ may be regarded as the couple $(I,p)$. There is a natural identification between the space $\text{M}\big(n,2g;\R\big)$ with the $G/U$, where $U$ is the stabiliser of any point. It is straightforward to check that the of the zero matrix is nothing but $\spr$. The subgroup $\Gamma=\spz\ltimes \text{M}\big(n,2g; 2\pi\Z\big)$ is a lattice in $G$ and acts in the obvious way on $G/U$. Under these conditions we are in the right position to apply Ratner's Theorem, see \cite{RM}, which we state as follows according to our setting.

\begin{rthm}
Let $G,U,\Gamma$ as above and let $p\in \text{M}\big(n,2g;\R\big)=G/U$ such that $p=\gamma\,U$. Then there is a closed subgroup $H_\gamma$ such that the following holds.
\begin{itemize}
\item $U_\gamma=\gamma\,U\,\gamma^{-1} \le H_\gamma$,
\item $\Gamma\,\cap\,H_\gamma$ is a lattice in $H_\gamma$ and
\item $\overline{\Gamma\cdot p}=\Gamma\,H_\gamma\,p$.
\end{itemize}
\end{rthm}

\noindent Notice that $\gamma$ can be taken as $(I,\,p)$. Since our goal here is to classify the closures of $\Gamma$-orbits of any point in the space of matrices $\text{M}\big(n,2g;\R\big)$, we just need to figure out which subgroups of $G$ may be provided by Ratner's Theorem. To this purpose, let us consider the projection $\Phi:\spr\ltimes \text{M}\big(n,2g;\R\big) \longrightarrow \spr$. Given a point $p$ in $\text{M}\big(n,2g;\R\big)$,  the group $H_\gamma$ is isomorphic to the semidirect product $H_\gamma=U_\gamma\ltimes K_\gamma$, where $K_\gamma$ is defined as $\textnormal{ker}\,\Phi\,\cap H_\gamma$, that is the kernel of the mapping $\Phi$ restricted to $H_\gamma$. Notice that the image of $H_\gamma$ under the mapping $\Phi$ is the whole group $\spr$ because $H_\gamma\ge U_\gamma\cong\spr$. In particular, $H_\gamma\cong \spr\ltimes K_\gamma$. \\

\noindent Let us proceed on understanding $K_\gamma$. The first thing we notice is that any change of lattice $h\in\slzn$ extends to a homeomorphism $\phi_h$ of $G$ defined as
\[ \phi_h:G\longrightarrow G, \qquad \phi_h(A,\,a)=(A,ha).
\] 
\noindent This is an automorphism of $G$ and its restriction to \textnormal{ker}$\,\Phi$, where $\Phi$ is the projection just defined above, is linear and corresponds to a change of lattice in $\text{M}\big(n,2g;\R\big)$. In particular, the relation
\begin{equation}
\Gamma\cdot(h\,p)=\phi_h\big(\Gamma\cdot p\big)
\end{equation} holds for any $p\in\text{\emph{M}}\big(n,2g;\R\big)$. As a consequence of Lemma \ref{ccm}, there is an element $h\in\slzn$ such that $h\cdot p$ is of the following form 
\begin{equation}\label{redmat}
\begin{pmatrix}
\theta_{1,1} & \cdots & \theta_{1,2g}\\
\vdots &  & \vdots\\
\theta_{k,1} & \dots & \theta_{k,2g}\\
\pi\, q_{k+1,1} & \cdots & \pi\, q_{k+1,2g} \\
\vdots &  & \vdots \\
\pi\, q_{n,1} &\cdots& \pi\, q_{n,2g}
\end{pmatrix}=
\begin{pmatrix}
\Theta_o\\
\pi Q
\end{pmatrix}
\end{equation}
\noindent  where 
\begin{itemize}
\item $\Theta_o\in \text{M}\big(k,2g;\R\big)$ for some $0\le k\le n$,
\item $\pi Q\in \text{M}\big(n-k,2g;\pi\Q\big)$,
\item the vectors $\big\{(\theta_{i,1},\dots,\theta_{i,2g})\big\}_{i=1,\dots,k}$ lines are linearly independent over $\Bbb Z$, and
\item $\big\langle(\theta_{i,1},\dots,\theta_{i,2g})\,:\,i=1,\dots,k\big\rangle$ is $\pi\Q$-free;
\end{itemize}
\noindent and hence it is sufficient to study $K_\gamma$ for $\gamma=(I,p)$ and $p$ is a matrix in the form \ref{redmat}. Furthermore, it will be sufficient to study the closures of $\Gamma$-orbits for matrices in these form. We also notice that $K_\gamma$ is a linear subspace of $\text{M}\big(n,2g;\R\big)$ invariant under the action of $U_\gamma$ by conjugation. In fact, suppose $\gamma=(I,\,p)$, let $q=(I,\,q)\in K_\gamma$ be any point and let $(A, p-pA^{-1})$ be a generic element of $U_\gamma$. Then 
\[ (A, p-pA^{-1})\cdot(I,\,q)\cdot(A^{-1}, p-pA)=(I,\,qA^{-1})\in K_\gamma
\] as claimed. The following Lemma implies our main Theorem \ref{thmbf} for representations of closed surface groups into the unit circle $\Bbb S^1$.

\begin{lem}\label{Lemma 4.1}
Let $\overline{\theta}=\big(\theta_1,\dots,\theta_{2g}\big)\in\Bbb R^{2g}$. If $\overline{\theta}\in\pi\Bbb Q^{2g}$, then $\big(\spz\ltimes2\pi\Z^{2g}\big)\cdot\overline{\theta}$ is discrete in $\R^{2g}$. Otherwise, if $\overline{\theta}\in\R^{2g}\setminus \pi\Q^{2g}$, then $\spz\cdot \overline{\theta}$ is dense in $\T^{2g}$ and hence $\big(\spz\ltimes2\pi\Z^{2g}\big)\cdot\overline{\theta}$ is dense in $\R^{2g}$.
\end{lem}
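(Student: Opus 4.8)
The plan is to split along the stated dichotomy and to exploit the Ratner-theoretic description of $\overline{\Gamma\cdot\overline\theta}$ developed just above, with $\Gamma=\spz\ltimes 2\pi\Z^{2g}$. The rational case is immediate: if $\overline\theta\in\pi\Q^{2g}$, I would clear denominators and write $\overline\theta=\tfrac{\pi}{N}(a_1,\dots,a_{2g})$ with $a_i\in\Z$ and $N\in\N$. Since every $A\in\spz$ has integer entries and $2\pi\Z^{2g}\subset\tfrac{\pi}{N}\Z^{2g}$, the whole orbit $\big(\spz\ltimes 2\pi\Z^{2g}\big)\cdot\overline\theta$ is contained in the discrete set $\tfrac{\pi}{N}\Z^{2g}$, hence is discrete.

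For the irrational case $\overline\theta\in\R^{2g}\setminus\pi\Q^{2g}$, at least one coordinate $\theta_k$ lies outside $\pi\Q$; say it belongs to the $i_0$-th handle, so one of $\theta_{2i_0-1},\theta_{2i_0}$ is irrational. I would first produce a genuine continuum inside the orbit closure. The copy of $\slz$ acting on the pair $(\theta_{2i_0-1},\theta_{2i_0})$ contains a unipotent whose $n$-th iterate fixes the irrational coordinate and translates the other by $n$ times the irrational one, leaving all remaining coordinates unchanged. By Kronecker's density of $\{n\alpha \bmod 2\pi\}$ for $\alpha\notin\pi\Q$, these points are dense in an entire circle through $\overline\theta$; hence $\overline{\Gamma\cdot\overline\theta}$ contains an uncountable set.

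Now I would feed this into the structure theorem recalled above. Ratner's theorem gives $\overline{\Gamma\cdot\overline\theta}=\Gamma\,H_\gamma\,\overline\theta=\Gamma\big(\overline\theta+K_\gamma\big)$, where $K_\gamma\subset\R^{2g}$ is a linear subspace invariant under the conjugation action of $U_\gamma\cong\spr$; as computed before the lemma, this action is $q\mapsto qA^{-1}$, namely the standard representation of $\spr$ on $\R^{2g}$. The main point is that this representation is irreducible: $\spr$ acts transitively on $\R^{2g}\setminus\{0\}$, so any nonzero invariant subspace is everything, whence $K_\gamma\in\{0,\R^{2g}\}$. If $K_\gamma=\{0\}$ then $\overline{\Gamma\cdot\overline\theta}=\Gamma\cdot\overline\theta$ would be countable, contradicting the circle produced in the previous step; therefore $K_\gamma=\R^{2g}$ and $\overline{\Gamma\cdot\overline\theta}=\R^{2g}$. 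This yields density of $\big(\spz\ltimes 2\pi\Z^{2g}\big)\cdot\overline\theta$ in $\R^{2g}$, and projecting modulo $2\pi\Z^{2g}$ gives density of $\spz\cdot\overline\theta$ in $\T^{2g}$.

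I expect the only real obstacle to be the clean identification of the dichotomy for $K_\gamma$: everything hinges on the irreducibility of the defining $\spr$-representation together with the observation that the accumulation circle is an honest continuum, which is exactly what excludes the degenerate alternative $K_\gamma=\{0\}$ in the irrational case. The elementary choice of the correct within-handle unipotent guaranteeing the desired irrational translation, and the bookkeeping for the semidirect-product action, are routine.
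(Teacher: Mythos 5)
Your proof is correct, but it reaches the density half by a genuinely different route than the paper. The rational case is the same in substance: you trap the orbit in the discrete set $\tfrac{\pi}{N}\Z^{2g}$, while the paper traps it in the lattice $\Lambda^{2g}$ generated by the entries of $\overline\theta$ together with $2\pi$; either works. For $\overline\theta\notin\pi\Q^{2g}$ the paper argues entirely by hand: after an $\spz$-move making every entry lie in $\R\setminus\pi\Q$, it uses the subgroup $\slz\times\cdots\times\slz\le\spz$ and, handle by handle, an explicit unimodular matrix with rows $(k_i,1)$ and $(k_ih_i-1,h_i)$ realizing a two-step Kronecker approximation of an arbitrary target pair $(\theta^*_{2i-1},\theta^*_{2i})$; the lemma is deliberately proved as an elementary, Ratner-free base case. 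You instead feed the problem into the Ratner framework set up just before the lemma: $K_\gamma$ is a linear subspace of $\R^{2g}$ invariant under the conjugation action $q\mapsto qA^{-1}$ of $U_\gamma\cong\spr$, this standard representation is irreducible (transitivity of $\spr$ on $\R^{2g}\setminus\{0\}$), so $K_\gamma$ is either $\{0\}$ or all of $\R^{2g}$, and a single within-handle unipotent together with equidistribution of $\{n\theta_k\bmod 2\pi\}$ for $\theta_k\notin\pi\Q$ puts an uncountable set in the orbit closure, ruling out $K_\gamma=\{0\}$. Your version is shorter and is essentially the natural $n=1$ specialization of the paper's later analysis of $K_\gamma$ in Proposition \ref{kgclass}; what it gives up is the explicitness of the approximating symplectic matrices and independence from Ratner's theorem, and the irreducibility shortcut is special to $n=1$, since for $n>1$ the module $\text{M}\big(n,2g;\R\big)$ decomposes into $n$ copies of the standard representation. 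Both arguments are valid.
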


\begin{proof}
Let $\Lambda$ be the subgroup of $\R$ generated by the entries of $\overline{\theta}$ and consider $\Lambda^{2g}$. The first claim is easy to establish. In this case $\Lambda^{2g}$ is a lattice in $\R^{2g}$ containing $2\pi\Z^{2g}$ and preserved by $\spz$. Now observe that the $\spz$-orbit of $\overline{\theta}$ is contained in $\Lambda^{2g}$. Suppose $\overline{\theta}\in\R^{2g}\setminus\pi\Q^{2g}$, hence there exists $\theta_i\in\R\setminus\pi\Q$. There is an element in $\spz$ such that all the entries are $\R\setminus\pi\Q$. We may assume $\theta_i\in\,[0,2\pi)$ for all $i=1,\dots,2g$. Let $\theta^*\in\Bbb T^{2g}$ be any point. For each couple $(\theta_{2i-1},\,\theta_{2i})$, where $i=1,\dots,g$, there are two integers $k_i,\,h_i$ such that the couple $\big(k_i\,\theta_{2i-1}+\theta_{2i},\,(k_i\,h_i-1)\theta_{2i-1}+h_i\,\theta_{2i}\big)$ is closed to $(\theta_{2i-1}^*,\,\theta_{2i}^*)$. Therefore the $\spz$-orbit of $\overline{\theta}$ is dense in $\T^{2g}$ and hence $\spz\ltimes2\pi\Z^{2g}\cdot\overline{\theta}$ is dense in $\R^{2g}$ as desired.
\end{proof}

\noindent Before proving the general case we need the following proposition on which we describe the group $K_\gamma$.

\begin{prop}\label{kgclass}
Let $p\in X$ be any point in the form given in the equation \eqref{redmat} and let $k$ the number of lines not in $\pi\Q^{2g}$. Let $H_\gamma$ be the group provided by Ratner's Theorem, where $\gamma=(I,\,p)$. Then $K_\gamma$ is trivial or $K_\gamma=\text{\emph{M}}\big(k,2g;\R\big)$.
\end{prop}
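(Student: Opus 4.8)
The plan is to determine $K_\gamma$ in three moves: first fix its shape as a representation of $\spr$, then confine it to the first $k$ rows, and finally force an all-or-nothing dichotomy by a dimension count against Ratner's Theorem. Recall from the discussion above that $K_\gamma$ is a linear subspace of $\text{M}\big(n,2g;\R\big)$ invariant under the conjugation action of $U_\gamma$, which is precisely right multiplication $q\mapsto qA^{-1}$ by $A\in\spr$. Writing $V=\R^{2g}$ for the standard representation of $\spr$ on the rows, we identify $\text{M}\big(n,2g;\R\big)\cong\R^n\otimes V$ with $\spr$ acting through the second factor. Since $V$ is absolutely irreducible, $\mathrm{End}_{\spr}(V)=\R$, so by Schur's lemma every $\spr$-invariant subspace has the form $W\otimes V$ for a unique $W\le\R^n$; concretely $K_\gamma$ is carved out by a fixed system of $\R$-linear relations among the rows. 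Moreover, the hypothesis of Ratner's Theorem that $\Gamma\cap H_\gamma$ be a lattice in $H_\gamma=U_\gamma\ltimes K_\gamma$ forces $\text{M}\big(n,2g;2\pi\Z\big)\cap K_\gamma$ to be a full lattice in $K_\gamma$, whence $W$ is defined over $\Q$.

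Next I would show that $W\le\langle e_1,\dots,e_k\rangle$, that is, $K_\gamma$ is supported on the first $k$ rows. For any $(B,b)\in\Gamma$ the bottom block of $(B,b)\cdot p=pB^{-1}+b$ equals $\pi QB^{-1}+b_{\mathrm{bot}}$; since $Q$ is rational with some common denominator $D$, the matrices $B,B^{-1}\in\spz$ are integral, and $b_{\mathrm{bot}}\in 2\pi\Z^{(n-k)\times 2g}$, this block lies in the discrete set $\tfrac{\pi}{D}\Z^{(n-k)\times 2g}$. Hence the last $n-k$ rows remain discrete along the whole orbit and its closure, leaving no room for continuous variation there. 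Consequently $K_\gamma\subseteq\text{M}\big(k,2g;\R\big)$, i.e.\ $W\le\langle e_1,\dots,e_k\rangle$.

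The crux is the dichotomy. Suppose $r=\dim W$ satisfies $0<r<k$. Using the change-of-lattice freedom of \ref{ecl}, applied through a block matrix $h=\mathrm{diag}(g_0,\mathrm{I})$ with $g_0\in\text{SL}(k,\Z)$, which commutes with the $\spz$-action and carries orbit closures to orbit closures, I would arrange $W=\langle e_1,\dots,e_r\rangle$ while preserving the bottom block $\pi Q$ and the $\pi\Q$-free, $\Z$-independent structure of the top block: each new top row is a nonzero $\Z$-combination of the old ones, hence still $\pi\Q$-free by $\pi\Q$-freeness of the collection (Lemma \ref{invrora}, Proposition \ref{qfree}). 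In particular the $(r+1)$-st row $\theta^{(r+1)}$ of $\Theta_o$ is then $\pi\Q$-free. The orbit closure contains $\overline{\spz\cdot(p+K_\gamma)}$, and since $K_\gamma$ leaves the first $r$ rows free while carrying the rest by the common $B^{-1}$, its projection onto the rows $1,\dots,r+1$ equals $(\T^{2g})^{r}\times\overline{\spz\cdot\theta^{(r+1)}}=(\T^{2g})^{r+1}$ by Lemma \ref{Lemma 4.1}. This image has dimension $2g(r+1)$, strictly larger than $\dim\overline{\Gamma\cdot p}=\dim H_\gamma p=\dim K_\gamma=2gr$ provided by Ratner's Theorem, a contradiction. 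Therefore $r\in\{0,k\}$, and undoing the change of lattice (which preserves $\langle e_1,\dots,e_k\rangle$) yields that $K_\gamma$ is trivial or $K_\gamma=\text{M}\big(k,2g;\R\big)$.

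I expect the last step to be the main obstacle: the representation-theoretic normal form and the discreteness of the bottom rows are routine, but the all-or-nothing conclusion hinges on converting the hypothesis that the $(r+1)$-st row is $\pi\Q$-free into genuine density via Lemma \ref{Lemma 4.1}, and then playing the resulting excess of dimension against the rigidity furnished by Ratner's Theorem. Particular care will be needed to verify that the change of lattice does not disturb the reduced form \eqref{redmat} and that the dimension of the homogeneous orbit closure is correctly read off as $\dim K_\gamma$.
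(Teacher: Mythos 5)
Your proof is correct in outline, but it reaches the dichotomy by a genuinely different mechanism than the paper. The shared skeleton is the same: $K_\gamma$ is an $\spr$-invariant linear subspace that must meet $\text{M}\big(n,2g;2\pi\Z\big)$ in a lattice, hence (your Schur's-lemma normal form) $K_\gamma=W\otimes\R^{2g}$ with $W\le\R^n$ rational, and the discreteness of the bottom block confines $W$ to the first $k$ coordinates — the paper asserts this last point tersely ("$\pi Q$ does not give any contribution") where you prove it. The divergence is in the final step. The paper argues \emph{from below}: it exhibits the real span $V$ of the $\spz$-orbit directions (of dimension $2gl$, $l$ the real row rank of $\Theta_o$), asserts $K_\gamma\supseteq V$, and concludes that the minimal rational subspace containing $V$ is all of $\text{M}\big(k,2g;\R\big)$ because the rows of $\Theta_o$ are $\Z$-independent and $\pi\Q$-free. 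You instead argue \emph{by contradiction from above}: if $0<\dim W<k$, normalize $W=\langle e_1,\dots,e_r\rangle$ by a change of lattice, invoke Lemma \ref{Lemma 4.1} to make the $(r+1)$-st row move densely, and contradict the dimension of $\Gamma H_\gamma p$. Your route buys a cleaner justification of the containment the paper leaves implicit ($K_\gamma\supseteq V$ is never really proved there), at the cost of some bookkeeping you should make explicit: (i) to conclude that the projection of the orbit closure onto the first $r+1$ rows is all of $(\T^{2g})^{r+1}$ rather than merely containing $(\T^{2g})^r\times(\text{a countable dense set})$ — which by itself has dimension only $2gr$ — you must pass to the compact torus picture, where the projection of the compact orbit closure is closed; and (ii) the bound $\dim\overline{\Gamma\cdot p}=\dim K_\gamma$ requires noting that $\overline{\Gamma\cdot p}=\Gamma H_\gamma p=\Gamma p+K_\gamma$ is a \emph{countable} union of translates of $p+K_\gamma$, so a Hausdorff-dimension (or Baire) argument is needed to say it cannot project onto something of strictly larger dimension. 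Both points are routine to fill, so the argument stands.
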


\begin{proof}[Proof of Proposition \ref{kgclass}]
Let $p$ be any point in $\text{M}\big(n,2g;\R\big)$. Assume $p$ be different from the zero matrix for which the claim trivially holds. Let us begin with the case $p=\pi Q\in\text{M}\big(n,2g;\pi\Q\big)$, that means $k=0$. We claim $K_\gamma$ to be trivial. Let $\gamma=(I,p)$ and let $H_\gamma$ be the group provided by Ratner's Theorem. The orbit $\Gamma\cdot p$ lies in the subgroup of $\text{M}\big(n,2g;\R\big)$ generated by the matrices $\pi q_{ij}\,E_{ij}$, where $\pi q_{ij}$ are the entries of $p$, which is discrete and closed. This means that $\overline{\Gamma\cdot p}=\Gamma\cdot p$ and implies $H_\gamma$ is the stabiliser of $p$. Therefore $H_\gamma=U_\gamma$ and hence $K_\gamma$ is trivial. Notice that this argument generalises the first case of the previous Lemma \ref{Lemma 4.1}. Let us now assume $k>0$. The linear space $K_\gamma$ is completely determined by $\Theta_o$, indeed the block $\pi Q$ does not give any contribution. In this case, the orbit $\Gamma\cdot p$ is no longer closed and the $\spz$-orbit of $p$ is contained in some linear subspace of $\text{M}\big(k,2g;\R\big)$ of dimension $2g\,l$, where $l$ is the dimension of the linear space generated by the rows of $\Theta_o$. Hence $K_\gamma$ contains $V$ as a proper subspace. We can notice that $V$ is $\spr$-invariant but $V\,\cap\,\text{M}\big(k,2g;2\pi\Z\big)$ is not a lattice because the $\Z$-module generated by the rows of $\Theta_o$ is $\pi\Q$-free. For the same reason, the minimal linear space containing $V$ and a lattice is $\text{M}\big(k,2g;\R\big)$, hence $K_\gamma=\text{M}\big(k,2g;\R\big)$.
\end{proof}

\noindent From the proof of Proposition \ref{kgclass} we can deduce the following corollary.

\begin{cor}
Let $p\in \text{\emph{M}}\big(n,2g;\R\big)$ be any point in the form given in the equation \eqref{redmat} and let $k$ the number of lines not in $\pi\Q^{2g}$. There exists a closed connected subgroup $H\le \Bbb T^n$ of dimension $k$ such that $\overline{\Gamma\cdot p}$ projects to a finite union of inhomogeneous torii of dimension $k$ corresponding to cosets of $H$. In particular, the modular orbit of a dense representation $\rho:\mathrm{H}_1(S,\Z)\longrightarrow \Bbb T^n$ is dense in the representation space.
\end{cor}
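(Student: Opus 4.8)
The plan is to read off the orbit closure directly from Ratner's Theorem together with the classification of $K_\gamma$ in Proposition \ref{kgclass}, and then to project it to the representation space $\text{M}\big(n,2g;\T\big)$. Writing $\gamma=(I,p)$, Ratner's Theorem gives $\overline{\Gamma\cdot p}=\Gamma H_\gamma p$ with $H_\gamma=U_\gamma\ltimes K_\gamma$, where $U_\gamma$ is precisely the stabiliser of $p$ and, by Proposition \ref{kgclass}, $K_\gamma$ is either trivial (if $k=0$) or the subspace $\text{M}\big(k,2g;\R\big)\le \text{M}\big(n,2g;\R\big)$ of matrices supported on the top $k$ rows (if $k>0$). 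Since $U_\gamma=\{(A,\,p-pA^{-1})\}$ fixes $p$, while $K_\gamma$ is $U_\gamma$-invariant and acts by translation, a short computation of $(A,p-pA^{-1})\cdot(I,q)\cdot p=p+qA^{-1}$ gives $H_\gamma\cdot p=p+K_\gamma$; concretely this is the set of matrices whose bottom $n-k$ rows equal the rational block $\pi Q$ and whose top $k$ rows are arbitrary.

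Next I would apply $\Gamma=\spz\ltimes\text{M}\big(n,2g;2\pi\Z\big)$ and pass to the torus. The lattice translations descend to the identity on $\text{M}\big(n,2g;\T\big)$, so only the $\spz$-part matters. A symplectic $B$ acts on columns and hence preserves the splitting into top and bottom rows: it carries the free top block of $p+K_\gamma$ to another full top block, while it sends $\pi Q$ to $\pi Q B^{-1}$, still rational with denominator dividing that of $Q$. Consequently $\pi Q B^{-1}\bmod 2\pi$ takes only finitely many values as $B$ ranges over $\spz$. I would conclude that the projection of $\overline{\Gamma\cdot p}$ to $\text{M}\big(n,2g;\T\big)$ is a finite union of cosets of the subtorus obtained as the image of $K_\gamma$; identifying the top $k$ rows with representations into the coordinate subtorus $H\cong\T^k\le\T^n$, these cosets are exactly the inhomogeneous tori of dimension $k$ attached to $H$ in the statement, with $H$ trivial and the union a finite set of points when $k=0$.

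Finally, for the ``in particular'' clause I would specialise to a dense $\rho$. By Theorem \ref{codr} density forces $\text{rk}_{\Z}\big(\Theta_\rho\big)=n$ together with $\pi\Q$-freeness of the rows, which is exactly the condition that the reduction \eqref{redmat} carries no rational block, i.e. $k=n$. Then $K_\gamma=\text{M}\big(n,2g;\R\big)$ is the entire space and $\overline{\Gamma\cdot p}=\text{M}\big(n,2g;\R\big)$, so its projection is the whole representation space. Since the reduction used a change of lattice $h\in\slzn$, and the induced $\phi_h$ is a homeomorphism satisfying $\Gamma\cdot(hp)=\phi_h\big(\Gamma\cdot p\big)$, density is unaffected by undoing $h$; Proposition \ref{orbcomp} then translates this into density of the modular orbit of $\rho$.

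The step I expect to be the main obstacle is the bookkeeping in the second paragraph: verifying that the symplectic action genuinely respects the top/bottom block decomposition produced by the lattice change and that the resulting family of rational bottom blocks is finite modulo $2\pi$, together with the clean identification of the image of the $2gk$-dimensional space $K_\gamma$ with the dimension-$k$ subgroup $H\le\T^n$ governing the image of $\rho$. Once this correspondence is pinned down, both the structural description of $\overline{\Gamma\cdot p}$ and the density conclusion for dense representations follow formally.
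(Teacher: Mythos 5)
Your argument is correct and follows exactly the route the paper intends: the paper states this corollary as a direct consequence of the proof of Proposition \ref{kgclass}, i.e.\ Ratner's Theorem gives $\overline{\Gamma\cdot p}=\Gamma H_\gamma p=\Gamma\cdot(p+K_\gamma)$, the dichotomy for $K_\gamma$ identifies the free top block, the rational bottom block $\pi Q B^{-1}$ contributes only finitely many values mod $2\pi$, and $k=n$ for dense $\rho$ via Theorem \ref{codr}. Your write-up merely makes explicit the bookkeeping (stabiliser computation, invariance of the row splitting under the right $\spz$-action, undoing the lattice change $h$) that the paper leaves implicit.
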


\noindent This corollary implies Theorem \ref{thmbf2} and indeed Theorem \ref{thmbf}. In the appendix, we shall study the modular orbits by applying a direct approch without rely on Ratner's Theory. 

\medskip

\section{An application: Approximation result}\label{apres}

\noindent The aim of this final section consists in showing Proposition \ref{aiffd} and indeed Theorem \ref{thmbf4}. Let us begin by recalling the statement of Kronecker's Theorem as formulated in \cite[Section 26.19(e)]{HR}. The reader may also consult \cite[Section 1.12(iii)]{BM} for another one-dimensional version of Kronecker's theorem.

\begin{kthm}\label{kthm}
Let $b^{(i)}=\big(b_1^{(i)},\dots,b_m^{(i)}\big)$, with $i=1,\dots,n$, be vectors of $\R^m$ such that $b^{(1)},\dots,b^{(n)},$ $\pi e_1,\dots,\pi e_m$ are linearly independent over $\Q$ in the vector space $\R^m$ (where the $e_j$'s form the canonical basis of $\R^m$). Let $a_1,\dots,a_n$ be any real numbers and let $\varepsilon$ be a positive number. Then there is an element $\big(k_1,\dots,k_m\big)\in\Z^m$ such that
\begin{equation}\label{keq}
 \Big| a_i-\sum_{l=1}^m k_lb_l^{(i)}\Big|<\varepsilon \quad \textnormal{mod } 2\pi
\end{equation} for every $i=1,\dots,n$.
\end{kthm}

\noindent For a real $a$, the expression $|a|<\varepsilon \textnormal{ mod } 2\pi$ means that $|a-2k\pi|<\varepsilon$ for some integer $k$. From the equation \eqref{keq} above, one can easily infer the equivalent estimate
\begin{equation}
\Big| \Big| \,\big(a_1,\dots,a_n\big)^t-2\pi\big(h_1,\dots,h_n\big)^t-B\,\big(k_1,\dots,k_m\big)^t\,\Big| \Big|<C\varepsilon
\end{equation} where $\big(h_1,\dots,h_n\big)\in\Z^n$, $B$ is the matrix having $b^{(i)}$'s as rows, $C$ is a real constant depending only on $n$ and $| |\cdot||$ is any norm on $\mathbb{R}^n$. Kronecker's theorem generalises to simultaneous approximation of $l$ given real vectors $a^{(j)}=\big(a_{1j},\dots,a_{nj}\big)^t$ where $j=1,\dots,l$. Indeed, for any $\varepsilon>0$ there is a matrix $K\in\text{M}\big(m,l;\Z\big)$ such that 
\begin{equation}
    \Big| \Big|\, A-2\pi H -B\, K\,\Big| \Big|<C\varepsilon
\end{equation} 
 where $A$ is the matrix having $a^{(j)}$'s as columns, $H\in\text{M}\big(n,l;\Z\big)$ and $C$ is a constant depending only on $l,n$. That is 
\begin{equation}
    \Big| \Big|\, A -B\, K\,\Big| \Big|<\varepsilon\textnormal{ mod } 2\pi. 
\end{equation}
\smallskip

\noindent Let $S$ be a closed surface of genus greater than zero, let $\rho:\pi_1(S)\longrightarrow \T^n$ be a representation and let $\Theta_\rho$ be the associated matrix in the sense of Definition \ref{mr}. 

\begin{propnn} The following are equivalent.
\SetLabelAlign{center}{\null\hfill\textbf{#1}\hfill\null}
\begin{itemize}[leftmargin=1.75em, labelwidth=1.5em, align=center, itemsep=\parskip]
\item[ \textnormal{1.}] \textnormal{Mod}$(S)\cdot \rho$ is dense in the representation space.
\item[ \textnormal{2.}]  For any matrix $A\in\textnormal{M}\big(n,2g;\R\big)$ and any $\varepsilon>0$ there is a matrix $g\in\spz$ such that 
\[\big| \big| A-\Theta_\rho\, g\big| \big|<\varepsilon\textnormal{ mod } 2\pi.
\]
\end{itemize} 
\end{propnn}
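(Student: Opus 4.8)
The plan is to recognise that, once the explicit dictionary of Section \ref{matrep} and Section \ref{transact} is in place, the asserted equivalence is nothing more than the translation of a topological density statement into the explicit approximation statement of item (2). First I would invoke the homeomorphism $\imath$ sending each representation $\rho$ to its matrix $\Theta_\rho$, which identifies the representation space $\textsf{Hom}\big(\pi_1S,\T^n\big)$ with the quotient torus $\text{M}\big(n,2g;\T\big)=\text{M}\big(n,2g;\R\big)/\text{M}\big(n,2g;2\pi\Z\big)$. Since $\imath$ is a homeomorphism, the orbit $\text{Mod}(S)\cdot\rho$ is dense in the representation space if and only if its image $\imath\big(\text{Mod}(S)\cdot\rho\big)$ is dense in $\text{M}\big(n,2g;\T\big)$; this is the content of item (1) after transport through $\imath$.

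Next I would use the explicit form of the transferred action recorded in Section \ref{transact}, namely $A\cdot\Theta_\rho=\Theta_\rho A^{-1}$ for $A\in\spz$. Consequently the image of the orbit is the set $\{\Theta_\rho A^{-1}\,:\,A\in\spz\}$, and since $g\mapsto g^{-1}$ is a bijection of the group $\spz$, this set coincides with $\{\Theta_\rho g\,:\,g\in\spz\}$. The only thing needing care here is keeping track of the inverse, which is harmless precisely because we are acting by a group rather than a mere set of matrices.

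It then remains to unwind what density means in $\text{M}\big(n,2g;\T\big)$. Every point of this torus is represented by some $A\in\text{M}\big(n,2g;\R\big)$, and the quotient distance from $[A]$ to $[\Theta_\rho g]$ is $\inf_{H}\|A-\Theta_\rho g-H\|$ taken over $H\in\text{M}\big(n,2g;2\pi\Z\big)$, which is by definition the quantity ``$\|A-\Theta_\rho g\|<\varepsilon$ mod $2\pi$'' used throughout Section \ref{apres}. Hence the orbit accumulates on $[A]$ to within $\varepsilon$ exactly when there exists $g\in\spz$ with $\|A-\Theta_\rho g\|<\varepsilon$ mod $2\pi$, and quantifying over all $A\in\text{M}\big(n,2g;\R\big)$ and all $\varepsilon>0$ yields precisely the equivalence of (1) and (2). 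The two implications are symmetric and read off in the same breath from this identification, so there is no genuine obstacle to overcome; the whole point is a change of vocabulary, and the only steps requiring attention are the bookkeeping ones, namely the group inverse appearing in the action and the identification of the ``mod $2\pi$'' norm estimate with the quotient metric on $\text{M}\big(n,2g;\T\big)$.
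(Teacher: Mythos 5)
Your proposal is correct and follows essentially the same route as the paper: identify the representation space with $\text{M}\big(n,2g;\T\big)$ via $\rho\mapsto\Theta_\rho$, use the triviality of the Torelli action to realise the modular orbit as $\big\{\Theta_\rho\,g : g\in\spz\big\}$, and unwind density of that orbit as the ``mod $2\pi$'' approximation statement. Your extra care about the group inverse in the action and the identification of the quotient metric is sound bookkeeping that the paper handles in the same spirit.
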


\begin{proof}[Proof of Proposition \ref{aiffd}]
Each representation is identified with its associated matrix and the representation space with $\text{M}\big(n,2g;\T \big)$. Suppose \textnormal{Mod}$(S)\cdot \Theta_\rho$ is not dense in the representation space. Then there is an open set $U$ such that \textnormal{Mod}$(S)\cdot \Theta_\rho\,\cap\, U=\phi$. Let $A$ be any matrix in $U$ and $\varepsilon$ a strictly positive real number such that the open ball $B_\varepsilon(A)\subset U$. Then, for any $g\in\textnormal{Mod}(S)$ the following estimate $\big| \big| A-\Theta_\rho\, g\big| \big|>\varepsilon\textnormal{ mod } 2\pi$ holds. As the action of the Torelli subgroup is trivial by Proposition \ref{torga}, the action of the mapping class group coincides with the action of $\spz$. Therefore, Theorem \ref{thmbf4} implies Theorem \ref{thmbf}.

\smallskip

\noindent Suppose \textnormal{Mod}$(S)\cdot \Theta_\rho$ dense in the representation space. Then, for any $A\in\text{M}\big(n,2g;\T\big)$ and for any $\varepsilon>0$ the mapping class group orbit intersects the open set $B_\varepsilon(A)\subset \textnormal{M}\big(n,2g;\T\big)$, \emph{i.e.} there is an element $g\in\textnormal{Mod}(S)$ such that $g^{-1}\cdot\Theta_\rho=\Theta_\rho\,g\in B_\varepsilon(A)$. In particular, $\big| \big| A-\Theta_\rho\, g\big| \big|<\varepsilon\textnormal{ mod } 2\pi$. Once again, by Proposition \ref{torga}, the matrix $g$ can be taken in $\spz$ and so Theorem \ref{thmbf} implies Theorem \ref{thmbf4} as desired. \qedhere \\
\end{proof}

\appendix

\section{Dense Orbits and further discussion}\label{mtoht}

\noindent In this appendix we are going to prove Theorem \ref{thmbf} for almost every representation without relying on Ratner's Theorem. We begin consider the genus one case and we shall use it to extend the discussion to surfaces of arbitrary genus.

\subsection{Direct proof of Theorem \ref{thmbf2} for almost every representations}\label{pmtgoc} The set of matrices $\textnormal{M}\big(n,2g;\T\big)$ contains, as a proper subset, the space $\mathcal{D}$ of all of those matrices of the following form
\begin{equation}\label{specmat}
\begin{pmatrix}
\theta_{1} &  \theta_{2} & \cdots  & \theta_{2i-1} & \theta_{2i} & \cdots & \theta_{2g-1} & \theta_{2g}\\
\vdots &  \vdots & & \vdots & \vdots & & \vdots &\vdots\\
\lambda_j\theta_{1} &  \lambda_j\theta_{2} & \cdots & \lambda_j\theta_{2i-1} & \lambda_j\theta_{2i} & \cdots & \lambda_j\theta_{2g-1} &  \lambda_j\theta_{2g}\\
\vdots & \vdots & & \vdots & \vdots & & \vdots &\vdots\\
\lambda_n\theta_{1} &\lambda_n\theta_{2} & \cdots  & \lambda_n\theta_{2i-1} & \lambda_n\theta_{2i} & \cdots & \lambda_n\theta_{2g-1} & \lambda_n\theta_{2g}\\
\end{pmatrix}
\end{equation}
\text{}\\
where $\big(\theta_1,\,\theta_2,\,\dots,\theta_{2g-1},\theta_{2g}\big)\in\R^{2g}\setminus\pi\Q^{2g}$ is the lift of $\big(e^{i\theta_1},\,e^{i\theta_2},\,\dots, e^{i\theta_{2g-1}},\,e^{i\theta_{2g}}\big)\in\T^{2g}$ contained in $[0,2\pi)^{2g}$ and the reals $\{1,\,\lambda_2,\,\dots,\lambda_n\}\subset\R$ are linearly independent over $\Q$. 

\begin{lem}
$\mathcal{D}$ is dense in $\textnormal{M}\big(n,2g;\T\big)$.
\end{lem}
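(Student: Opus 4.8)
The plan is to reduce the statement to a simultaneous approximation and to exploit the fact that, although a matrix of the form \eqref{specmat} has real rank one (all its rows are real multiples $\lambda_i\,\overline{\theta}$ of a single vector $\overline{\theta}=(\theta_1,\dots,\theta_{2g})$), its image in $\textnormal{M}\big(n,2g;\T\big)$ need not be thin: scaling one vector by a real parameter traces out a one-parameter subgroup of $\T^{2g}$ whose closure, under a suitable irrationality hypothesis, is the whole torus. Concretely, fix $A=(a_{ij})$ with $a_{ij}\in[0,2\pi)$ and $\varepsilon>0$; I would produce an element of $\mathcal{D}$ within $\varepsilon$ of $A$ entrywise modulo $2\pi$ by first selecting its base vector $\overline{\theta}$ so that its first row matches the first row of $A$, and then choosing the scalars $\lambda_i$ so that $\lambda_i\,\overline{\theta}$ approximates the $i$-th row of $A$.

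First I would fix $\overline{\theta}$. Since the $\Q$-linearly independent tuples form the complement of the countably many rational hyperplanes $\{v : \langle m,v\rangle=0\}$, $m\in\Z^{2g}\setminus\{0\}$, they are dense; hence one can pick $\overline{\theta}\in[0,2\pi)^{2g}$ within $\varepsilon$ of $(a_{11},\dots,a_{1,2g})$ modulo $2\pi$ with $\theta_1,\dots,\theta_{2g}$ linearly independent over $\Q$. For $2g\ge 2$ this independence forces $\overline{\theta}\notin\pi\Q^{2g}$, so $\overline{\theta}$ is admissible for $\mathcal{D}$, and by the classical density criterion for one-parameter subgroups of a torus it makes the orbit $\{\,t\,\overline{\theta}\bmod 2\pi\Z^{2g}:t\in\R\,\}$ dense in $\T^{2g}$. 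Setting $\lambda_1=1$, I would then choose $\lambda_2,\dots,\lambda_n$ inductively: at step $i$, the set $G_i$ of those $\lambda\in\R$ for which $\lambda\,\overline{\theta}$ lies within $\varepsilon$ of $(a_{i1},\dots,a_{i,2g})$ modulo $2\pi$ is open (a preimage of an open ball under scaling) and nonempty (by the density just established), hence uncountable, whereas $\mathrm{span}_{\Q}\{1,\lambda_2,\dots,\lambda_{i-1}\}$ is countable; picking $\lambda_i\in G_i$ outside this span keeps $1,\lambda_2,\dots,\lambda_i$ linearly independent over $\Q$ while approximating the $i$-th row. The matrix with rows $\lambda_i\,\overline{\theta}$ then lies in $\mathcal{D}$ and is within $\varepsilon$ of $A$, so $\mathcal{D}$ is dense.

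The one genuinely delicate point is the joint fulfilment of the two competing constraints on the scalars: the approximation requirement, which is analytic and rests on the density of the irrational flow $t\mapsto t\,\overline{\theta}$, and the arithmetic requirement that $1,\lambda_2,\dots,\lambda_n$ be $\Q$-linearly independent, which is mandatory for membership in $\mathcal{D}$. These are reconciled by the cardinality argument above, each approximation set being uncountable while each family of rational dependencies is countable. I expect the verification that each $G_i$ is nonempty, equivalently that the subgroup generated by $\overline{\theta}$ is dense in $\T^{2g}$, to be the single place where the hypothesis on $\overline{\theta}$ is essential.
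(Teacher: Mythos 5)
Your argument is correct, but it runs in the opposite direction to the paper's. The paper fixes an admissible tuple $1,\lambda_2,\dots,\lambda_n$ once and for all, packages the whole slice of $\mathcal{D}$ with those scalars as the image of the $2g$-dimensional linear subspace $\overline{\theta}\mapsto(\overline{\theta},\lambda_2\overline{\theta},\dots,\lambda_n\overline{\theta})$ of $\R^{2gn}$, and observes that the $\Q$-independence of the $\lambda_i$ prevents this subspace from lying in a proper rational subspace, so its projection to $\T^{2gn}$ is already dense; density of $\mathcal{D}$ follows from a single slice. You instead fix the base vector $\overline{\theta}$ first (generic and close to the first row of the target $A$) and vary the scalars, invoking the density of the irrational line $t\mapsto t\,\overline{\theta}$ in $\T^{2g}$ to make each set $G_i$ of admissible scalars open and nonempty, and then an uncountable-versus-countable argument to choose $\lambda_i\in G_i$ avoiding the $\Q$-span of the earlier scalars. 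Your route is more elementary in its input (only the classical density criterion for a line in $\T^{2g}$, rather than for a $2g$-plane in $\T^{2gn}$) and more constructive, producing an explicit approximant to any prescribed $A$; it also sidesteps a small point the paper glosses over, namely that the image of its map $\varphi$ contains points with $\overline{\theta}\in\pi\Q^{2g}$ that must be discarded before landing in $\mathcal{D}$. The paper's version buys a slightly stronger conclusion, that a single choice of scalars already gives a dense family. All the individual steps you use (density of $\Q$-independent tuples, openness and nonemptiness of $G_i$, the cardinality argument) check out, so the proposal stands as a valid alternative proof.
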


\begin{proof}
Let $\lambda_2,\dots,\lambda_n$ real numbers such that $1,\lambda_2,\dots,\lambda_n$ are linearly independent over $\Q$. Let us consider the mapping $\varphi:\R^{2g}\longrightarrow \T^{2gn}$ defined as
\[ \Big(\theta_1,\theta_2,\dots,\theta_{2g-1},\theta_{2g}\Big)\mapsto \Big(\big(e^{i\theta_1},\dots,e^{i\theta_{2g}}\big),\big(e^{i\lambda_2\theta_1},\dots,e^{i\lambda_2\theta_{2g}}\big),\dots,\big(e^{i\lambda_n\theta_1},\dots,e^{i\lambda_n\theta_{2g}}\big)\Big).
\]  This mapping factors through a mapping $\overline{\varphi}:\R^{2g}\longrightarrow \R^{2gn}$ such that $\varphi=\exp\,\circ\,\overline{\varphi}$ and $\exp$ is the exponential mapping thought as in equation \eqref{projexp} introduced in section \S\ref{matrep}. The image of $\overline{\varphi}$ is a $2g$-dimensional linear subspace. Since the reals $\{1,\lambda_2,\dots,\lambda_n\}$ are linearly independent over $\Q$, then the projection via the exponential mapping is dense in $\T^{2gn}$. The space $\mathcal{D}$ is defined as the union of the images for each possible subset $\{\lambda_2,\dots,\lambda_n\}\subset\R$ such that $1,\lambda_2,\dots,\lambda_n$ are linearly independent over $\Q$. Therefore $\mathcal{D}$ is dense.
\end{proof}

\noindent The following lemma is easy to establish and the proof is left to the reader.

\begin{lem}
$\mathcal{D}$ is $\spz$-invariant.
\end{lem}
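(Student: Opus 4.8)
The plan is to exploit the rank-one structure of the matrices in $\mathcal{D}$ together with the fact that $\spz$ acts on $\text{M}\big(n,2g;\T\big)$ by right multiplication (see Subsection \ref{transact}). First I would record that every $M\in\mathcal{D}$ admits a real lift of the form $M=\lambda\,\theta^t$, where $\lambda=(1,\lambda_2,\dots,\lambda_n)^t\in\R^n$ has entries that are linearly independent over $\Q$ and $\theta=(\theta_1,\dots,\theta_{2g})^t\in\R^{2g}\setminus\pi\Q^{2g}$; indeed the $(i,j)$-entry of the matrix displayed in \eqref{specmat} is exactly $\lambda_i\theta_j$. Since it suffices to prove $\mathcal{D}\,g\subseteq\mathcal{D}$ for every $g\in\spz$ (applying this both to $g$ and to $g^{-1}\in\spz$ then yields the equality $\mathcal{D}\,g=\mathcal{D}$), the entire verification reduces to understanding the single product $M\,g$.

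The key observation is the identity
\[ M\,g=\lambda\,\theta^t\,g=\lambda\,(g^t\theta)^t,
\]
so that right multiplication by $g$ preserves the rank-one shape and leaves the column vector $\lambda$ untouched, merely replacing $\theta$ by $\theta'=g^t\theta$. In particular the tuple $\lambda$ is unchanged, so its entries remain linearly independent over $\Q$ and its first coordinate is still $1$; hence $M\,g$ again has precisely the form displayed in \eqref{specmat}, \emph{provided} we can guarantee $\theta'\notin\pi\Q^{2g}$.

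It remains to verify that $\theta'=g^t\theta\notin\pi\Q^{2g}$, and this is where the only genuine content lies, namely the integrality of $\spz$. Since $g\in\spz\subset\text{SL}(2g,\Z)$, the matrix $g$ has integer entries and determinant $1$, whence so does $g^t$, and therefore $(g^t)^{-1}=(g^{-1})^t$ is again an integer matrix. Consequently, were $g^t\theta\in\pi\Q^{2g}$, we could write $\theta=(g^t)^{-1}(g^t\theta)\in(g^t)^{-1}\,\pi\Q^{2g}\subseteq\pi\Q^{2g}$, contradicting the standing assumption $\theta\notin\pi\Q^{2g}$. Thus $\theta'\notin\pi\Q^{2g}$, so $M\,g\in\mathcal{D}$, and $\mathcal{D}$ is $\spz$-invariant.

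I do not expect any serious obstacle here: the argument is entirely linear-algebraic and the verifications are routine. The single point demanding care is that one must keep the lift $\theta'=g^t\theta$ as the new parameter \emph{without} reducing it modulo $2\pi$: because the scalars $\lambda_i$ are irrational, reducing $\theta'$ into $[0,2\pi)^{2g}$ would shift the remaining rows by $2\pi\lambda_i k_j\notin 2\pi\Z$ and so destroy the exact rank-one form recorded in \eqref{specmat}. Working throughout with arbitrary real lifts of the torus-valued entries circumvents this subtlety completely.
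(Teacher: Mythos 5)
The paper gives no proof of this lemma (it is ``left to the reader''), so there is nothing to compare against; your argument is correct and is surely the intended one. Writing $M=\lambda\,\theta^t$ and using $M\,g=\lambda\,(g^t\theta)^t$ together with the integrality of $(g^t)^{-1}$ to preserve the condition $\theta\notin\pi\Q^{2g}$ is exactly the right linear-algebraic content. Your closing caveat is also well taken, and in fact essential: with the normalization $\theta\in[0,2\pi)^{2g}$ written into \eqref{specmat}, the set would \emph{not} be invariant (reducing $g^t\theta$ modulo $2\pi$ shifts the $i$-th row by $2\pi\lambda_i k^t\notin 2\pi\Z^{2g}$), so one must read $\mathcal{D}$ as the union over admissible $\{\lambda_i\}$ of the images of the maps $\varphi$ on all of $\R^{2g}$ --- which is precisely how the paper defines $\mathcal{D}$ in the proof of the preceding density lemma. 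Your proof is complete under that (correct) reading.
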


\noindent Let us consider first surfaces of genus one. Let $T$ be the genus one surface, let $\rho:\pi_1T\longrightarrow \T^n$ be a dense representation and let $\Theta_\rho$ be its associated matrix with respect to some basis $\big\{\alpha,\beta\big\}$ and the standard lattice of $\R^n$. Let $\Omega_\rho$ be the $\slz$-orbit of $\Theta_\rho$ in $\text{M}\big(n,2;\T\big)$. The associated matrix $\Theta_\rho$ has the following form:

\begin{equation}\label{c11}
\Theta_\rho=\begin{pmatrix}
\theta_{1} &  \theta_{2}\\
\vdots &  \vdots\\
\lambda_i\theta_{1} &  \lambda_i\theta_{2}\\
\vdots & \vdots \\
\lambda_n\theta_{1} &\lambda_n\theta_{2}
\end{pmatrix}
\end{equation}
where $\big(\theta_1,\,\theta_2\big)\in\R^2$ is the lift of $\big(e^{i\theta_1},\,e^{i\theta_2}\big)\in\T^2$ contained in $[0,2\pi)^2$ and $\lambda_i\in\R$, for any $i=2,\dots,n$, are linearly independent over $\Q$. Set
\[ \overline{\Theta}_\rho=\begin{pmatrix}
\Theta_\rho \\ 
\pi\cdot\text{I}_{2}.
\end{pmatrix}
\]

\noindent Since the representation $\rho$ is assumed to have a dense image, the matrix $\overline{\Theta}_\rho$ has maximal row rank, that is $\text{rk}_{\Z}=n+2$. This implies  the following properties of the matrix $\Theta_\rho$ above.\\

\SetLabelAlign{center}{\null\hfill\textbf{#1}\hfill\null}
\begin{enumerate}[label=\text{\textnormal{\ref{c11}}}\textnormal{.\roman*}, leftmargin=3em, labelwidth=2.5em, align=center, itemsep=\parskip]
\item The real numbers $\theta_1$ and $\theta_2$ cannot be both elements of $\pi\Q$. If this were the case, the row rank of the matrix $\overline{\Theta}_\rho$ would fail to be maximal, contradicting our assumptions. In the case one on them is an element of $\pi\Q$, we can change the basis in such a way they are both elements of $\R\setminus\pi\Q$. Indeed, assume without loss of generality that $\theta_2\in\pi\Q$. The Dehn twist $\mathcal{T}_\alpha$ along $\alpha$ maps the curve $\beta$ to $\alpha\beta$ and hence $\rho(\beta)$ is mapped to $\rho(\alpha\beta)$. The second column of $\Theta_\rho$ changes accordingly and the element of place $(1,2)$ of $\Theta_{\mathcal{T}_\alpha\cdot\rho}$ is nothing else that $\theta_1+\theta_2$. As $\theta_1\notin\pi\Q$ the same necessarily holds for $\theta_1+\theta_2$. In what follows, we shall assume both $\theta_1,\theta_2\notin\pi\Q$.\\
\item The real numbers $\pi,\theta_1,\dots,\lambda_i\theta_1,\dots,\lambda_n\theta_1$ are linearly independent over $\Q$. Indeed, if this were not the case then one can easily check that $\overline{\Theta}_\rho$ has not maximal rank. This implies that the subgroup of $\T^n$ generated by the vector $\big(\theta_1,\dots,\lambda_i\theta_1,\dots,\lambda_n\theta_1\big)$ is dense in $\T^n$, see \cite[Exercise 1.13]{BM}, meaning that $\T^n$ is monothetic (that is $\T^n$ contains a dense cyclic subgroup). The same holds also for the real numbers $\pi,\theta_2,\dots,\lambda_i\theta_2,\dots,\lambda_n\theta_2$.\\
\end{enumerate}

\noindent We begin with considering the $\slz$ action on the space $\text{M}\big(n,2;\R\big)$ seen as the universal cover of $\text{M}\big(n,2;\T\big)$, see also Remark \ref{mr}. Given the matrix $\Theta_\rho$ as in \ref{c11}, there is a unique lift, say $\Theta(\rho)$, in $\text{M}\big(n,2;\R\big)$ which is still of the form \ref{c11}. Notice that such a matrix is the unique one who has all the entries in the interval $[0,\,2\pi)$. Let us finally denote with $\Omega(\rho)$ the $\slz$-orbit of $\Theta(\rho)$ in $\text{M}\big(n,2;\R\big)$.\\

\noindent Since $\Theta(\rho)$ is of the form \ref{c11}, an easy computation shows that the matrix $A\cdot\Theta(\rho)\in\Omega(\rho)$ is still of the form \ref{c11} for any $A\in\slz$, that is the $i$-th row of $A\cdot\Theta(\rho)$ is $\lambda_i$-times $\big(\theta_1,\,\theta_2\big)A^{-1}$. Therefore, we can deduce that $\Omega(\rho)$ is contained in some proper linear subspace $S$ of $\R^{2n}$. In fact, the coefficients of any matrix $A\cdot\Theta(\rho)\in\Omega(\rho)$ satisfy the following homogeneous linear system
\begin{equation}  \mathcal{S}:
\begin{cases}
\begin{aligned}
\theta_{2,1}-&\lambda_2\theta_{1,1}&=0\\
&\vdots&\\
\theta_{n,1}-&\lambda_n\theta_{1,1}&=0\\
\theta_{22}-&\lambda_2\theta_{12}&=0\\
&\vdots&\\
\theta_{n2}-&\lambda_n\theta_{12}&=0
\end{aligned}
\end{cases}
\end{equation}
\noindent in $2n-2$ equations and $2n$ variables. Hence, $S$ is defined as the full space of solutions of the linear system $\mathcal{S}$. Let us consider then the subspace $S$. Since each of the $\lambda_i$ is taken as an element of $\R\setminus\Q$, the subspace $S$ necessarily meets the lattice $\text{M}\big(n,2;2\pi\Z\big)$ only at the origin. Therefore, the projection of the subspace $S$ into the space $\text{M}\big(n,2;\T\big)$ densely fills a closed subspace $K$ of $\text{M}\big(n,2;\T\big)$. We finally claim that $K$ cannot be a proper subspace. We notice that, due to the nature of the linear system $\mathcal{S}$, the subspace $S$ splits as the direct product $V_1\times V_2$ inside the vector space $\R^n\times\R^n\cong\text{M}\big(n,2;\R\big)$. Therefore, the image of $S$ into the space $\text{M}\big(n,2;\T\big)$ lies inside a closed subgroup of the form $H_1\times H_2$, where $H_i<\text{M}\big(n,1;\T\big)\cong\T^n$, for $i=1,2$. Notice that $K$ is a proper subgroup of $\text{M}\big(n,2;\T\big)$ if and only if $H_i$ is a proper subgroup of $\text{M}\big(n,1;\T\big)$. Therefore the proof of the final claim boils down to show that $H_i$ cannot be a proper subgroup for both $i=1,2$. As the group $H_1$ contains the vector $\exp\big(\theta_1,\lambda_2\theta_1,\dots,\lambda_n\theta_1\big)$, then it contains also the subgroup $\big\{\exp\big(t\big(\theta_1,\lambda_2\theta_1,\dots,\lambda_n\theta_1\big)\big)\,|\, t\in\Z\big\}$ and thus its closure which we know to be equal to the full space $\T^n$. In the same fashion, we can prove $H_2=\T^n$. Therefore $K=\text{M}\big(n,2;\T\big)$ and the $\slz$-orbit of $\Theta_\rho$ is dense in $\text{M}\big(n,2;\T\big)$ as desired.\\

\noindent The general case for surfaces of genus greater than one works similarly and we rely on the following

\begin{rmk}Let $S$ be any surface of genus $g\ge2$. The representation space \textnormal{Hom}$\big(\pi_1(S),\T^n\big)$ naturally splits as the direct sum of $g$ copies of \textnormal{Hom}$\big(\pi_1T,\T^n\big)$  where $T$ denote the $2$-torus (we do not use here the blackboard notation since $T$ is considered only as a topological surface regardless of its group structure). The basis $\big\{\alpha_1,\beta_1,\dots,\alpha_g,\beta_g\big\}$ of $\pi_1(S)$ we fixed satisfies moreover to the equalities $i\big(\alpha_i, \alpha_j\big)=i\big(\beta_i, \beta_j\big)=0,$ and $i\big(\alpha_i, \beta_j\big)=\delta_{ij}$ for all $i,j$ with $1\le i,j\le g$. We may associate to any representation $\rho:\pi_1(S)\longrightarrow \T^n$ the $g$-tuple of representations $\big(\rho_1,\dots,\rho_g\big)$ where $\rho_i$ is the restriction of $\rho$ to the handle generated by $\alpha_i,\beta_i$. Such a mapping defines then an isomorphism 
\begin{equation}
    \textnormal{Hom}\big(\pi_1(S),\T^n\big)\cong \bigoplus_{i=1}^g \textnormal{Hom}\big(\langle\alpha_i,\beta_i\rangle,\T^n\big).
\end{equation}
\noindent which depends on the basis chosen. This decomposition is a consequence of the fact that a surface of genus $g$ is the connected sum of a surface of genus $g-1$ and a torus $T$ along with the property that each representation $\rho$ sends all simple closed separating curves to the identity. A recursive argument leads to the desire conclusion. We finally notice that the group $\slz\times\cdots\times\slz<\spz$ introduced in Remark \ref{specsubgroup} acts diagonally on this space; the $i-$th copy of $\slz$ acts on \textnormal{Hom}$\big(\langle\alpha_i,\beta_i\rangle,\T^n\big)$.
\end{rmk}

\noindent Given any matrix of the form as in equation \eqref{specmat}, up to change the matrix with any element of $\spz$ we may assume, without loss of generality, that at least one of $\theta_{2i-1},\,\theta_{2i}\notin\pi\Q$. Under this condition all the observations A.2.(i-ii-iii) hold for each pair of columns 
\begin{equation}
\begin{pmatrix}
\theta_{2i-1} &  \theta_{2i}\\
\vdots &  \vdots\\
\lambda_j\theta_{2i-1} &  \lambda_j\theta_{2i}\\
\vdots & \vdots \\
\lambda_n\theta_{2i-1} &\lambda_n\theta_{2i}
\end{pmatrix}
\end{equation}

\noindent Therefore the action of the $g$-times product $\slz\times\cdots\times\slz<\spz$ provides a dense orbit inside the space $\text{M}\big(n,2g;\T\big)$ as desired. 

\subsection{Finding curve generating dense subgroups}\label{fdv} All the representations $\rho$ considered in subsection \ref{pmtgoc} above are characterized by the following property: Each column of the associated matrix $\Theta_\rho$ generates a dense subgroup of $\T^n$. It turns out that for any such a representation one can find infinitely many curves whose image generates a dense subgroup in $\T^n$. This lead the authors to ask themselves: Given a dense representation $\rho:\pi_1(S)\longrightarrow \T^n$, can we find a simple closed curve $\gamma$ such that $\langle\rho(\gamma)\rangle$ is dense in $\T^n$? Remind that a vector $\big(e^{i\theta_1},\,\dots,\,e^{i\theta_{n}}\big)\in\T^{n}$ generates a dense subgroup if and only if $\pi,\theta_1,\dots,\theta_n$ are linearly independent over $\Q$. As a corollary of Lemma \ref{Lemma 4.1} we deduce the following.

\begin{lem}
Let $\rho:\pi_1(S)\longrightarrow \Bbb S^1$ be a dense representation. Then there always exists a simple closed curve $\gamma$ such that $\overline{\langle\rho(\gamma)\rangle}=\Bbb S^1$.
\end{lem}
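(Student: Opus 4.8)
The plan is to reduce the whole question to the homological datum attached to $\rho$. Since $\Bbb S^1=\T^1$ is abelian, the value $\rho(\gamma)$ depends only on the class $[\gamma]\in\homrep\cong\Z^{2g}$: writing $[\gamma]=(k_1,\dots,k_{2g})$ in the fixed symplectic basis and $\Theta_\rho=(\theta_1,\dots,\theta_{2g})$ for the associated row matrix of Definition \ref{mr}, we have $\rho(\gamma)=\exp\big(i\sum_{j=1}^{2g}k_j\theta_j\big)$. A cyclic subgroup $\langle e^{i\psi}\rangle$ of $\Bbb S^1$ is dense exactly when $\psi\notin\pi\Q$. Thus the lemma reduces to exhibiting an integer vector $v=(k_1,\dots,k_{2g})$ with $\sum_j k_j\theta_j\notin\pi\Q$ that is moreover realized by a \emph{simple} closed curve.

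First I would extract from density precisely the arithmetic input I need. Specialising Theorem \ref{codr} to $n=1$, the representation $\rho$ is dense if and only if the single row $\Theta_\rho$ spans a $\pi\Q$-free $\Z$-module and has $\mathrm{rk}_{\Z}(\Theta_\rho)=1$. For one row, $\pi\Q$-freeness says that $k\,\Theta_\rho\in\pi\Q^{2g}$ forces $k=0$; since $k\,\Theta_\rho\in\pi\Q^{2g}$ means every $k\theta_j\in\pi\Q$, this is equivalent to asserting that \emph{not all} of $\theta_1,\dots,\theta_{2g}$ lie in $\pi\Q$ (and the rank condition is then automatic, the row being nonzero). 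Hence I may fix an index $j_0$ with $\theta_{j_0}\notin\pi\Q$ and take $v=e_{j_0}$; then $\sum_j k_j\theta_j=\theta_{j_0}\notin\pi\Q$, so the arithmetic requirement is met.

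It remains only to realize $e_{j_0}$ by an embedded curve, and here the crucial ingredient is the classical statement — the content of Lemma \ref{midlema} — that every primitive class of $\homrep$ is represented by a nonseparating simple closed curve. Since $e_{j_0}$ belongs to a basis it is primitive, so such a $\gamma$ exists; indeed one may simply take $\gamma$ to be the $j_0$-th curve of the geometric symplectic basis, which is simple closed by construction and satisfies $[\gamma]=e_{j_0}$. For this $\gamma$ one gets $\rho(\gamma)=e^{i\theta_{j_0}}$ with $\theta_{j_0}\notin\pi\Q$, whence $\overline{\langle\rho(\gamma)\rangle}=\Bbb S^1$.

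The one place demanding genuine attention is the \emph{simplicity} of the curve, i.e.\ the passage from an abstract homology class to an embedded representative; for the circle this is supplied for free by the generators themselves, so there is no real obstruction. The same reduction does not work verbatim for $\T^n$ with $n>1$: there one must produce a single primitive $v$ making the $n$ inner products $\sum_j k_j\theta^{(1)}_j,\dots,\sum_j k_j\theta^{(n)}_j$ jointly $\Q$-independent of $\pi$, a genuine Diophantine condition, and it is this strengthening that Lemma \ref{midlema} is designed to address.
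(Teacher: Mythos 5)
Your argument is correct and is precisely the intended one: the paper itself offers no written proof (it defers to a ``Lemma'' whose reference is broken in the source), but the surrounding discussion makes clear that the expected reasoning is exactly your specialisation of Theorem \ref{codr} to $n=1$ — density forces some entry $\theta_{j_0}$ of the row $\Theta_\rho$ to lie outside $\pi\Q$, and evaluating $\rho$ on the corresponding curve of the geometric symplectic basis (which is simple and closed by construction) yields a generator of a dense cyclic subgroup of $\Bbb S^1$. Your closing observation that the realization-by-a-simple-curve step is free here because one can use the basis curves themselves, while for $\T^n$, $n\ge 2$, the analogous statement genuinely fails, is likewise consistent with the counterexamples the paper records immediately after this lemma.
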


\noindent However, for $n\ge2$, the scenario changes completely. Indeed, for any $n$ we can find examples of dense representations which do not have any curve generating a dense subgroup in $\T^n$.

\begin{ex} Let $S$ be a surface of genus $g$ and $\rho:\pi_1(S)\longrightarrow \Bbb T^2$ be the representation having as the associated matrix $\Theta_\rho\in \text{M}\big(2,2g;\T\big)$ the following
\[
\begin{pmatrix}
1 & 1 & 0 & 0 & 0 & 0 & \cdots & \cdots & 0 & 0\\
0 & 0 & 1 & 1 & 0 & 0 & \cdots & \cdots & 0 & 0
\end{pmatrix}\in \text{M}\big(2,2g;\T\big).
\] One can show that $\overline{\Theta}_\rho$ has maximal rank and hence $\rho$ is a dense representation. However, no curve is applied by $\rho$ to a vector generating a dense subgroup.
\end{ex}

\begin{ex} Let $S$ be a surface of genus $g$ and $\rho:\pi_1(S)\longrightarrow \Bbb T^n$ be the representation having as the associated matrix $\Theta_\rho\in \text{M}\big(2,2g;\T\big)$ the following
\[
\begin{pmatrix}
1 & 1 & 0 & 0 & 0 & 0 & \cdots & \cdots & 0 & 0\\
0 & 0 & 1 & 1 & 0 & 0 & \cdots & \cdots & 0 & 0\\
\theta_3 & 0 & 0 & 0 & 0 & 0 & \cdots & \cdots & 0 & 0\\
\vdots &  &  &  &  &  &  &  &  & \vdots\\
\theta_n & 0 & 0 & 0 & 0 & 0 & \cdots & \cdots & 0 & 0
\end{pmatrix}\in \text{M}\big(n,2g;\T\big),
\] where $1,\theta_3,\dots,\theta_n$ are linearly independent over $\Q$. One can show that $\overline{\Theta}_\rho$ has maximal rank and hence $\rho$ is a dense representation. However, no curve is applied by $\rho$ to a vector generating a dense subgroup.\\
\end{ex}

\section{Surfaces with one puncture}\label{swpc}
\noindent Let us now discuss the case of the one-holed torus $\Sigma$. We shall denote $\pi_1\Sigma\cong\langle \alpha,\beta\rangle$ the fundamental group of $\Sigma$. Also in this case the choice of a representation consists in choosing for each generator an element of $\T^n$. The representation space \textnormal{Hom}$\big(\pi_1\Sigma,\T^n\big)$ trivially identifies with the space $\T^n\times \T^n$. For each choice of an element $\bf{c}$, the relative representation variety \textnormal{Hom}$_{\bf{c}}\big(\pi_1\Sigma,\T^n\big)$ is defined as the preimage of $\bf{c}$ via the commutator map $k:\T^n\times \T^n\to\T^n$. Thus, as a consequence of the abelian property, the relative representation space is empty for any $\textbf{c}\neq (1,\dots,1)$ and coincides with the full representation variety when $\textbf{c}= (1,\dots,1)$. Once again, the action of $\T^n$ by inner automorphisms is trivial and hence the character variety trivially coincides with the representation space. As a consequence, the space \textnormal{Hom}$\big(\pi_1\Sigma,\T^n\big)$ naturally identifies with the space \textnormal{Hom}$\big(\pi_1T,\T^n\big)$. The equalities $\text{Mod}(T)=\text{Mod}(\Sigma)=\text{SL}(2,\Z)$ are well-known and the actions of Mod$(T)$ and Mod$(\Sigma)$ on the representation spaces associated to $T$ and $\Sigma$ respectively coincide. Therefore, we have the following proposition.

\begin{prop}\label{equivprop}
Theorem \ref{thmbf} and Theorem \ref{thmbf2} hold for the torus $T$ if and only if they hold for the one-holed torus $\Sigma$.
\end{prop}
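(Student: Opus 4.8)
The plan is to assemble the identifications recorded just above into a single $\slz$-equivariant homeomorphism between the two representation spaces that moreover preserves the property of being a dense representation. Once this is done, the two biconditionals ``image dense $\iff$ modular orbit dense'' asserted by Theorems \ref{thmbf} and \ref{thmbf2} become literally the same statement for $T$ and for $\Sigma$, and the equivalence follows at once in both directions.

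First I would fix the homeomorphism. Since $\pi_1\Sigma\cong F_2=\langle\alpha,\beta\rangle$ and $\pi_1 T=\langle\alpha,\beta\mid[\alpha,\beta]\rangle$, and since the commutator relation is vacuous for the abelian target $\T^n$, the evaluation map $\rho\mapsto(\rho(\alpha),\rho(\beta))$ identifies each of $\textsf{Hom}(\pi_1\Sigma,\T^n)$ and $\textsf{Hom}(\pi_1 T,\T^n)$ with $\T^n\times\T^n\cong\T^{2n}$. Composing the two identifications gives a homeomorphism $\iota$ between the representation spaces under which a representation and its $\iota$-image have the same image in $\T^n$, because the subgroup generated by $\rho(\alpha),\rho(\beta)$ does not depend on whether $\alpha,\beta$ are read as generators of $F_2$ or of $\Z^2$. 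Hence $\rho$ is a dense representation if and only if $\iota(\rho)$ is.

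It then remains to check that $\iota$ is equivariant, after which the result is immediate: $\iota$ carries $\slz$-orbits to $\slz$-orbits and, being a homeomorphism, dense orbits to dense orbits, so density of the modular orbit transfers as well. As recalled above, $\text{Mod}(T)=\text{Mod}(\Sigma)=\slz$, each acting by precomposition with the induced automorphism of its fundamental group; and since $\T^n$ is abelian, each action factors through the induced map on homology, by the same reasoning as in Proposition \ref{orbcomp}. The two homology groups $\mathrm{H}_1(\Sigma,\Z)$ and $\mathrm{H}_1(T,\Z)$ are both the standard $\slz$-module $\Z^2$, so the two actions on $\T^{2n}$ coincide under $\iota$. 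This equivariance step is the only point that is not purely formal, but it presents no real obstacle once one observes that both mapping class group actions are visible on homology through one and the same module $\Z^2$. Combining the density-preservation of the previous paragraph with this equivariance yields Proposition \ref{equivprop}.
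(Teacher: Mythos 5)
Your argument is correct and follows essentially the same route as the paper: both identify $\textsf{Hom}(\pi_1\Sigma,\T^n)$ and $\textsf{Hom}(\pi_1 T,\T^n)$ with $\T^{2n}$ via evaluation on generators (the commutator relation being vacuous for an abelian target), observe that this identification preserves the image subgroup and hence density of representations, and use $\text{Mod}(T)=\text{Mod}(\Sigma)=\slz$ acting through the common homology module $\Z^2$ to transfer density of modular orbits. Your explicit justification of the equivariance step is if anything slightly more detailed than the paper's, which asserts the coincidence of the two actions directly.
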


\noindent More generally, the main results of the present work extend to surfaces of higher genus and with one boundary component. Indeed, let $S_{g,1}$ be a surface a surface of genus $g$ and one boundary component. We have already seen above that this is true for the one-holed torus $\Sigma$, Proposition \ref{equivprop}. The general claim follows because, since$\T^n$ is abelian, one can establish an identification between the representations spaces \textnormal{Hom}$\big(\pi_1(S),\T^n\big)$ and \textnormal{Hom}$\big(\pi_1\big(S_{g,1}\big),\T^n\big)$. Since the mapping class group coincides with the pure mapping class group for one-puncture surfaces the following proposition also holds.

\begin{prop}\label{equivprop2}
Theorem \ref{thmbf} and Theorem \ref{thmbf2} hold for a closed surface of genus $g$ if and only if they hold for the one-holed surface of genus $g$.\\
\end{prop}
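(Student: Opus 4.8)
The plan is to reduce both statements, for the closed surface $S$ and for the one-holed surface $S_{g,1}$, to one and the same dynamical problem: the $\spz$-action on $\T^{2gn}$. The genus-one case is already settled by Proposition \ref{equivprop}, so I would assume $g\ge 2$ and treat the two surfaces in parallel, filling the puncture of $S_{g,1}$ to recover $S$.

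First I would identify both representation spaces with $\T^{2gn}$ using a common symplectic basis $\big\{\alpha_1,\beta_1,\dots,\alpha_g,\beta_g\big\}$. For the closed surface this is the identification of Section \ref{tcv}, where the relation $\prod_i[\alpha_i,\beta_i]=1$ is automatic by abelianity. For $S_{g,1}$ the fundamental group is free of rank $2g$, so a representation is an unconstrained choice of $2g$ elements of $\T^n$; thus $\textsf{Hom}\big(\pi_1 S_{g,1},\T^n\big)\cong(\T^n)^{2g}\cong\T^{2gn}$ as well, and under the chosen basis the two identifications agree. Since the image of $\rho$ in $\T^n$ is generated by the tuple $\big(\rho(\alpha_1),\dots,\rho(\beta_g)\big)$ and does not see the ambient surface, density of $\rho$ is literally the same condition in both settings.

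Next I would show that both mapping class group actions descend to the \emph{same} $\spz$-action. For $S$ this is Proposition \ref{torga}: the Torelli group acts trivially, so the action factors through the symplectic representation $\mu:\text{Mod}(S)\to\spz$. For $S_{g,1}$ I would invoke the Birman exact sequence
\[
1\longrightarrow \pi_1(S)\longrightarrow \text{Mod}(S_{g,1})\longrightarrow \text{Mod}(S)\longrightarrow 1,
\]
and observe that the point-pushing subgroup $\pi_1(S)$ (and, if one works with a genuine boundary rather than a puncture, the boundary Dehn twist) acts trivially on $\homrep\cong\Z^{2g}$: point-pushing maps are bounding-pair maps and the boundary curve $\prod_i[\alpha_i,\beta_i]$ is null-homologous, so all these elements lie in the kernel of the symplectic representation. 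Repeating the argument of Proposition \ref{torga} verbatim — triviality on homology forces triviality on $\textsf{Hom}\big(\homrep,\T^n\big)$ via Lemma \ref{reqhr} — the $\text{Mod}(S_{g,1})$-action also factors through $\spz$, and through the same $\mu$.

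Finally, since the residual action in both cases is the precomposition action $\hrho\mapsto\hrho\circ\mu(\phi)$ of $\spz$ on $\textsf{Hom}\big(\homrep,\T^n\big)\cong\T^{2gn}$ described in Proposition \ref{orbcomp}, the two orbits of a fixed representation coincide as subsets of $\T^{2gn}$. Hence one orbit is dense if and only if the other is, and combined with the first paragraph this yields the equivalence of Theorem \ref{thmbf} (respectively Theorem \ref{thmbf2}) for the closed and the one-holed genus-$g$ surface. The main obstacle is the middle step: verifying that the enlargement $\text{Mod}(S)\rightsquigarrow\text{Mod}(S_{g,1})$ contributes nothing new to the action, i.e. that the point-pushing and boundary-twist elements act trivially on the homological representation space. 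Everything else is a matter of matching identifications.
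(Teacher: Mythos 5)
Your proposal is correct and follows the same route as the paper: identify both representation spaces with $\T^{2gn}$ via the abelian property, then check that both mapping class group actions factor through the same $\spz$-action on $\textsf{Hom}\big(\homrep,\T^n\big)$. You are in fact more careful than the paper, whose entire justification of the second step is the remark that the mapping class group coincides with the pure mapping class group for one-punctured surfaces; your explicit appeal to the Birman exact sequence and to the triviality of the point-pushing and boundary-twist elements on $\mathrm{H}_1\big(S_{g,1},\Z\big)$ supplies exactly the detail the paper leaves implicit.
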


\bibliographystyle{amsalpha}
\bibliography{biblio}
\end{document}